\definecolor{candyapplered}{rgb}{1.0, 0.03, 0.0}
\definecolor{mediumblue}{rgb}{0.0, 0.0, 0.8}
\declaretheorem[numberwithin=section]{theorem}
\declaretheorem[sibling=theorem, style=definition]{definition}
\declaretheorem[sibling=theorem]{lemma}
\declaretheorem[sibling=theorem, style=remark]{remark}
\declaretheorem[sibling=theorem]{proposition}
\declaretheorem[sibling=theorem]{corollary}
\declaretheorem[sibling=theorem, style=remark]{example}
\declaretheorem[sibling=theorem, style=remark]{problem}
\numberwithin{equation}{section}
\newcommand{\bZ}{\mathbb{Z}}
\newcommand{\bP}{\mathbb{P}}
\newcommand{\bN}{\mathbb{N}}
\newcommand{\bH}{\mathbb{H}}
\newcommand{\bR}{\mathbb{R}}
\newcommand{\bQ}{\mathbb{Q}}
\newcommand{\bC}{\mathbb{C}}
\newcommand{\cP}{\mathcal{P}}
\newcommand{\cT}{\mathcal{T}}
\newcommand{\SL}{\mathrm{SL}}
\newcommand{\PSL}{\mathrm{PSL}}
\newcommand{\cF}{\mathcal{F}}
\newcommand{\cH}{\mathcal{H}}
\newcommand{\cM}{\mathcal{M}}
\newcommand{\cS}{\mathcal{S}}
\newcommand{\cY}{\mathcal{Y}}
\newcommand{\ML}{\mathcal{ML}}
\newcommand{\PML}{\mathbb{P}\mathcal{ML}}
\newcommand{\tw}{\mathrm{tw}}
\newcommand{\X}{\mathfrak{X}}
\newcommand{\Isom}{\mathrm{Isom}}
\begin{document}

\title[Twist numbers]{Twist Numbers on Hyperbolic One-Holed Tori}
\author[Gaster]{Jonah Gaster}
\date{}

\address{Department of Mathematical Sciences, University of Wisconsin-Milwaukee}
\email{gaster@uwm.edu}

\keywords{Curves on surfaces, hyperbolic geometry, Farey graph, Markov triples}

\begin{abstract}
On a hyperbolic surface homeomorphic to a torus with a puncture,
each oriented simple geodesic inherits a well-defined relative twist number in $[0,1]$, given by the ratio to its hyperbolic length of the hyperbolic distance between the orthogonal projections of the cusp (or boundary) on its left and right, respectively.
With the Markov Uniqueness Conjecture in mind, the twist numbers for simple geodesics on the modular torus $\X$ are of particular interest.
Up to isometries of $\X$, simple geodesics are parameterized naturally by $\bQ\cap[0,1]$, and the relative twist number yields a map $\tau_\X:\bQ\cap [0,1] \to [0,1]$. 
We use hyperbolic geometry and the Farey graph to show that the graph of $\tau_\X$ is dense in $[0,1]\times [0,1]$, 
and the same conclusion holds for any complete hyperbolic structure $\cY$ on the punctured torus.
It follows that the twist number of a simple closed curve on the punctured torus does not extend continuously to the space of measured laminations.
We also include some explicit calculations of geometric quantities associated to Markov triples, and the curious fact that $\tau_\X$ is never equal to zero.
\end{abstract}

\maketitle

\begin{figure}[h!]
\centering
\captionsetup{width=.8\linewidth}
\includegraphics[width=12cm]{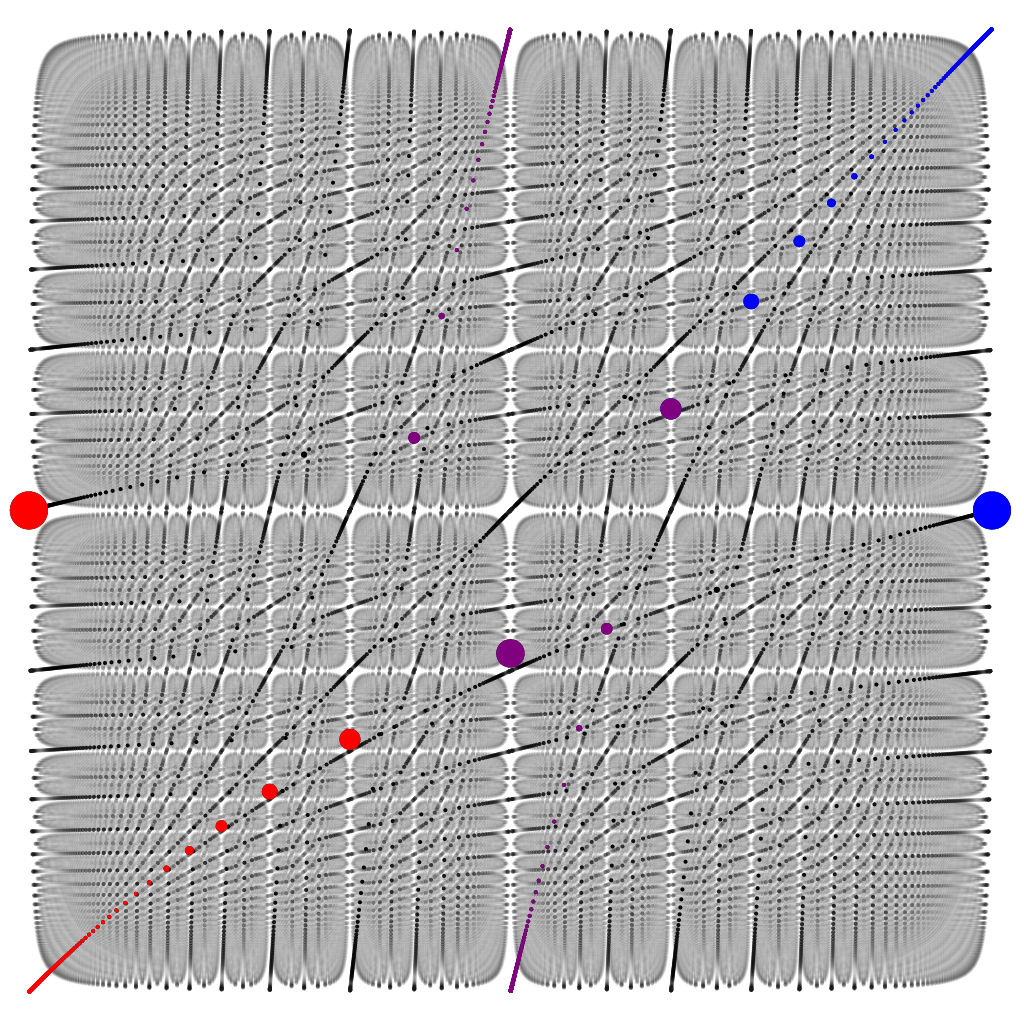}
\caption{The graph of $\tau_\X$ on points in $\bQ$ determined by the $1216589$ Markov numbers whose Farey labels have denominator $\le 2000$. 
}
\label{pic: twist graph}
\end{figure}

\section{Introduction}

The \emph{modular torus} is the hyperbolic surface $\X:=\bH^2/\Gamma$, where $\Gamma$ is the commutator subgroup of $\PSL(2,\bZ)$. 
The modular torus is finite-volume, homeomorphic to a punctured torus, with isometry group the dihedral group with twelve elements.
It has been known for many years that the modular torus is intimately connected to numbers of broad arithmetic interest
\cite{Aigner,Baragar1,Beardon-Lehner-Sheingorn,Cohn,Goldman,Haas,Lehner-Sheingorn,Matheus,McShane-new,Schmutz,Series,Sheingorn,Springborn1,Springborn2}
owing to the identification of the relative $\PSL(2,\bC)$-character variety of the punctured torus with the locus of triples $(X,Y,Z)\in \bC^3$ with
\begin{equation}
\label{eq:characters}
X^2+Y^2+Z^2=X Y Z~,
\end{equation}
where $X$, $Y$, and $Z$ represent traces of a trio of simple closed curves that pairwise intersect once.
Integer solutions to \eqref{eq:characters} represent characters of $\X$.
The \emph{Markov numbers} $\cM=\{1,2,5,\ldots\}$ are positive integers that appear in a Markov triple \cite{Markov}, i.e.~a solution $(x,y,z)\in\bN^3$ to the cubic equation
\begin{equation}
\label{eq:markov}
x^2+y^2+z^2=3xyz~.
\end{equation}

The \emph{Markov Uniquness Conjecture} (MUC), first formulated by Frobenius \cite{Frobenius}, asserts that each positive Markov triple is determined by its largest entry.
An equivalent version of MUC is the following natural assertion about the simple length spectrum of $\X$: \emph{If two simple closed geodesics on $\X$ have the same length, then they lie in a common $\Isom(\X)$-orbit.} 
In other words, there are no unexplained coincidences in the simple length spectrum of the modular torus.

Let $\cS$ indicate the collection of simple closed geodesics on $\X$.
In the above reformulation, the length of $\gamma\in \cS$ plays the role of a Markov number $m\in \cM$, which is quite natural as equations \eqref{eq:characters} and \eqref{eq:markov} indicate that the trace of a simple closed geodesic on $\X$ is exactly three times a Markov number. 
The purpose of this article is to investigate another  invariant, naturally arising from the viewpoint of the hyperbolic geometry of $\X$, associated to $\gamma\in\cS$: the \emph{relative twist number} $\tau_\X(\gamma)$.

The completion of the complement of a simple closed geodesic on a finite-volume hyperbolic punctured torus is a pair of pants $\cP_\gamma$ with two geodesic boundary components and one cusp. 
Such a hyperbolic surface with boundary is determined up to isometry by the lengths of the boundary components \cite{Farb-Margalit}. 
Therefore, if $\gamma,\eta\in\cS$ have equal length, we find an isometry $\phi:\cP_\gamma \approx \cP_\eta$.
MUC concerns the existence of an extension of $\phi$ to $\bar{\phi}:\X\to\X$, obtained by gluing $\phi$ together along the pair of geodesic boundary components of $\cP_\gamma$ and $\cP_\eta$.

This extension problem is classical in Teichm\"uller theory, as it is 
core to the construction of Fenchel-Nielsen coordinates, an identification of the Teichm\"uller space with a cell. 
The existence of $\bar{\phi}$ is equivalent to the equality $\tau_\X(\gamma)=\tau_\X(\eta)$, where
$\tau_\X(\gamma)$ is constructed as follows: 
Endow $\gamma$ with an orientation.
There are a pair of orthogeodesics from the geodesic boundary of $\cP_\gamma$ to the cusp. 
The pair of intersection points on $\gamma$ are the orthogonal projections of the cusp to $\gamma$ from its left and right, and one defines $\tau_\X(\gamma)$ to be the normalized distance from the left point to the right, along the positive orientation of $\gamma$ (see \S\ref{subsec: geom background} for detail).
It is not hard to see that $\tw_\X(\gamma)$ is independent of orientation, and we find that
\[
\Isom(\X)\cdot \gamma =\Isom(\X)\cdot \eta \ \ \iff \ \ (\ell_\X(\gamma), \tau_\X(\gamma))=(\ell_\X(\eta),\tau_\X(\eta)) ~.
\]
It is straightforward how to alter the above definition of twist number for any complete hyperbolic structure on the punctured torus, finite-volume or not; see \S\ref{sec: other structures}.

The analysis of the length spectrum of $\cS$ is facilitated by a certain function due to V.~Fock, a comparison of `arithmetic' versus `hyperbolic' length for simple geodesics on $\X$. 
We briefly describe Fock's construction:
Simple geodesics on punctured tori are naturally parameterized by lines in the integral homology, or, equivalently, by their slope, a reduced fraction in $\bQ\cup \{\infty\}$. 
When this identification is made so that the three systoles of $\X$ are $0$, $\infty$, and $-1$, a fundamental domain for the action of $\Isom(\X)$ is given by $\bQ\cap[0,1]$.
The \emph{Fock function} $\Psi: [0,1]\to \bR$ is a continuous, decreasing, convex function, whose value at $\frac pq\in[0,1]$ is given by
\begin{equation}
\label{eq:Fock}
\Psi\left( \frac pq\right) = \frac{\ell_\X\left( \frac pq \right) }{q}~,
\end{equation}
where $\ell_\X(p/q)$ is the hyperbolic length of the simple geodesic corresponding to $p/q$ on $\X$.
The convenient properties of Fock's function -- in particular, the existence of a continuous extension from $\bQ\cap [0,1]$ to $[0,1]$ -- indicate that the parameterization of $\cS$ by slopes is somehow `consistent' with the hyperbolic length function $\ell_\X:\cS\to\bR$.

With MUC in mind, one could reasonably ask about a version of Fock's function for the relative twists $\tau_\X$; roughly, this question asks about the `consistency' of the parameterization $\cS\approx \bQ\cup\{\infty\}$ with the relative twist numbers.
In this note, we will prove:

\begin{theorem}
\label{main thm}
The graph of $\tau_\X:\bQ\cap[0,1] \to [0,1]$ is dense in $[0,1]\times[0,1]$.
\end{theorem}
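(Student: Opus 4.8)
The plan is to reduce the theorem to one geometric estimate on the twist numbers of suitably ``doubly spiraling'' simple geodesics, chosen through the Farey graph so that their slope can be placed near any prescribed value while their length is split between two curves in an adjustable ratio. Fix a target $(x_0,t_0)\in[0,1]^2$ and $\varepsilon>0$; we must produce $p/q\in\bQ\cap[0,1]$ with $|p/q-x_0|<\varepsilon$ and $|\tau_\X(p/q)-t_0|<\varepsilon$. Since $\bQ$ is dense, choose a reduced fraction $a/b\in[0,1]$ with $|a/b-x_0|<\varepsilon/2$ and continued fraction expansion $a/b=[0;c_1,\dots,c_k]$, and let $\alpha,\alpha'\in\cS$ be the simple closed geodesics whose slopes are the last two convergents of $a/b$, so that $i(\alpha,\alpha')=1$ (they span a Farey edge). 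For integers $N,M\ge 1$ let $\gamma_{N,M}\in\cS$ be the geodesic of slope $[0;c_1,\dots,c_k,N,M]$. A standard continued-fraction estimate gives $\big|[0;c_1,\dots,c_k,N,M]-a/b\big|=O\!\big(1/(b^2N)\big)$ uniformly in $M$, so for $N\ge N_0(b,\varepsilon)$ the slope of $\gamma_{N,M}$ lies within $\varepsilon$ of $x_0$ for \emph{every} $M$. Thus the first coordinate is under control, and the whole matter reduces to understanding $\tau_\X(\gamma_{N,M})$.

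\emph{The geometric heart.} The continued fraction of a slope is the cutting sequence of the corresponding simple geodesic relative to the ideal triangulation dual to the Farey graph; concretely, $\gamma_{N,M}$ winds roughly $N$ times through the collar of one of $\alpha,\alpha'$ and roughly $M$ times through the collar of the other before closing up, and is represented in $\Gamma$ by a word of the form $A^{\pm N}B^{\pm M}C$ with $A,B,C$ fixed and the axes of $A,B$ projecting to $\alpha,\alpha'$. Two estimates are needed, both uniform in $N,M$: first, $\ell_\X(\gamma_{N,M})=N\,\ell_\X(\alpha)+M\,\ell_\X(\alpha')+O(1)$, which follows from collar-lemma comparisons (or a direct computation with the product $A^{\pm N}B^{\pm M}C$); second, the two orthogeodesics from the cusp to $\gamma_{N,M}$ meet $\gamma_{N,M}$ in the ``transition zones'' of the winding pattern, so that the signed arc-length along $\gamma_{N,M}$ from the left projection of the cusp to the right projection equals the length of the sub-arc carrying the entire $\alpha'$-winding, i.e.
\[
\tau_\X(\gamma_{N,M})\,\ell_\X(\gamma_{N,M})=M\,\ell_\X(\alpha')+O(1)
\]
(or its complement $N\,\ell_\X(\alpha)+O(1)$, by running the argument with left and right exchanged). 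The input is pure hyperbolic geometry in $\bH^2$: one identifies which $\Gamma$-translates of the cusp's parabolic fixed point are nearest, on each side, to the axis of $A^{\pm N}B^{\pm M}C$, and observes that increasing $N$ merely translates this axis a bounded distance past a fixed configuration of horoballs, so the $\alpha$-winding contributes either nothing or all of $N\,\ell_\X(\alpha)$ to the twist displacement, up to $O(1)$.

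\emph{Conclusion.} Combining the two estimates,
\[
\tau_\X(\gamma_{N,M})=\frac{M\,\ell_\X(\alpha')+O(1)}{N\,\ell_\X(\alpha)+M\,\ell_\X(\alpha')+O(1)}
\]
(up to replacing the right-hand side by $1$ minus it). Letting $N,M\to\infty$ with $M/N\to\rho$, the right side tends to $\rho\,\ell_\X(\alpha')/\big(\ell_\X(\alpha)+\rho\,\ell_\X(\alpha')\big)$, a continuous function of $\rho\in(0,\infty)$ attaining every value in $(0,1)$; together with the complementary expression this shows that, by first choosing $\rho$, then $N\ge N_0$ large, and finally $M$ equal to the nearest integer to $\rho N$, we obtain $\gamma_{N,M}$ with $|\mathrm{slope}(\gamma_{N,M})-x_0|<\varepsilon$ and $|\tau_\X(\gamma_{N,M})-t_0|<\varepsilon$. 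For the degenerate targets $t_0\in\{0,1\}$ one lets $\rho\to 0$ or $\rho\to\infty$, equivalently one uses the one-parameter family of high Dehn powers $T_\alpha^{\pm N}\alpha'$, whose slopes converge to that of $\alpha$ while $\tau_\X$ tends to $0$, respectively $1$. As $(x_0,t_0)$ and $\varepsilon$ were arbitrary, the graph of $\tau_\X$ is dense in $[0,1]\times[0,1]$.

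\emph{Main obstacle.} The work is concentrated in the second estimate of the geometric heart: showing that the cusp-to-geodesic orthogeodesics land in the transition zones and that the resulting twist displacement equals $M\,\ell_\X(\alpha')+O(1)$ \emph{uniformly} in $N$ and $M$. This is exactly where the Farey graph enters --- which parabolic translates are ``visible'' from the axis is dictated by the position of the slope in the Farey tessellation --- and where one must control the interaction between the horoball packing at the cusp and the long, nearly straight spiraling arcs deep in the collars of $\alpha$ and $\alpha'$. The reduction, the continued-fraction slope estimate, the length asymptotics, and the final limiting computation are all routine by comparison.
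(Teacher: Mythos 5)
Your proposal has a real gap, and it is exactly at what you identify as the ``geometric heart.'' Both the length estimate and the twist estimate fail, and the reason is the same structural error: the twist number depends on the \emph{entire} continued fraction in a genuinely non-trivial way, not just on the final two coefficients $N,M$, so no two-parameter family $\gamma_{N,M}$ can be tuned to hit an arbitrary target $t_0\in(0,1)$.

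First, a combinatorial slip: the slope $[0;c_1,\dots,c_k,N,M]$ is $\frac{(MN+1)a + Ma'}{(MN+1)b + Mb'}$, i.e.\ its homology coordinates in the $(\alpha,\alpha')$ basis are $(MN+1,M)$, not $(N,M)$. So $\gamma_{N,M}$ winds $\approx MN$ times around $\alpha$ and $\approx M$ times around $\alpha'$, and it is \emph{not} represented by a word of the form $A^{\pm N}B^{\pm M}C$ with bounded $C$. Second, even if one fixes this by taking a primitive curve with coordinates $(N,M)$, the asserted additivity $\ell_\X(\gamma_{N,M}) = N\ell_\X(\alpha)+M\ell_\X(\alpha')+O(1)$ is false. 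By Fock/McShane--Rivin, $\ell_\X = L_\X(Nb+Mb',\,Na+Ma')$ for a norm $L_\X$ on $H_1(\X,\bR)$ which is \emph{strictly} convex, so
\[
N\ell_\X(\alpha)+M\ell_\X(\alpha') - L_\X\bigl(N(b,a)+M(b',a')\bigr)\ \ge\ c(\rho)\,(N+M)
\]
with $c(\rho)>0$ whenever $M/N\to\rho\in(0,\infty)$; the gap grows linearly, not $O(1)$. (Your additivity would hold for the translation length of the \emph{non-simple} word $A^NB^M$, which is a different geodesic of strictly greater length.) Third, and decisively, the twist claim $\tw_\X(\gamma_{N,M})=M\ell_\X(\alpha')+O(1)$ is wrong: the paper's Proposition~\ref{prop: estimate geom twist} shows $\tw_\X(\gamma)=\ell_\X(\gamma_1)+O(e^{-\ell_\X(\gamma)})$ where $\gamma_1$ is an \emph{immediate Farey precedent} of $\gamma$ (a curve with Farey denominator one step up the tree), not a distant ancestor such as $\alpha'$. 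For your $\gamma_{N,M}$ the immediate precedents have slopes $[0;c_1,\dots,c_k,N]$ and $[0;c_1,\dots,c_k,N,M-1]$, so in fact $\tau_\cF(\gamma_{N,M})$ is either $\approx 1/M$ or $\approx 1-1/M$, and hence by Proposition~\ref{prop:comparing twists} the geometric twist $\tau_\X(\gamma_{N,M})$ tends to $0$ or $1$ as $M\to\infty$ --- regardless of $N$ or the ratio $M/N$. Your family therefore cannot reach interior targets $t_0\in(0,1)$.

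The paper sidesteps this by defining the arithmetic Farey twist $\tau_\cF(p/q)=b/q$ (Definition~\ref{def: Farey twist}) and observing that passing to a child in the Farey tree transforms $z=\tau_\cF$ by one of the two M\"obius maps $L(z)=z/(z+1)$, $R(z)=1/(2-z)$. Density of graph then rests on the dynamical fact (Proposition~\ref{prop: dense orbits}) that the semigroup $\langle L,R\rangle$ has dense orbits on $[0,1]$, together with the comparison Proposition~\ref{prop:comparing twists} whose proof uses the same hyperbolic trigonometry you gesture at, plus the convexity of Fock's function. The crucial point your approach misses is that one must choose an essentially arbitrary word in $L,R$ --- equivalently, an essentially arbitrary continued fraction tail, not one of the special form $(\dots,N,M)$ --- to steer the twist to a prescribed interior value; the slope can nonetheless be confined to a prescribed interval because (Lemma~\ref{lem: orders}) every Farey successor of $p/q$ stays between the immediate precedents of $p/q$.
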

\noindent That is, there is no hope for an extension of $\tau_\X$ from $\bQ\cap[0,1]$ to $[0,1]$, and $\cS\approx \bQ\cup\{\infty\}$ and $\tau_\X$ are not really very consistent at all.

This theorem sheds some light on why MUC is difficult: 
the conjecture is equivalent to the claim that each Markov number has a well-defined relative twist number. 
One might hope to gain some control on multiplicity with respect to MUC by leveraging the twist number, but the determination of $\tau_\X(p/q)$ from $p/q\in\cS$ is a real stumbling block.
As this function is quite complicated, it is unclear how to glean information about the twist numbers from the value of the corresponding Markov number alone.

For the proof of Theorem~\ref{main thm}, we relate the relative twist $\tau_\X$ to an `arithmetic' version, the \emph{Farey twist} function $\tau_\cF:\bQ\cap[0,1]\to \bQ\cap [0,1]$ that arises in consideration of the Farey graph (see Definition~\ref{def: Farey twist}). We are unaware whether consideration of $\tau_\cF$ has appeared in the literature; it is of natural geometric interest, so its study in the geometry of numbers would seem plausible. 
We provide the estimate:

\begin{proposition}
\label{prop:comparing twists}
There is a constant $C>0$ so that, for every $p/q\in\bQ\cap[0,1]$,
\[
\left | \tau_\X\left( \frac pq \right) - \tau_\cF \left( \frac pq\right) \right| < \frac C{q^2}~.
\]
\end{proposition}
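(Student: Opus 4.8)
The plan is to compute $\tau_\X(p/q)$ directly from the holonomy of $\X$ and to recognize $\tau_\cF(p/q)$ as the outcome of the same recipe carried out in the Farey tessellation rather than in the hyperbolic structure, so that the difference becomes a sum of errors along the Farey path of $p/q$ that can be estimated term by term. Conjugate the holonomy so that the cusp of $\X$ is the parabolic fixed point $\infty$, and let $W=W_{p/q}\in\Gamma$ be the hyperbolic isometry carrying $\gamma_{p/q}$, realized (as in \S\ref{subsec: geom background}) as the product $W=\mathsf g_1\cdots\mathsf g_n$ of fixed generators of $\Gamma$ read off the Farey path $0\to p/q$. The two orthogonal projections of the cusp onto $\gamma_{p/q}$ lift to the feet of the perpendiculars dropped onto $\operatorname{axis}(W)$ from two horoballs covering the cusp --- one centered at $\infty$, the other at $W^{-1}\!\cdot\infty$, the placement of the second being dictated by the pants relation for $\cP_{p/q}$. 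The signed distance along a geodesic between the feet of two perpendiculars is a logarithm of a cross-ratio of the two feet's ``sources'' with the geodesic's endpoints, so
\[
\tau_\X\!\left(\tfrac pq\right)\ \equiv\ \frac{1}{\ell_\X(p/q)}\,\log\bigl|\,[\,\infty,\ W^{-1}\!\cdot\infty\ ;\ w^-,\ w^+\,]\,\bigr|\pmod 1,
\]
with $w^\pm=\operatorname{Fix}^\mp(W)$. By its construction (Definition~\ref{def: Farey twist}), $\tau_\cF(p/q)$ is what this formula becomes when $\operatorname{axis}(W)$ and the second horoball center are replaced by the corresponding edge and vertex of the Farey tessellation and $\ell_\X(p/q)$ by the matching combinatorial length.

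Granting such a formula, there are three ingredients. (i) Fock's function is decreasing and continuous, so $\ell_\X(p/q)=q\,\Psi(p/q)$ with $\Psi(1)\le\Psi(p/q)\le\Psi(0)$ and $\Psi(1)>0$; thus $\ell_\X(p/q)\asymp q$, and after matching normalizations it suffices to bound the difference of the two un-normalized twists (the two values of $\log|[\cdots]|$) by $O(1/q)$, which upon dividing by $\ell_\X(p/q)$ gives the asserted $O(1/q^2)$. (ii) Because $W$ is hyperbolic with translation length $\ell_\X(p/q)\asymp q$, its trace $3m_{p/q}=2\cosh(\ell_\X(p/q)/2)$ and eigenvalue are exponentially large in $q$, and the contraction of $W$ near $w^+$ is $e^{-\ell_\X(p/q)}$-small; consequently the fixed points $w^\pm$, and likewise the relevant images of $\infty$ under the partial products $\mathsf g_1\cdots\mathsf g_j$, differ from the combinatorial Farey data by amounts that are exponentially small in $q$ --- in particular $o(q^{-2})$ --- and hence contribute nothing at the scale $q^{-2}$.

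The remaining, and principal, difficulty is (iii): controlling $\log|[\,\infty,W^{-1}\!\cdot\infty;w^-,w^+\,]|$ near the degenerate configurations that genuinely occur, namely when $\infty$ approaches $w^-$ or $W^{-1}\!\cdot\infty$ approaches $w^+$ --- exactly the regime where $\log|[\cdots]|\asymp\ell_\X(p/q)\asymp q$ is large. Expanding the cross-ratio as a ratio of four differences and differentiating $\log|\cdot|$, a perturbation of size $\varepsilon$ in one of the four points alters the value by $O(\varepsilon/\delta)$, where $\delta$ is the smallest gap among the four points; the decisive estimate is that these gaps stay $\gtrsim 1/q$ along the Farey path of $p/q$ --- which reflects the elementary fact that Farey neighbours $\tfrac pq,\tfrac rs$ are $\tfrac1{qs}$ apart, together with the Euclidean size of the horoballs used to define the two projections. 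With $\varepsilon=o(q^{-2})$ from (ii) and $\delta\gtrsim 1/q$, the combined error in the un-normalized twist is $O(1/q)$, and the proposition follows from (i). The delicate point throughout is to keep the amplification from the near-degenerate cross-ratio to a factor $O(q)$, and no larger, while book-keeping the word-by-word errors so they do not accumulate past $O(1/q)$; this is where the hyperbolic geometry of the cusp neighbourhood must be weighed against the arithmetic of the Farey fractions.
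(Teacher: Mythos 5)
Your proposal takes a genuinely different route from the paper: the paper first proves (Proposition~\ref{prop: estimate geom twist}) that the absolute twist $\tw_\X(\gamma_{p/q})$ equals the length $\ell_\X(\gamma_{p_1/q_1})$ of the left immediate precedent up to an error $O(e^{-\ell_\X(\gamma_{p/q})})$, by hyperbolic trigonometry in the Lambert quadrilateral of Proposition~\ref{prop: picture of lift}, and then converts $\ell_1/\ell$ to $q_1/q$ using that Fock's $\Psi$ is Lipschitz and bounded away from zero. Your plan instead tries to express $\tau_\X$ directly as a normalized logarithm of a cross-ratio and compare it term-by-term to a putative Farey analogue. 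Unfortunately the proposal has two gaps that I don't see how to close as written.

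The central gap is the claim that ``$\tau_\cF(p/q)$ is what this formula becomes when $\operatorname{axis}(W)$ and the second horoball center are replaced by the corresponding edge and vertex of the Farey tessellation.'' There is no such edge: in the curve-complex correspondence, the simple closed curve $\gamma_{p/q}$ is the \emph{vertex} $p/q$ of $\cF$, not an edge, while the axis of $W$ is a geodesic with quadratic-irrational endpoints that belongs to neither the Farey graph nor its dual tree. Meanwhile $\tau_\cF(p/q)=b/q$ is defined purely combinatorially from the denominator $b$ of the smaller immediate precedent (Definition~\ref{def: Farey twist}); you never exhibit a cross-ratio of Farey data that produces this number, nor explain which four points it would be a cross-ratio of. Until that identification is made, there are not two instances of the same formula to compare, and the perturbation argument in (iii) has no two quantities to subtract.

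The secondary gap is in (ii): the assertion that $w^\pm$ ``differ from the combinatorial Farey data by amounts that are exponentially small in $q$'' is not correct. As the explicit computation in \S\ref{sec: explicit} shows, in the standard normalization one lift $\tilde\gamma$ of $\gamma_{p/q}$ is a semicircle of Euclidean radius $L/2=\tfrac12\sqrt{9-4/n^2}=\Theta(1)$ centered at $\tfrac32+\tfrac kn$; its endpoints are nowhere near $p/q$, and the deviation of a quadratic-irrational endpoint from a nearby Farey convergent is in any case $\Theta(1/q^2)$, not $e^{-cq}$. So the input to your perturbation bound is at best $\varepsilon=\Theta(q^{-2})$, and more importantly the quantity it is supposed to be compared to is not specified. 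Finally, even granting some cross-ratio formula, you would still need the analogue of the paper's Proposition~\ref{prop: estimate geom twist}, i.e.\ an actual geometric estimate locating the projection feet; the proposal sketches a strategy but supplies no such estimate, whereas the paper's argument hinges on \eqref{eq: pythagoras} and \eqref{eq:x}. I would suggest either proving a precise cross-ratio identity for $\tau_\X$ and a matching closed form for $\tau_\cF$ before attempting the comparison, or else following the paper's route through the half-lengths $\ell,\ell_1$ and Fock's function, which isolates all the hyperbolic trigonometry in a single clean lemma.
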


Roughly speaking, this Proposition is a result of the following geometric intuition: 
There is a correspondence between Markov triples and Farey triples, and one finds that
each triple corresponds to a trio of simple geodesics on $\X$ that pairwise intersect once, with a complementary hyperbolic triangle with side lengths given by the half-lengths of the three geodesics.
As $q\to\infty$, one proceeds `down' the Farey graph, these hyperbolic triangles flatten out, and the hyperbolic lengths become roughly additive. 
The relative twist $\tau_\X(p/q)$ can then be characterized as roughly the ratio between a pair of these half-lengths, which is roughly the ratio of corresponding Farey denominators, i.e.~$\tau_\cF(p/q)$; %, see Definition~\ref{def: Farey twist}).
see \S\ref{sec: comparison} for detail.

In \S\ref{sec: Farey analysis}, we show that the Farey twist function has dense graph:

\begin{theorem}
\label{thm: prescribe arith twist}
The graph of $\tau_\cF:\bQ\cap[0,1]\to[0,1]$ is dense in $[0,1]\times[0,1]$.
\end{theorem}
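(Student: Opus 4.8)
The plan is to reduce Theorem~\ref{thm: prescribe arith twist} to a quantitative approximation statement about continued fractions, and to settle that statement by a two–stage construction: a \emph{prefix} of partial quotients will pin down the first coordinate of the target, and an appended \emph{suffix} will be free to realize the second.

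First I would record two facts about the continued fraction expansion $p/q=[0;a_1,\dots,a_n]$, with convergents $p_k/q_k$. The two Farey parents of $p/q=p_n/q_n$ are $p_{n-1}/q_{n-1}$ and $(p_n-p_{n-1})/(q_n-q_{n-1})$, the former being the smaller or the larger according as $n$ is odd or even; so, by Definition~\ref{def: Farey twist}, $\tau_\cF(p/q)$ — the ratio to $q$ of a distinguished parent's denominator — equals $q_{n-1}/q_n$ or $1-q_{n-1}/q_n$, the branch being selected by the parity of $n$. (Either branch is attainable for a given value of $p/q$, using $[0;\dots,a_n]=[0;\dots,a_n-1,1]$ or by lengthening the prefix.) \emph{Fact 1:} every real number whose expansion begins $[0;a_1,\dots,a_m]$ lies within $1/q_m^2$ of $p_m/q_m$, so a sufficiently long prefix already determines $p/q$ to within any prescribed tolerance, however the expansion is continued. \emph{Fact 2 (reversal):} $q_n/q_{n-1}=[a_n;a_{n-1},\dots,a_1]=[a_n;a_{n-1},\dots,a_{m+1},\,q_m/q_{m-1}]$; thus the quantity underlying $\tau_\cF$, namely $q_{n-1}/q_n$, is governed by the partial quotients in \emph{reverse} order, so the partial quotients one is free to append after fixing a prefix are exactly the ones controlling $\tau_\cF$.

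With these in hand I would argue as follows. Since the closure of the graph is closed and targets may be perturbed, it suffices to treat $(x_0,y_0)$ in a dense subset of $(0,1)^2$, e.g.\ $x_0\in\bQ$ and $y_0$ with $1/y_0$ a badly approximable irrational. Given $\varepsilon>0$: (i) choose a reduced $p_m/q_m$ with $|p_m/q_m-x_0|<\varepsilon/2$ and $q_m^2>2/\varepsilon$, and pad its continued fraction with one or two $1$'s — keeping it $\varepsilon/2$–close to $x_0$ and keeping $q_m/q_{m-1}<2$ — so that, in addition, the prefix length $m$ has whatever parity will later make Definition~\ref{def: Farey twist} select the branch $q_{n-1}/q_n$; call the result $[0;a_1,\dots,a_m]$. (ii) Run the continued fraction algorithm on $\rho:=1/y_0$, producing $b_1,b_2,\dots\ge 1$ whose tails $\rho^{(j)}$ are bounded (as $\rho$ is badly approximable), and set $a_n:=b_1,\ a_{n-1}:=b_2,\ \dots,\ a_{m+1}:=b_J$ where $n:=m+J$ and $J$ is large. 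Then $|p/q-x_0|<\varepsilon$ by Fact 1, while by Fact 2, $q_n/q_{n-1}=[b_1;b_2,\dots,b_J,\,q_m/q_{m-1}]$ differs from $\rho=[b_1;b_2,\dots,b_J,\rho^{(J)}]$ only through the final entry, hence by at most $|q_m/q_{m-1}-\rho^{(J)}|$ times the Lipschitz constant of $t\mapsto[b_1;\dots,b_J,t]$ on $[1,\infty)$, which is exponentially small in $J$ (at most $F_J^{-2}$ for the Fibonacci numbers $F_k$). Thus $q_{n-1}/q_n\to y_0$ as $J\to\infty$, so for $J$ large $\tau_\cF(p/q)=q_{n-1}/q_n$ lies within $\varepsilon$ of $y_0$, giving a point of the graph within $\varepsilon$ of $(x_0,y_0)$.

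The hard part is the decoupling of the two coordinates: one must commit to the prefix — hence to the approximation of $x_0$ — before having any leverage on $\tau_\cF$, so the entire burden of hitting $y_0$ falls on the partial quotients appended afterward. The reversal identity (Fact 2) is what rescues the argument: since $\tau_\cF$ is built from $q_{n-1}/q_n$, the reciprocal of a continued fraction whose \emph{leading} entries are precisely those appended partial quotients, the freedom remaining after the prefix is exactly the freedom needed. The only residual technicality is quantitative — bounding the influence of the tail $q_m/q_{m-1}$ bequeathed by the prefix — which is handled by restricting the targets $y_0$ to a dense set on which the continued fraction algorithm contracts geometrically; this restriction costs nothing for the density conclusion.
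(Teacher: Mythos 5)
Your argument is correct, and it takes a genuinely different route from the paper's. The paper introduces the semigroup $\Lambda=\langle L,R\rangle$ of M\"obius maps $L(z)=z/(z+1)$, $R(z)=1/(2-z)$ that encode how $\tau_\cF$ transforms under a step down the Farey tree, proves (Proposition~\ref{prop: dense orbits}) via a maximal-gap contraction argument that every $\Lambda$-orbit is dense in $[0,1]$, and then combines this with Lemma~\ref{lem: orders} and a Farey triple lying near $x_0$ to land a descendant near $(x_0,y_0)$. Your proof is an explicit, continued-fraction reformulation of the same underlying phenomenon: the moves $L$ and $R$ correspond to manipulating the last partial quotient, and your reversal identity $q_n/q_{n-1}=[a_n;a_{n-1},\dots,a_{m+1},q_m/q_{m-1}]$ makes visible why the appended partial quotients — which barely move $p/q$ — can steer $\tau_\cF$ freely. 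Each approach has its merits: the paper's is shorter and modular (one dynamics lemma does the heavy lifting), while yours is constructive and gives an explicit recipe for the approximating rationals. Two small points to tidy: the parity of $n=m+J$ depends on both $m$ and $J$, so you should also fix the parity of $J$ (say, $J$ even and large) before padding $m$; and the restriction to $y_0$ with $1/y_0$ badly approximable is unnecessary — the derivative of $t\mapsto[b_1;\dots,b_J,t]$ is $\pm(Q_Jt+Q_{J-1})^{-2}$, so the error term $|[b_1;\dots,b_J,q_m/q_{m-1}]-[b_1;\dots,b_J,\rho^{(J)}]|$ is $O(Q_J^{-2})$ for any irrational $\rho$, with no boundedness assumption on the tails $\rho^{(J)}$ — though since you only need a dense set of targets the restriction does no harm.
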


See Figures~\ref{pic: F twist graph} and \ref{pic: twist graph} for images of the graphs of $\tau_\X$ and $\tau_\cF$ on the $\approx 1.2$ million points $\frac pq\in\bQ\cap[0,1]$ with $q\le 2000$.

Together, Proposition~\ref{prop:comparing twists} and Theorem~\ref{thm: prescribe arith twist} immediately imply Theorem~\ref{main thm}: it suffices to demonstrate that, for any $x\notin \bQ$, the point $(x,y)\in[0,1]^2$ is an accumulation point of the graph of $\tau_\X$. 
By Theorem~\ref{thm: prescribe arith twist} there is a sequence $\left(\frac{p_n}{q_n},\tau_\cF\left(\frac{p_n}{q_n}\right)\right)\to (x,y)$. Because $x\notin\bQ$, we have $q_n\to \infty$, so Proposition~\ref{prop:comparing twists} implies that $\tau_\X\left(\frac{p_n}{q_n}\right)\to y$ as well.

As for other hyperbolic structures $\cY$ on the punctured torus, we may define \emph{mutatis mutandis} the analogous twist function $\tau_\cY:\bQ\cap[0,1]\to[0,1]$. 
Essentially the same proof goes through (see \S\ref{sec: other structures}).

\begin{theorem}
\label{thm: other points}
Let $\cY$ be a complete hyperbolic surface homeomorphic to a punctured torus. Then the graph of $\tau_\cY:\bQ\cap[0,1]\to[0,1]$ is dense in $[0,1]\times[0,1]$.
\end{theorem}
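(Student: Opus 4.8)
\textbf{Proof proposal for Theorem~\ref{thm: other points}.}

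The plan is to mirror the architecture used for the modular torus: establish a comparison between $\tau_\cY$ and the purely combinatorial Farey twist $\tau_\cF$ of the form $\left| \tau_\cY(p/q) - \tau_\cF(p/q) \right| = O(1/q^2)$, and then invoke Theorem~\ref{thm: prescribe arith twist} exactly as in the deduction of Theorem~\ref{main thm} from Proposition~\ref{prop:comparing twists}. Since Theorem~\ref{thm: prescribe arith twist} is a statement about $\tau_\cF$ alone and makes no reference to any hyperbolic structure, it is available verbatim; the only thing that needs reproving is the comparison estimate, now for a general $\cY$ in place of $\X$.

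First I would set up the relevant geometry on $\cY$. A choice of $\cY$ corresponds to a point $(X_0,Y_0,Z_0)$ on the character variety \eqref{eq:characters} with all entries real and $>2$ (the three systole-type generators), and the length function $\ell_\cY$ on $\cS \approx \bQ\cup\{\infty\}$ is governed by the same Farey-graph recursion that governs $\ell_\X$: traversing an edge of the Farey graph, the trace of the new curve is obtained from the product of the two old traces minus the third via the Markov-type move $Z \mapsto XY - Z$. Thus, as one descends the Farey graph toward a fixed irrational slope $x$, the half-lengths $\tfrac12 \ell_\cY$ attached to the three curves of a Farey triangle satisfy the same flattening phenomenon: the complementary hyperbolic triangle with these side half-lengths degenerates, and lengths become asymptotically additive. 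The key quantitative input is a bound on the defect in additivity along the descent; this follows from elementary hyperbolic trigonometry (the law of cosines for the right-angled hexagon / the pair of pants $\cP_\gamma$) together with exponential growth of the traces, and gives precisely the $1/q^2$ error in the comparison with $\tau_\cF$. One can either recapitulate the argument of \S\ref{sec: comparison} with $\X$-specific constants replaced by constants depending on $(X_0,Y_0,Z_0)$, or, more cleanly, observe that the only properties of $\X$ used there are (i) the Farey recursion for traces, (ii) exponential growth of traces down the Farey graph, and (iii) the pair-of-pants trigonometry relating $\tau$ to the ratio of adjacent half-lengths — all of which hold for any complete $\cY$, with the implied constant now depending on $\cY$.

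Finally I would assemble the pieces: given $(x,y) \in [0,1]^2$ with $x \notin \bQ$, Theorem~\ref{thm: prescribe arith twist} supplies $p_n/q_n \to x$ with $\tau_\cF(p_n/q_n) \to y$; irrationality of $x$ forces $q_n \to \infty$; and the comparison estimate then forces $\tau_\cY(p_n/q_n) \to y$. Points with $x \in \bQ$ are handled by approaching them through nearby irrational $x'$, exactly as in the $\X$ case, so density of the graph follows. The main obstacle is purely bookkeeping: verifying that the constant in the comparison estimate depends only on $\cY$ (equivalently, on a compact range of trace data once a slope window is fixed) and not on the particular slope $p/q$ — i.e., that the flattening of the complementary triangles is uniform along every descending path in the Farey graph. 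This is where a little care is needed, but no new idea beyond what already appears in \S\ref{sec: comparison}; for the cusped case the same trace recursion and the same hexagon trigonometry apply, and for a non-finite-volume $\cY$ (funnel ends) the orthogonal projection of the relevant boundary geodesic replaces that of the cusp with no change to the estimates.
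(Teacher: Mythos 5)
Your proposal takes essentially the same route as the paper: reduce the problem to the comparison estimate against $\tau_\cF$ (generalizing Proposition~\ref{prop:comparing twists} to $\cY$), then invoke Theorem~\ref{thm: prescribe arith twist}, which is indeed structure-independent. The paper's \S\ref{sec: other structures} implements exactly this by (a) replacing the quadrilateral $Q$ with a right-angled hexagon when the end is a funnel, rederiving the trigonometric identities to get $\tw_\cY(\gamma)=\ell_\cY(\gamma_1)+O(e^{-\ell_\cY(\gamma)})$ with constant depending on the boundary length, and (b) verifying that the Fock-type function $\Psi_\cY(p/q)=\ell_\cY(p/q)/q$ is continuous, positive, and convex via the McShane--Rivin extension of length to $H_1(\cY,\bR)$, so the comparison argument in \S\ref{sec: comparison} goes through verbatim. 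Your proposal is correct, but you compress item (b) into the phrase ``exponential growth of traces down the Farey graph''; that only captures positivity of $\Psi_\cY$ (boundedness away from zero), whereas the paper also explicitly needs convexity (hence Lipschitz continuity) of $\Psi_\cY$ to convert $|\Psi_\cY(p/q)-\Psi_\cY(p_1/q_1)|$ into the $1/q^2$ bound — both properties ultimately coming from strict subadditivity of the length norm on homology, which you should cite or rederive rather than fold into ``trace recursion.'' Finally, a small slip of phrasing: in the cusped case the relevant polygon is the ideal quadrilateral of \S\ref{sec:compute geom}; the hexagon is for the funnel case — you appear to have these swapped in your last paragraph, though this does not affect the correctness of the overall plan.
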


Recall that weighted simple closed geodesics $\cS\times \bR_{\ge 0}$ are dense in the space $\ML(S)$ of measured laminations on $S$, and $\cS$ is dense inside the space $\PML(S)$ of projective measured laminations \cite{Thurston}.

\begin{corollary}
\label{cor: no continuous extension}
For any hyperbolic structure $\cY$ on the punctured torus, the relative twist map from $\cS$ to $\bR$ admits no continuous extension to $\PML(S)$. Similarly, the absolute twist map 
$\overline{\tw}_\cY:\cS\times\bR_{\ge 0} \to \bR$, given by $\overline{tw}_\cY(\gamma,t)=t\cdot \tw_\cY(\gamma)$,
admits no continuous extension to $\ML(S)$. 
\end{corollary}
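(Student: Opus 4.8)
The plan is to derive Corollary~\ref{cor: no continuous extension} from Theorem~\ref{thm: other points} by a standard density-plus-oscillation argument. Suppose, for contradiction, that the relative twist map $\tau_\cY:\cS\to\bR$ extended to a continuous function $\widetilde\tau:\PML(S)\to\bR$. Restricting to the circle of slopes, $\PML(S)$ of the punctured torus is homeomorphic to the circle $\bR\cup\{\infty\}$, inside which $\cS\approx\bQ\cup\{\infty\}$ sits as a dense subset, and the subset $\bQ\cap[0,1]$ corresponds to a closed arc (a fundamental domain for $\Isom$-orbits, as recalled in the introduction). Under this homeomorphism $\widetilde\tau$ restricts to a continuous function on the arc $[0,1]$ agreeing with $\tau_\cY$ on $\bQ\cap[0,1]$. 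But a continuous function $f:[0,1]\to\bR$ has closed graph in $[0,1]\times\bR$, hence its graph cannot be dense in the compact rectangle $[0,1]\times[0,1]$ — for instance, pick $x_0\in(0,1)$ with $f(x_0)\ne 1/2$, use continuity to find a neighborhood $U$ of $x_0$ on which $|f-1/2|>\varepsilon$, and then no point of the graph lies in $U\times(1/2-\varepsilon,1/2+\varepsilon)$. This contradicts Theorem~\ref{thm: other points}, which asserts precisely that the graph of $\tau_\cY$ on $\bQ\cap[0,1]$ is dense in $[0,1]^2$. Hence no continuous extension to $\PML(S)$ exists.

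For the absolute twist map, I would argue similarly but take advantage of the scaling structure of $\ML(S)$. The cone $\cS\times\bR_{>0}$ embeds densely in $\ML(S)\setminus\{0\}$, and $\ML(S)\setminus\{0\}$ deformation retracts onto $\PML(S)$ via projectivization; concretely, $\ML(S)\cong\PML(S)\times\bR_{\ge 0}$ once one fixes a continuous positive "length" functional, e.g. $\gamma\mapsto\ell_\cY(\gamma)$, so that a weighted curve $(\gamma,t)$ has coordinates $(\,[\gamma]\,,\,t\,\ell_\cY(\gamma)\,)$. Suppose $\overline{\tw}_\cY$ extended continuously to $\Phi:\ML(S)\to\bR$. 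Fix a slope $x_0\in(0,1)\setminus\bQ$ realized as a point of $\PML(S)$ and a lamination $\lambda_0$ projecting to it; then $t\mapsto\Phi(t\lambda_0)$ is continuous on $\bR_{\ge 0}$ and vanishes at $t=0$, so $\Phi(t\lambda_0)$ is small for small $t>0$. On the other hand, choose via Theorem~\ref{thm: other points} a sequence $p_n/q_n\to x_0$ with $\tau_\cY(p_n/q_n)\to 1$; rescale each $p_n/q_n$ to weight $t_n$ so that the resulting weighted curves converge in $\ML(S)$ to $t\lambda_0$ for a small fixed $t>0$ — this is possible because weighted curves are dense and we may adjust weights continuously. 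Then continuity of $\Phi$ forces $t_n\,\tau_\cY(p_n/q_n)\to\Phi(t\lambda_0)$, while the left side tends to $t\cdot 1=t$, giving $\Phi(t\lambda_0)=t$ for \emph{every} small $t>0$ — but the same reasoning with the sequence chosen so that $\tau_\cY(p_n/q_n)\to 0$ (again available from Theorem~\ref{thm: other points}) forces $\Phi(t\lambda_0)=0$ for every small $t>0$, a contradiction once $t>0$.

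Alternatively — and perhaps more cleanly, avoiding any fuss about the topology of $\ML(S)$ near a ray — one can simply observe that the absolute statement implies the projective one: if $\overline{\tw}_\cY$ extended continuously to $\ML(S)$, then restricting to any compact set of laminations of, say, $\ell_\cY$-length $1$ (a cross-section of $\ML(S)\setminus\{0\}$ homeomorphic to $\PML(S)$, hence to a circle), and dividing by the positive continuous function $\ell_\cY$, would produce a continuous extension of the relative twist map to $\PML(S)$, which we have already excluded. I would present the projective case in full (the first paragraph above) and then deduce the $\ML(S)$ case by this normalization remark in one or two sentences, since it is the shorter and more robust route.

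The main obstacle — really the only nontrivial point — is the bookkeeping identifying the abstract density statement of Theorem~\ref{thm: other points} with density in the relevant corner of $\PML(S)$: one must be careful that the slopes $\bQ\cap[0,1]$ genuinely form a closed sub-arc of the circle $\PML(S)$ under the standard identification (this is exactly the fundamental-domain statement recalled for $\Isom(\X)$ in the introduction, and it holds verbatim for general $\cY$ since the combinatorial model is the Farey tessellation, independent of the hyperbolic structure), and that the extension $\widetilde\tau$, being continuous on the whole circle, restricts to a continuous function on that arc. Once that identification is in place the contradiction with "closed graph vs. dense graph" is immediate, so I expect the write-up to be short.
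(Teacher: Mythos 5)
Your preferred route coincides with the paper's: deduce the $\PML(S)$ statement from Theorem~\ref{thm: other points} via the homeomorphism $\cS\approx\bQ\bP^1\subset\bR\bP^1\approx\PML(S)$ (continuous functions on a compact interval cannot have dense graph), and then reduce the $\ML(S)$ statement to the $\PML(S)$ one by dividing by the positive, homogeneous length $\ell_\cY$ to produce a degree-zero (hence $\PML$-descending) extension of $\tau_\cY$. Your first, scaling-sequence argument for the $\ML(S)$ case is a correct but heavier alternative (note the limit should be $t\,\ell_\cY(\lambda_0)\cdot 1$ rather than $t$, a harmless normalization slip); since you yourself discard it in favor of the normalization remark, your write-up is in substance identical to the paper's proof.
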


\begin{proof}
Observe first that the second statement follows from the first. Indeed, if $\overline{\tw}_\cY:\ML(S)\to\bR$ is continuous, then,
because the hyperbolic length $\ell_\cY:\ML(S)\setminus\{0\}\to\bR$ is positive and homogeneous, 
one may form the well-defined map 
\[
\overline{\tau}_\cY=\frac{\overline{\tw}_\cY}{\ell_\cY}:\PML(S)\to \bR~,
\]
a continuous extension of $\tau_\cY:\cS\to \bR$.
As for the first statement, such an extension cannot exist by Theorem~\ref{thm: other points}, as the parameterization of $\cS\subset \PML(S)$ by slopes on the punctured torus provides a homeomorphism to $\bQ\bP^1\subset\bR\bP^1$ \cite{Hatcher-laminations}.
\end{proof}

\begin{remark}
\label{rem: david}
This corollary can be appreciated in light of Thurston's work on length functions. 
Thurston showed that the homogeneous extension of the length function $\ell_\cY:\cS\times\bR_{\ge0}\to \bR$, 
with $\overline{\ell}_\cY(\gamma,t)=t\cdot\ell_\cY(\gamma)$, extends continuously to $\ML(S)$ \cite[Ch.~9]{Thurston}.
In fact, `length functions' on $\cS$ are now known to extend continuously to $\ML(S)$ in great generality \cite{Bonahon, Otal,Erlandsson-Uyanik, Martinez-Granado} (but see also \cite[p.~4]{erlandsson-parlier-souto} and \cite[Prop.~11]{bonahon2} for other negative results).
Of course, there can be no continuous extension to $\PML(S)$ of the length function $\ell_\cY:\cS\to \bR$ for a hyperbolic metric $\cY$ on the punctured torus, since curves that accumulate on an irrational lamination have hyperbolic length going to infinity.
Nevertheless, Fock's function $\Psi$ 
corrects this problem: a continuous extension can be arranged by weighting against the arithmetic height of the curve.

It is tempting to imagine that one \emph{can} define an absolute twist number for a measured geodesic lamination on a hyperbolic punctured torus by integrating a `leafwise twist' against the transverse measure.
Corollary~\ref{cor: no continuous extension} indicates that such a definition has inherent challenges.
\end{remark}

\begin{remark}
\label{rem: symplectic}
Theorem~\ref{thm: other points} deserves comparison as well with the setting in which the curve $\gamma$ is fixed. Wolpert showed that, when $\{\gamma_i\}$ are the curves in a pants decomposition for $S$, the functions $\ell_\cdot(\gamma_i)$ and $\tau_\cdot(\gamma_i)$ on the Teichm\"uller space $\cT(S)$ form Darboux coordinates for $\cT(S)$, with respect to the Weil-Petersson symplectic form $\omega_\mathrm{WP}$ on $\cT(S)$ \cite{Wolpert}. It is striking that, by contrast, $\ell_\X$ and $\tau_\X$, as functions on the set of simple closed curves $\cS$, have such starkly different natures.
\end{remark}

It might be interesting to investigate what can be said about relative twist numbers in higher genus. 
However, it is not clear how the relative twist should be defined, even if one restricts to once-punctured hyperbolic surfaces $\cY$ and nonseparating curves $\gamma$: in $\cY\setminus \gamma$, projections of the cusp along simple orthogeodesics form a non-discrete set.
The punctured torus is small enough that there are precisely two such projections, so the twist numbers arise naturally.

Finally, in \S\ref{sec: explicit} we exploit the geometric work in \S\ref{sec: background} and \S\ref{sec:compute geom} to perform explicit calculations of geometric quantities associated to a Markov triple (as opposed to a single Markov number), including the relative twist. 
Namely, we show the amusing consequence that no simple geodesic on $\X$ has zero twist (see Corollary~\ref{cor: nonzero}), and that the Markov numbers obtained by twisting around the largest entry $n$ in a Markov triple $(n_1,n,n_2)$ are encoded as lifts of Weierstrass points along a lift of the simple geodesic corresponding to $n$ (see Proposition~\ref{prop: neighbors}).

\subsection{Acknowledgements} The author thanks Hugo Parlier for a conversation that contributed to the writing of this paper, and David Dumas, Caglar Uyanik, and D\'idac Martinez-Granado for helpful feedback.
Figures~\ref{pic: F twist graph} and~\ref{pic: twist graph} were made with the aid of Mathematica \cite{Mathematica}.

\bigskip
\section{The setting}
\label{sec: background}

We collect notation and some details for our analysis of the geometry of $\X$. Core to the story of the Markov numbers is the Farey graph. In this section, we identify several `labellings' of the Farey graph, which we are lead by hyperbolic geometry to interpret as maps to $\bR$ from $\cH$, a collection of horoballs 
from the maximal $\PSL(2,\bZ)$-invariant horoball packing of $\bH^2$.

\subsection{The Farey graph}
Recall that the Farey graph $\cF$ is the graph with vertex set $\bQ\cup \{\infty\}$, and where $\frac pq \sim \frac ab$ provided $|pb-qa|=1$. We adopt the convention that $\infty=1/0$, and that all other vertices of $\cF$ are written with positive denominator. 
This identifies the vertices of $\cF$ with primitive integral points in $\bZ_{\ge0}\times \bZ$.

\begin{figure}[h]
\centering
\includegraphics[width=6cm]{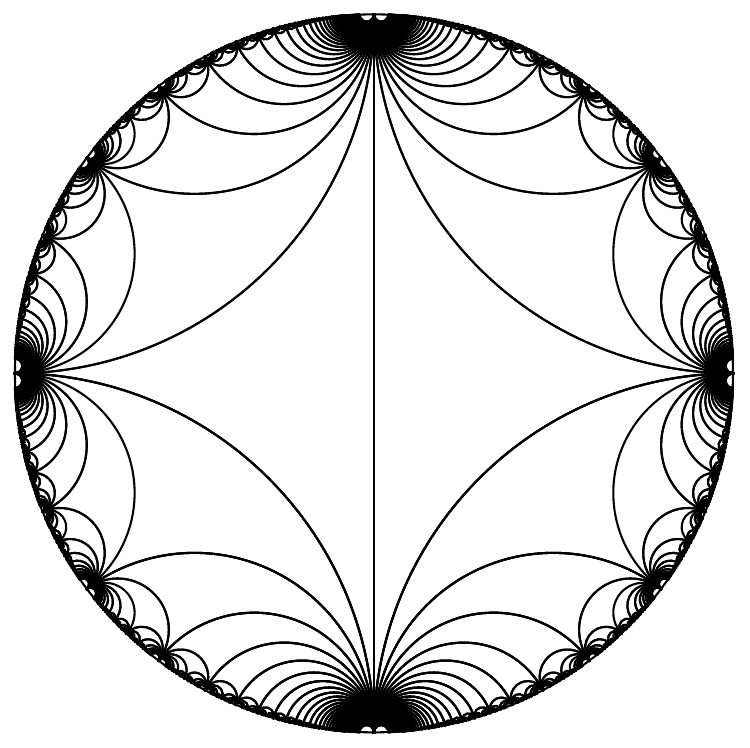}
\caption{The Farey graph embedded in $\bH^2\cup\partial_\infty\bH^2$.}
\label{pic:Farey}
\end{figure}

\begin{definition}[Farey order]
\label{def: Farey order}
The relation $\prec$ on $\cF$ is the transitive relation generated by requiring that $\frac pq \prec \frac ab$ whenever $\frac pq \sim \frac ab$ and $q\le b$.
\end{definition}
\noindent For instance, the reader can check that $\frac 13\prec \frac 25$, but $\frac 13 $ and $\frac 35$ are incomparable.
It is immediate that $\prec$ is a strict partial order with the property that intervals $\{x:x\prec y\}$ are finite.

Suppose that $p/q\in\cF$. B\'ezout's theorem implies that the neighbors of $p/q$ in $\cF$ are given by $\left\{ \frac{a+np}{b+nq} :n\in\bZ\right\}$, where $a/b$ is any Farey neighbor of $p/q$. 
It is easy to see that this implies that the clique number of $\cF$ is three, and that cliques have the convenient form $\frac ab< \frac pq < \frac cd$ with $a+c=p$ and $b+d=q$. 
We call such a trio a \emph{Farey triple} with \emph{center} $\frac pq$.
Moreover, there is a unique Farey triple $\frac ab< \frac pq < \frac cd$ with center $p/q$.
Consequently, $\frac ab$ and $\frac cd$ are the \emph{immediate precendents} of $\frac pq$, in that $\frac rs \prec \frac pq$ implies that either $\frac rs \preceq \frac ab$ or $\frac rs \preceq \frac cd$. 
When it is useful to keep track of the order induced by $\bQ\subset \bR$, we will refer to the immediate precedents as the ordered pair $\left( \frac ab, \frac cd\right)$.
Likewise, one may check that $\frac pq$ has \emph{immediate successors} $\left( \frac{a+p}{b+q} , \frac{p+c}{q+d}\right)$,
in that $\frac pq \prec \frac rs$ implies $ \frac{a+p}{b+q} \preceq \frac rs$ or $\frac{p+c}{q+d} \preceq \frac rs$.

As for comparing $\prec$ with the linear order of $\bQ\subset\bR$, it is easy to check:

\begin{lemma}
\label{lem: orders}
Suppose that $\frac pq$ has immediate precedents $\left( \frac ab,\frac cd\right)$. 
If $\frac pq \prec \frac rs$, then $\frac ab < \frac rs < \frac cd$.
\end{lemma}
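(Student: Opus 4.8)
The plan is to prove Lemma~\ref{lem: orders} by induction on the cardinality of the finite interval $I:=\{x\in\cF:\tfrac pq\preceq x\prec\tfrac rs\}$, using only the explicit description of immediate successors recalled just before the statement together with the mediant inequality of the Farey graph: if $\tfrac uv<\tfrac{u'}{v'}$ are Farey neighbors, then $\tfrac uv<\tfrac{u+u'}{v+v'}<\tfrac{u'}{v'}$. Note $\tfrac pq\in I$ always, so $I$ is nonempty, and it is finite since $\prec$-intervals are finite.

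First I would dispatch the base case $I=\{\tfrac pq\}$. In this case $\tfrac rs$ must be an immediate successor of $\tfrac pq$: the immediate successor property gives an immediate successor of $\tfrac pq$ that is $\preceq\tfrac rs$; it is $\succ\tfrac pq$, so if it were $\prec\tfrac rs$ it would lie in $I$, a contradiction, and hence it equals $\tfrac rs$. Since $\left(\tfrac ab,\tfrac cd\right)$ are the ordered immediate precedents of $\tfrac pq$ we have $\tfrac ab<\tfrac pq<\tfrac cd$, and applying the mediant inequality to the two immediate successors $\tfrac{a+p}{b+q}$ and $\tfrac{p+c}{q+d}$ gives $\tfrac ab<\tfrac{a+p}{b+q}<\tfrac pq$ and $\tfrac pq<\tfrac{p+c}{q+d}<\tfrac cd$, so $\tfrac ab<\tfrac rs<\tfrac cd$ either way.

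For the inductive step, assume $|I|\ge 2$. Then $\tfrac rs$ is not an immediate successor of $\tfrac pq$ (if it were, the argument above forces $I=\{\tfrac pq\}$), so the immediate successor property yields an immediate successor $\tfrac{r'}{s'}\in\{\tfrac{a+p}{b+q},\tfrac{p+c}{q+d}\}$ with $\tfrac{r'}{s'}\prec\tfrac rs$. A short B\'ezout computation, together with the uniqueness of the Farey triple with a prescribed center, identifies the ordered immediate precedents of $\tfrac{a+p}{b+q}$ as $\left(\tfrac ab,\tfrac pq\right)$ and those of $\tfrac{p+c}{q+d}$ as $\left(\tfrac pq,\tfrac cd\right)$. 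Because $\tfrac pq\prec\tfrac{r'}{s'}$, the interval $\{x:\tfrac{r'}{s'}\preceq x\prec\tfrac rs\}$ is a proper subset of $I$ (it omits $\tfrac pq$), so the induction hypothesis applies to the pair $\tfrac{r'}{s'}\prec\tfrac rs$: in the first case it yields $\tfrac ab<\tfrac rs<\tfrac pq<\tfrac cd$, and in the second $\tfrac ab<\tfrac pq<\tfrac rs<\tfrac cd$. Either way $\tfrac ab<\tfrac rs<\tfrac cd$, completing the induction.

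The routine calculations I am skipping are exactly the mediant inequality and the identification of the immediate precedents of a mediant, which the paper already regards as "easy to check." The main obstacle is therefore not any single computation but organizational: making the induction genuinely well-founded (handled by finiteness of $\prec$-intervals) and checking that the two cases in the inductive step really are symmetric. An alternative, more geometric route is available — the edge of the Farey tessellation of $\bH^2$ with endpoints $\tfrac ab$ and $\tfrac cd$ cuts off the ideal arc $\left(\tfrac ab,\tfrac cd\right)$ containing $\tfrac pq$, and an induction of the same shape shows every vertex $\succ\tfrac pq$ lies in that arc — but it does not save work, so I would present the combinatorial version above.
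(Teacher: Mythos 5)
Your proof is correct. The paper states Lemma~\ref{lem: orders} without proof, remarking only that it is ``easy to check,'' so there is no argument in the paper to compare against; your induction on the cardinality of the finite $\prec$-interval $I=\{x:\frac pq\preceq x\prec\frac rs\}$, combined with the mediant inequality and the identification of the immediate precedents of $\frac{a+p}{b+q}$ and $\frac{p+c}{q+d}$, is exactly the kind of routine verification the author had in mind, and it is sound.

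One small point of exposition: in the inductive step, the parenthetical assertion that ``if $\frac rs$ were an immediate successor, the argument above forces $I=\{\frac pq\}$'' is true but is not literally the contrapositive of the base case; it needs a quick appeal to the immediate-\emph{precedent} property of the mediant (anything $\prec\frac{a+p}{b+q}$ is $\preceq\frac ab$ or $\preceq\frac pq$, and comparison of denominators then pins $I$ down to $\{\frac pq\}$). You can sidestep this entirely by restructuring: the immediate successor property hands you some $\frac{r'}{s'}\in\{\frac{a+p}{b+q},\frac{p+c}{q+d}\}$ with $\frac{r'}{s'}\preceq\frac rs$; if equality holds, the mediant inequality finishes it, and otherwise $\frac{r'}{s'}\prec\frac rs$ with a strictly smaller interval, so the induction hypothesis applies. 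This avoids the case split on $|I|$ and removes the need for the parenthetical claim, but the argument as you wrote it is also correct once that claim is justified.
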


The Farey graph admits a map to the compactification of the hyperbolic plane $\bH^2\cup\partial_\infty \bH^2$ so that 
its vertices $\bQ\cup\{\infty\}$ are sent to $\partial_\infty \bH^2$ by the identity map and so that its edges are sent to complete geodesics in $\bH^2$ -- here we assume the upper-half plane model of the hyperbolic plan $\bH^2$, in which one has the identification $\partial_\infty \bH^2 = \bR\cup \{\infty\}$. 
The characterization of neighbors of $p/q$ above can be used to show that the induced map $\cF\to \bH^2\cup \partial_\infty \bH^2$ is an embedding, in which complementary components of $\cF\cap \bH^2$ are ideal triangles.
This embedding will be implicit below, so that we may refer to $\cF\subset \bH^2\cup \partial_\infty \bH^2$.

Among its various fascinating aspects, the Farey graph arises as the `curve complex' of the torus (or punctured torus), an infinite-diameter $\delta$-hyperbolic metric space, and carries an isometric transitive action of $\PSL(2,\bZ)$ by linear fractional transformations. 
The interested reader can refer to \cite{Hatcher, Schleimer, Minsky, Masur-Minsky} for more of this fascinating story, and its deep connections to and profound consequences for the geometry and topology of surfaces and three-manifolds.

\subsection{The Farey labelling}
For our purposes, it will be useful to visualize the Farey graph from the viewpoint of the dual graph $\cT$ of $\cF \cap \bH^2$, in which there is a vertex for each complementary ideal triangle, with edges corresponding to a shared side of two triangles. 
By choosing barycenters for triangles and geodesic segments for edges, we will also view $\cT$ as embedded in $\bH^2$, transverse to $\cF$ (in fact, orthogonal to $\cF$).
Each component of $\bH^2\setminus \cT$ is within a bounded distance of a horoball (making it an `approximate horoball'), and is bounded by a complete piecewise geodesic. 
It will be convenient to associate to each complementary component the maximal horoball it contains, tangent to edges of $\cT$.
The map from a complementary region to the center of the corresponding horoball in $\bQ\cup\{\infty\}$ provides a bijection between the components of $\bH^2\setminus \cT$ and the vertices of $\cF$.

Let $\cH$ indicate the set of bounded maximal horoballs in $\bH^2\setminus \cT$ that intersect the strip $\{z\in \bH^2:0< \Re(z)<1\}$. 
Restricting the above correspondence, we obtain the \emph{Farey labelling}, evidently a bijection 
\[
\lambda_\cF : \cH \to \bQ\cap [0,1]
\]
that sends the maximal horoball $H$ to its center. 
Moreover, the strict partial order $\prec$ on $\bQ\cap[0,1]$ induced by $\cF$ induces a strict partial order $\prec$ on $\cH$.
The reader may check that $H\prec K$ precisely when there is a chain of consecutively tangent horoballs $H=H_0, H_1,\ldots, H_n=K$ in $\cH$ so that the Euclidean radius of $H_{i+1}$ is no larger than that of $H_i$.

Shortly, we will provide a `Markov labelling' of $\cH$, which is best described inductively. In analogy with that construction, we point out that one can describe $\lambda_\cF$ inductively as well: if the immediate precedents of $H\in \cH$ are $H_1$ and $H_2$, $\lambda_\cF(H_1)=\frac{p_1}{q_1}$ and $\lambda_\cF(H_2)=\frac{p_2}{q_2}$, then $\lambda_\cF(H)=\frac{p_1+p_1}{q_1+q_2}$. Because intervals $\{H:H\prec K\}$ are finite, $\lambda_\cF$ is induced by fixing labels $\frac 01$ and $\frac 11$ at two initial horoballs, and using the above law to produce the rest. See Figure~\ref{pic:Farey labels} for a schematic.

\begin{figure}
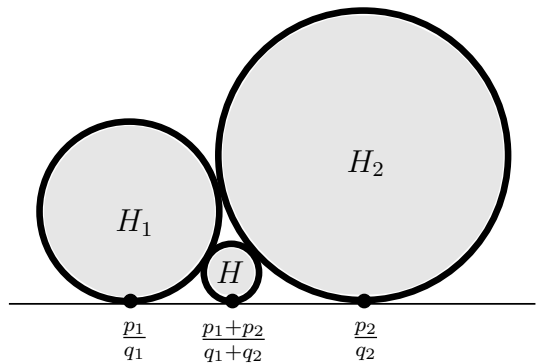

\begin{lpic}{Fareylabels(7cm)}
\large
\lbl[]{19,-5;$\frac {p_1}{q_1}$}
\lbl[]{34,-5;$\frac {p_1+p_2}{q_1+q_2}$}
\lbl[]{54,-5;$\frac{p_2}{q_2}$}
\lbl[]{19,13;$H_1$}
\lbl[]{33.5,5.5;$H$}
\lbl[]{54,22;$H_2$}
\end{lpic}
\vspace{1cm}
\caption{The inductive step in the Farey labelling of $\cH$, where $\lambda_\cF(H_1)=\frac{p_1}{q_1}$, $\lambda_\cF(H_2)= \frac{p_2}{q_2}$, and $\lambda_\cF(H)=\frac{p_1+q_1}{p_2+q_2}$. Notice that $\tau_\cF\left( \frac{p_1+p_2}{q_1+q_2}\right) = \frac{q_1}{q_1+q_2}$.}
\label{pic:Farey labels}
\end{figure}

\subsection{Farey twist}
We now define an arithmetic version of relative twist numbers for geodesics on $\X$.

\begin{definition}[Farey twist]
\label{def: Farey twist}
Suppose the immediate precedents of $\frac pq\in\bQ$ are $\left( \frac ab, \frac cd \right)$. 
The function $\tau_\cF:\bQ\cap[0,1]\to\bQ\cap[0,1]$ is given by 
\[
\tau_\cF\left(\frac pq \right) = \frac bq~.
\]
\end{definition}

For instance, because $\frac 01 < \frac1n< \frac 1{n-1}$ is a Farey triple, we find that 
$\tau_\cF\left( \frac 1n\right) = \frac 1n\to 0$ as $n\to \infty$. 
Similarly, the Farey triples $\frac{n-1}n<\frac n{n+1} < \frac 11$ demonstrate that 
$\tau_\cF\left( \frac n{n+1}\right) = \frac n{n+1}\to 1$
as $n\to\infty$.

\subsection{The Markov labelling}
The problem of enumerating $\cM$ lies at the heart of MUC.
Given a Markov triple $(x,y,z)$, one obtains another Markov triple via the \emph{Viet\'a involution}
\[
(x,z,y) \mapsto (x,3xy-z,y)~.
\]
Permuting the coordinates, there are two other Viet\'a involutions, and one can check that (aside from Markov triple $(1,1,1)$) one of these involutions will decrease the largest entry of $(x,y,z)$ while the other two will increase it.

We can interpret this graphically, as follows:
Let $\cT_0$ be a planar tree, with exactly two leaves at distance two, all of whose non-leaf vertices are trivalent. There is an embedding of $\cT_0$ in $\cT$ obtained by intersecting $\cT$ with the strip $\{z\in\bH^2: 0\le \Re(z) \le 1\}$.
Discarding the unbounded component, the remaining complementary components are again in correspondence with the horoballs $\cH$.

\begin{figure}[h]
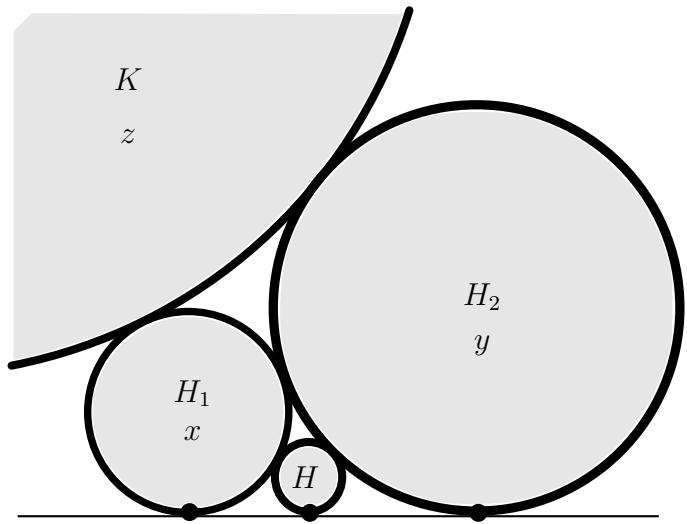

\begin{lpic}{MarkovLabels(9cm)}
\large
\lbl[]{15,55;$K$}
\lbl[]{15,48;$z$}
\lbl[]{23,16;$H_1$}
\lbl[]{23,11;$x$}
\lbl[]{59,28;$H_2$}
\lbl[]{59,22;$y$}
\lbl[]{37,5.5;$H$}
\end{lpic}
\caption{$\lambda_\cM(H)$ is obtained by applying the Viet\'a involution $\lambda_\cM(H)=3xy-z$.}
\label{pic: Markov labels}
\end{figure}

Now we initialize the Markov labelling $\lambda_\cM$. 
Put the label $1$ on each of the maximal horoballs, complementary to $\cT$, centered at $\infty$, $0$, and $-1$. 
Because intervals $\{H:H\prec K\}$ are finite, Viet\'a involutions determine inductively a function 
\[
\lambda_\cM:\cH\to \cM~,
\] 
where $\lambda_\cM(H) = 3xy-z$, provided: the immediate precedents of $H$ are $H_1$ and $H_2$, $K$ is the unique common immediate precedent of $H_1$ and $H_2$, $\lambda_\cM(H_1)=x$, $\lambda_\cM(H_2)=y$, and $\lambda_\cM(K)=z$ (see Figure~\ref{pic: Markov labels}).
Because there is a Viet\'a involution that will decrease the largest entry in a Markov triple (again, aside from Markov triple $(1,1,1)$), one can check easily that $\lambda_\cM$ is surjective. 
With this terminology, MUC is merely (!) the statement that $\lambda_\cM$ is injective.

We will refer to the Markov triple $(n_1,n,n_2)$ as \emph{ordered} if the horoballs $H_1,H,H_2\in\cH$ with the respective Markov labels $n_1,n,n_2\in\cM$ appear consecutively with respect to their centers on $\bR$, i.e.~if $\lambda_\cF(H_1)<\lambda_\cF(H)<\lambda_\cF(H_2)$. For instance, $(1,5,2)$ is ordered, whereas $(2,5,1)$ is not (see Figure~\ref{pic: markov labelling}).

\subsection{Relative geometric twist}
\label{subsec: geom background}
We now define the relative geometric twist map $\tau_\X:\bQ\cap[0,1]\to\bR$.
Recall that the commutator subgroup $\Gamma<\PSL(2,\bZ)$ is a free group with generators 
\begin{equation}
\label{eq:generators}
\alpha=\begin{pmatrix} 2& 1\\1&1\end{pmatrix} \ \ \text{and}
\ \ \beta=\begin{pmatrix} 2 & -1 \\ -1 & 1\end{pmatrix}~.
\end{equation}
Note that the three systoles of $\X$ are represented by $\alpha$, $\beta$, and $\gamma=\alpha\beta$, as pictured in Figure~\ref{pic: fund domain}.
Fix the identification $H_1(\X,\bZ)\cong \bZ^2$ so that $[\alpha]=(1,-1)$ and $[\beta]=(0,1)$; note that $[\gamma]=(1,0)$. The choice for $H_1(\X,\bZ)\cong \bZ^2$ is natural given that $(q,p)\in \bZ^2 $ determines slope $\frac pq\in\bQ\cup\{\infty\}$, and the inductive description of $\lambda_\cM$ above was given initially with the label $1$ on the horoballs centered at $\infty$, $0$, and $-1$.

\begin{figure}[h]
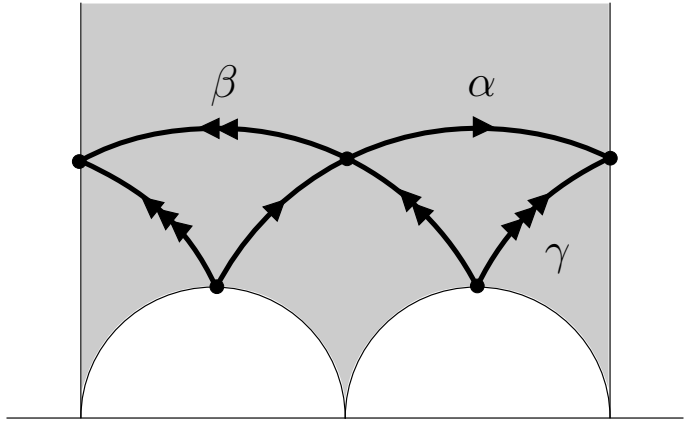

\begin{lpic}{fundDomain(9cm)}
\LARGE
\lbl[]{107,75;$\alpha$}
\lbl[]{49,75;$\beta$}
\lbl[]{124,36;$\gamma$}
\end{lpic}
\caption{Geodesic representatives for $\alpha$, $\beta$, $\gamma$ in the ideal quadrilateral with vertices $-1,0,1,\infty$, a fundamental domain for the action of $\Gamma$.}
\label{pic: fund domain}
\end{figure}

Each element $\frac pq\in\bQ\cap[0,1]$ (again, with convention that $q>0$) determines a primitive integral point $(q,p)\in  \bZ_{> 0} \times\bZ\cong H_1(\X,\bZ)$, which contains a unique oriented simple geodesic representative $\gamma_{p/q}$. 
The completion of the complement of $\gamma_{p/q}$ is a pair of pants with two geodesic boundary components, each of length $\ell_\X(\gamma_{p/q})$, and one cusp. 
The orthogonal projection of the cusp to the geodesic boundary components picks out a pair of points $A_l$ and $A_r$ on $\gamma_{p/q}$; $A_l$ is the projection of the cusp seen to the left side from $\gamma_{p/q}$, and $A_r$ is the projection of the cusp as seen to the right. 

\begin{figure}[h]
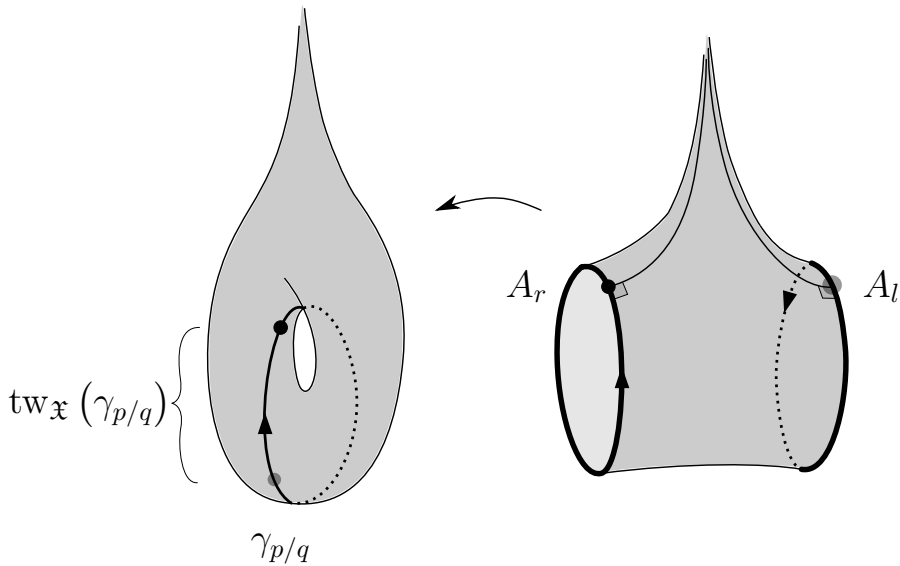

\centering
\begin{lpic}{GeomTwist(9cm)}
\Large
\lbl[]{-15,18.5;$\tw_\X\left(\gamma_{p/q}\right)$}
\lbl[]{20,-8;$\gamma_{p/q}$}
\lbl[]{65,40;$A_r$}
\lbl[]{130,40;$A_l$}
\end{lpic}
\vspace{1cm}
\caption{The relative geometric twist $\tau_\X\left(\frac pq\right)$ is the ratio of $\tw_\X(\gamma_{p/q})$ to $\ell_\X(\gamma_{p/q})$.}
\label{pic: geom twist}
\end{figure}

\begin{definition}[Geometric twist]
\label{def: geom twist}
The \emph{absolute geometric twist} $\tw_\X\left(\gamma_{p/q} \right)$ is the positive distance from $A_l$ to $A_r$ along $\gamma_{p/q}$. 
The \emph{relative geometric twist} $\tau_\X:\bQ\cap[0,1]\to[0,1]$ is given by
\[
\tau_\X\left( \frac pq \right) = \frac{ \tw_\X\left( \gamma_{p/q} \right) }{\ell_\X\left( \gamma_{p/q}\right)}~.
\]
\end{definition}

See Figure~\ref{pic: geom twist} for an illustration.

\begin{remark}
\label{rem: orientation}
There seem to be several choices made above to define $\tau_\cF$, including with the identification $H_1(\X,\bZ)\cong \bZ^2$, and with the choice of orientation for $\gamma_{p/q}$ induced by the convention that $\frac pq\in\bQ$ has $q>0$.
Note that reversing the choice of orientation of $\gamma_{p/q}$ has no affect on the absolute twist number $\tw_\X\left( \gamma_{p/q}\right)$.
In any case, it is a subjective matter whether one accepts these choices as natural; if one rejects the choice of an orientation, it might seem more natural to interpret the relative twist $\tau_\X(p/q)$ as a point of $[0,1]/\hspace{-.1cm}\sim$, where $x\sim 1-x$. 
Of course, $[0,1]/\hspace{-.15cm}\sim$ is homeomorphic to $[0,1]$, and with either this altered notion of the relative twist, or with an alternative identification $H_1(\X,\bZ)\cong \bZ^2$, the conclusion of Theorem~\ref{main thm} remains true \emph{a fortiori}.
\end{remark}

\subsection{The fractional Markov labelling}
\label{subsec: fractional Markov}
Later, we will perform explicit calculations of the geometry associated to a Markov triple (see \S\ref{sec: explicit}).
It will be helpful to introduce yet one more relevant labelling of $\cH$, the \emph{fractional Markov labelling}
$\lambda_{\cF\cM}:\cH \to \bQ\cap [0,1]$
constructed as follows: To initialize $\lambda_{\cF\cM}$, put the labels $1/1$ and $0/1$ on each of the maximal horoballs, complementary to $\cT$, centered at $\infty$ and $0$, respectively. 
Now define
\begin{equation}
\label{eq: fractional markov}
\lambda_{\cF\cM}(H) = \frac{(k_1n_1+k_2n_2)/(k_2n_1-k_1n_2)}{3n_1n_2-(k_2n_1-k_1n_2)}~,
\end{equation}
provided: the immediate precedents of $H$ are $H_1$ and $H_2$, $\lambda_{\cF\cM}(H_1)=\frac {k_1}{n_1}$, and $\lambda_{\cF\cM}(H_2)=\frac {k_2}{n_2}$. 

\begin{example}
\label{ex: kns}
We have $\lambda_{\cF\cM}(H_{1/1})=\frac 12$, $\lambda_{\cF\cM}(H_{1/2})=\frac 25$, $\lambda_{\cF\cM}(H_{1/3})=\frac 5{13}$, and $\lambda_{\cF\cM}(H_{2/3})=\frac{12}{29}$.
\end{example}

\begin{remark}
\label{rem: lambda length}
The reader may note that the inductive description of $\lambda_{\cF\cM}$ seems to rely only on the labels $\lambda_{\cF\cM}(H_1)$ and $\lambda_{\cF\cM}(H_2)$ of the immediate precedents, and not on the label $\lambda_{\cF\cM}(K)$ of the common immediate precedent $K$ of $H_1$ and $H_2$, in contrast to the Markov labelling $\lambda_\cM$.
This is because the data of $\lambda_\cM(K)$ is already contained in \eqref{eq: fractional markov}. 
In fact, the quantity $k_2n_1-k_1n_2$ appearing in \eqref{eq: fractional markov} is equal to the \emph{$\lambda$-length} of the geodesic arc in $\bH^2$ between $\frac{k_1}{n_1}$ and $\frac{k_2}{n_2}$ -- for our purposes the reader may take the definition of the $\lambda$-length of the geodesic between $\frac ab$ and $\frac pq$ to be $|pb-aq|$.
It will be a byproduct of the proof of Proposition~\ref{prop: fractional markov} that $k_2n_1-k_1n_2$ is in fact equal to $\lambda_{\cM}(K)$. 
For more about $\lambda$-length the reader can consult \cite{Fomin-Thurston,Penner}.
\end{remark}

\begin{remark}
\label{rem: elliptic}
We will make repeated use of the fact that every complete hyperbolic surface homeomorphic to a punctured torus has an \emph{elliptic isometry}, an order-two orientation-preserving isometry which fixes three points $w_1,w_2,w_3$, the \emph{Weierstrass points} of the surface 
(see \cite{Farb-Margalit}).
\end{remark}

Though both the numerator and denominator of $\lambda_{\cF\cM}$ may seem strange, they both have important geometric meaning. (See also \cite{Springborn2} for related work.)

\begin{proposition}
\label{prop: fractional markov}
Suppose that $H\in \cH$ and $\lambda_{\cF\cM}(H)=\frac kn$ with $\gcd(k,n)=1$. 
\begin{enumerate}[label = (\roman*)]
\item $\lambda_\cM(H)=n$ 
\item $k^2\equiv-1\mod n$
\item The projection of $\{\Re(z)=\frac kn\}$ to $\X$ is the simple proper geodesic arc disjoint from $\gamma_{\lambda_\cF(H)}$, \\the simple closed geodesic whose homology class is parallel to the line of slope $\lambda_\cF(H)$.
\item Suppose that $(H_1,H_2)$ are the immediate precedents of $H$, and that $\lambda_{\cF\cM}(H_i)=\frac{k_i}{n_i}$.
Then $\frac{k_1}{n_1}<\frac kn<\frac{k_2}{n_2}$, and the projections of $\{\Re(z)=\frac kn\}$, $\{\Re(z)=\frac {k_1}{n_1}\}$, and $\{\Re(z)=\frac {k_2}{n_2}\}$ form an ideal triangulation of $\X$, in which the counterclockwise order of the arcs on $\X$ is given by the projections of $\{\Re(z)=\frac {k_1}{n_1}\}$, $\{\Re(z)=\frac{k_2}{n_2}\}$, and $\{\Re(z)=\frac k n\}$, in that order.
\end{enumerate}
\end{proposition}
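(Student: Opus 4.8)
The plan is to prove statements (i)--(iv) simultaneously by induction on the strict partial order $\prec$ on $\cH$, which is well-founded with finite initial intervals. The engine of the argument is the decorated hyperbolic geometry of $\X$: equip $\X$ with the cusp decoration coming from the maximal $\PSL(2,\bZ)$-invariant horoball packing (so each lift of the cusp is one of our horoballs), and recall that an ideal triangulation of the once-punctured torus corresponds to a triangle of the Farey graph, its three arcs being exactly the arcs dual to the three curves whose slopes form the corresponding Farey triple. The key principle is that, along an edge flip between two such triangulations, Penner's Ptolemy relation for the $\lambda$-lengths of the affected arcs has exactly the shape of the Vie\'ta recursion $\lambda_\cM(H)=3\lambda_\cM(H_1)\lambda_\cM(H_2)-\lambda_\cM(K)$ (cf. Remark~\ref{rem: lambda length} and \cite{Springborn2,Penner}). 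Throughout, write $\lambda_{\cF\cM}(H_i)=\tfrac{k_i}{n_i}$ in lowest terms for the immediate precedents $(H_1,H_2)$ of $H$, let $K$ be the common immediate precedent of $H_1$ and $H_2$, and set $\mu:=k_2n_1-k_1n_2$, the $\lambda$-length of the geodesic between $\tfrac{k_1}{n_1}$ and $\tfrac{k_2}{n_2}$.

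For the base case one treats the initial horoballs directly, using the ideal quadrilateral with vertices $-1,0,1,\infty$ as a fundamental domain for $\Gamma$ (Figure~\ref{pic: fund domain}): the vertical lines $\{\Re z=0\}$ and $\{\Re z=1\}$ descend to the proper arcs dual to $\beta$ and $\gamma$, these together with one further arc cut $\X$ into two ideal triangles interchanged by the elliptic involution (Remark~\ref{rem: elliptic}), and the congruences of (ii) are trivial modulo $1$. For the inductive step I would first establish (iii) and the triangulation assertion of (iv). By the inductive hypothesis, the vertical lines at $\tfrac{k_1}{n_1}$ and $\tfrac{k_2}{n_2}$ descend to the arcs dual to $\gamma_{\lambda_\cF(H_1)}$ and $\gamma_{\lambda_\cF(H_2)}$, and by (iv) applied to $H_1$ and $H_2$ each of these arcs lies in an ideal triangulation of $\X$ together with the arc coming from $K$. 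Since $H_1,H_2$ are the immediate precedents of $H$, the Farey labels $\lambda_\cF(H_1)<\lambda_\cF(H)<\lambda_\cF(H_2)$ form a Farey triple (Definition~\ref{def: Farey order}, Lemma~\ref{lem: orders}); performing the diagonal flip that removes the arc from $K$ produces an ideal triangulation whose three slopes are precisely this Farey triple, so the new arc has slope $\lambda_\cF(H)$ and is dual to $\gamma_{\lambda_\cF(H)}$. Identifying this new arc with the projection of $\{\Re z=\tfrac kn\}$ for $\tfrac kn=\lambda_{\cF\cM}(H)$ requires tracking the deck-transformation class of the arc across the flip; the inequality $\tfrac{k_1}{n_1}<\tfrac kn<\tfrac{k_2}{n_2}$ and the stated counterclockwise cyclic order then follow by comparing $\prec$ with the order of $\bR$ (Lemma~\ref{lem: orders}) and by following the elliptic involution, which reverses each arc and swaps the two triangles (Remark~\ref{rem: elliptic}).

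It then remains to identify the denominator $3n_1n_2-\mu$ of $\lambda_{\cF\cM}(H)$ with $\lambda_\cM(H)$. Given the inductive hypothesis $n_i=\lambda_\cM(H_i)$ from (i) and the Vie\'ta recursion, this is equivalent to the identity $\mu=\lambda_\cM(K)$. I would deduce this from the Ptolemy principle above: along the inductive triangulations, the $\lambda$-length of the arc opposite $\infty$ in each triangle is a Markov number, the flip relating the triangulation attached to $H_1$ (or $H_2$) to that attached to $H$ obeys a Ptolemy relation identical in form to the Vie\'ta recursion, and matching base cases forces all these $\lambda$-lengths to equal the corresponding $\lambda_\cM$-values; in particular $\mu=\lambda\text{-length}\big(\tfrac{k_1}{n_1},\tfrac{k_2}{n_2}\big)=\lambda_\cM(K)$. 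This yields (i), and in the course of it one checks that $\mu\mid k_1n_1+k_2n_2$ so that the displayed fraction genuinely has denominator $\lambda_\cM(H)$ in lowest terms. Finally, (ii) is a direct computation: substituting $k=(k_1n_1+k_2n_2)/\mu$ and $n=3n_1n_2-\mu$ into $k^2+1$ and reducing modulo $n$, using $k_i^2\equiv-1\pmod{n_i}$ and the Markov relation $n_1^2+n_2^2+\mu^2=3n_1n_2\mu$.

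The main obstacles, I expect, are the orientation bookkeeping in (iv) -- pinning down the counterclockwise cyclic order of the three arcs, which forces one to follow the elliptic involution and the Weierstrass points carefully through every flip -- and making the $\lambda$-length/Ptolemy identification for $\mu=\lambda_\cM(K)$ fully rigorous, in particular verifying that each decorated flip reproduces the Vie\'ta recursion with the correct sign and that the fractions $\tfrac kn$ stay reduced (and well defined) at every stage of the induction.
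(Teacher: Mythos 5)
Your plan is in the main channel the same as the paper's: prove (iii) and (iv) simultaneously by induction on $\prec$, using the inductive description of $\lambda_{\cF\cM}$, the base case at the fundamental ideal quadrilateral of Figure~\ref{pic: fund domain}, and the flip $\alpha_0 \rightsquigarrow \alpha$ inside the quadrilateral bounded by the vertical lifts at $\tfrac{k_1}{n_1}$ and $\tfrac{k_2}{n_2}$. Where you diverge, and where the comparison is interesting, is in how you extract the explicit numerics and how you handle (i) and (ii).

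For (iii)/(iv), the paper does not use Ptolemy at all; it pins down the new vertical lift $\Re(z)=\tfrac kn$ by a direct $\PSL(2,\bZ)$ computation, conjugating the translation $z\mapsto z+3$ (a lift of the elliptic involution) by the involution preserving $\Re(z)=\tfrac{k_2}{n_2}$ and evaluating at $3+\tfrac{k_1}{n_1}$. Your proposed alternative -- reading off $n=\lambda_\cM(H)$ from the $\lambda$-length of the arc, then determining $k$ from the Ptolemy/$\lambda$-length relations such as $|kn_1-k_1n|=n_2$ around the new triangulation -- does work and is a legitimate substitute; it amounts to the same bookkeeping packaged differently. But as written, ``tracking the deck-transformation class of the arc across the flip'' is still too vague; to make this step complete you must either carry out the matrix computation, or make the $\lambda$-length constraint $k n_1 - k_1 n = n_2$ (together with $\gcd(k,n)=1$ and the inequality) explicit and show that it uniquely forces $k=(k_1 n_1+k_2 n_2)/\mu$.

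For (i), you and the paper are essentially aligned: the paper invokes the ``well-known'' fact that the $\lambda$-length of the arc dual to $\gamma_{\lambda_\cF(H)}$ is $\lambda_\cM(H)$ (so that the Vi\`eta recursion and the Ptolemy relation agree via the Markov equation), and explicitly records $n_0=k_2 n_1-k_1 n_2$ as a byproduct; you propose to prove this fact inductively by Ptolemy, which is the same content fleshed out.

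For (ii), you genuinely depart. The paper's argument is geometric and one line: the vertical arc $\Re(z)=\tfrac kn$ is preserved by the elliptic involution, whose lift swapping $\tfrac kn$ and $\infty$ must lie in $\PSL(2,\bZ)$, i.e.~be $\begin{pmatrix} k & b \\ n & -k\end{pmatrix}$ with $k^2+bn=-1$, forcing $n\mid k^2+1$. Your proposed direct computation also works and is actually quite clean once you spot the right identity: using the Brahmagupta--Fibonacci identity
\[
(k_1 n_1 + k_2 n_2)^2 + (k_2 n_1 - k_1 n_2)^2 = (k_1^2+k_2^2)(n_1^2+n_2^2)
\]
together with $n_1^2+n_2^2 = \mu n$ (a rearrangement of the Markov relation for $(n_1,n_2,\mu)$ with $n=3n_1n_2-\mu$), one gets $(k^2+1)\mu = (k_1^2+k_2^2)\,n$, and coprimality of $\mu$ and $n$ within a Markov triple finishes it. This also shows $\mu\mid k_1 n_1 + k_2 n_2$, answering the well-definedness concern you raise. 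So the algebraic route buys you a self-contained computation at the cost of the geometric transparency of the paper's involution argument. No genuine gap, but the proposal should be made precise at the two computational pinch points above.
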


\begin{remark}
\label{rem: prime powers}
It is immediate from this proposition that MUC holds for Markov numbers $n=p^r$ which are powers of primes, since it is well known that such numbers have at most one root of $-1$ in $\{1,\ldots,\frac{n-1}2\}$. This corollary is well known, and now has several proofs \cite{Aigner,Baragar2,Button,Lang-Tan,Schmutz,Zhang}.
\end{remark}

\begin{proof}
Observe first that each of the first two properties in fact follow from the third.
Indeed, the first calculation is built on the well-known observation that the $\lambda$-length of the simple proper geodesic arc disjoint from $\gamma_{\lambda_\cF(H)}$ is given by $\lambda_\cM(H)$, and the second is built on the observation that every such arc is preserved by the elliptic involution. 
We leave the details to the reader.

We prove the last two points simultaneously, using the inductive description of $\lambda_{\cF\cM}$. At the root of the tree, observe that one sees the labels $-1/1$, $1/1$, and $0/1$ on the horoballs centered at $-1$, $\infty$, and $0$, respectively. 
The reader may use Figure~\ref{pic: fund domain} to check that $\Re(z)=-1$, $\Re(z)=1$, and $\Re(z)=0$ project to geodesic arcs on $\X$ that are disjoint from $\alpha$, $\beta$, and $\gamma$, respectively, and so that the arcs are seen in the counterclockwise order $\Re(z)=-1$, $\Re(z)=1$, $\Re(z)=0$ on $\X$.

For the inductive step, suppose that $(H_1,H_2)$ are the immediate precedents of $H$, and that $K$ is the common immediate precedent of $H_1$ and $H_2$, as in Figure~\ref{pic: Markov labels}.
Let $\lambda_\cF(H_1)=\frac{p_1}{q_1}$, $\lambda_\cF(H_2)=\frac{p_2}{q_2}$, $\lambda_{\cF\cM}(H_1)=\frac{k_1}{n_1}$, $\lambda_{\cF\cM}(H_2)=\frac{k_2}{n_2}$, and $\lambda_{\cF\cM}(K)=\frac{k_0}{n_0}$.
By inductive hypothesis, the arcs $\Re(z)=\frac{k_i}{n_i}$ project to arcs $\alpha_i$ forming an ideal triangulation of $\X$, where $\alpha_i$ is disjoint from the corresponding simple geodesic $\gamma_{p_i/q_i}$, and $\frac{k_1}{n_1}<\frac{k_2}{n_2}$.
Moreover, we observe that $n_0\le \min\{n_1,n_2\}$, so by inductive hypothesis the arcs appear in the counterclockwise order $\alpha_1$, $\alpha_0$, $\alpha_2$ on $\X$. (Note that, since we do not know which is bigger among $n_1$ and $n_2$, we don't know whether $\frac{k_0}{n_0} < \frac{k_1}{n_1} < \frac{k_2}{n_2}$ or $\frac{k_1}{n_1}<\frac{k_2}{n_2} < \frac{k_0}{n_0}$.)

It is straightforward that there are precisely two other simple geodesic arcs on $\X$, intersecting once, that form an ideal triangulation with $\alpha_1$ and $\alpha_2$.
One of these is evidently $\alpha_0$; let $\alpha$ be the other.
We will show shortly that $\alpha$ has a lift given by $\Re(z) = \lambda_{\cF\cM}(H)$, between $\lambda_{\cF\cM}(H_1)$ and $\lambda_{\cF\cM}(H_2)$, and that the arcs appear in counterclockwise order $\alpha_1$, $\alpha_2$, $\alpha$ on $\X$. Note that this will complete the proof, as it is also apparent that $\alpha$ is disjoint from the curve homologous to $[\gamma_{p_1/q_1}]+[\gamma_{p_2/q_2}]$, i.e. to $[\gamma_{\lambda_\cF(H)}]$.

Now the two arcs $\alpha_0$ and $\alpha$ admit lifts inside an ideal quadrilateral $Q$ bounded by $\Re(z)=\frac{k_1}{n_1}$ and $\Re(z)=\frac{k_2}{n_2}$. One of these will be the geodesic in $\bH^2$ with endpoints $\frac{k_1}{n_1}$ and $\frac{k_2}{n_2}$, and the other will be a vertical geodesic $\Re(z)=\frac kn$ between $\frac{k_1}{n_1}$ and $\frac{k_2}{n_2}$. 
Because the arcs on $\X$ appear in counterclockwise order $\alpha_1$, $\alpha_0$, $\alpha_2$ around a complementary ideal triangle, it must be that the lift of $\alpha$ inside $Q$ is the vertical geodesic $\Re(z)=\frac kn$. See Figure~\ref{pic: quadAlphas}.
(Note that the $\lambda$-length of $\alpha_0$ is $k_2n_1-k_1n_2$; hence $n_0=k_2n_1-k_1n_2$.)

\begin{figure}
\begin{minipage}{.45\textwidth}
\centering
\begin{lpic}{quadAlphas(,5cm)}
\Large
\lbl[]{0,60;$\widetilde{\alpha}_1$}
\lbl[]{85,60;$\widetilde{\alpha}_2$}
\lbl[]{50,70;$\widetilde{\alpha}$}
\lbl[]{60,40;$\widetilde{\alpha}_0$}
\end{lpic}
\vspace{.5cm}
\caption{The vertical lifts of $\alpha_1$ and $\alpha_2$ form an ideal triangulation with two other arcs, one of which is vertical.}
\label{pic: quadAlphas}
\end{minipage}\hfill
\begin{minipage}{.45\textwidth}
\centering
\begin{lpic}{kns(,5cm)}
\Large
\lbl[]{12,-10;$\frac{k_1}{n_1}$}
\lbl[]{35,-10;$\frac kn$}
\lbl[]{70,-10;$\frac{k_2}{n_2}$}
\lbl[]{135,-10;$3+\frac{k_1}{n_1}$}
\end{lpic}
\vspace{.5cm}
\caption{Lifts of the arcs $\alpha_1$, $\alpha$, $\alpha_2$ in consecutive order around the puncture.}
\label{pic: kns} 
\end{minipage}
\end{figure}

The map $z\mapsto z+3$ projects to the elliptic involution of $\X$. 
Starting at $i$, traversing the maximal horoball centered at $\infty$ positively, we see that the lifts of the arcs of the triangulation have real parts $\frac{k_1}{n_1}$, $\frac kn$, $\frac{k_2}{n_2}$, $3+\frac{k_1}{n_1}$, $3+\frac kn$, $3+\frac{k_2}{n_2}$ (see Figure~\ref{pic: kns}). There is another lift of the elliptic involution that preserves the vertical arc $\Re(z)=\frac{k_2}{n_2}$, given by
\[
\begin{pmatrix}
k_2 & -\frac{1+k_2^2}{n_2} \\
n_2 & -k_2
\end{pmatrix}~.
\]
This involution takes $\Re(z)=3+\frac{k_1}{n_1}$ (a geodesic between $\infty$ and $3+\frac{k_1}{n_1}$) to a geodesic between $\frac{k_2}{n_2}$ and the finite endpoint of the geodesic $\Re(z)=\frac kn$, i.e.~to $\frac kn$. Therefore we find that
\begin{align*}
\frac kn & = \begin{pmatrix}
k_2 & -\frac{1+k_2^2}{n_2} \\
n_2 & -k_2
\end{pmatrix} \cdot \left( 3 + \frac{k_1}{n_1} \right) 
= \frac { 3k_2n_1+ k_1k_2 - \frac{1+k_2^2}{n_2} n_1}{3n_1n_2-(k_2n_1-k_1n_2)}~.
\end{align*}
Using the fact that $k_2n_1-k_1n_2=n_0$, and that $(n_0,n_1,n_2)$ is a Markov triple, the reader may check  
\[
3k_2n_1+ k_1k_2 - \frac{1+k_2^2}{n_2} n_1 = \frac{ k_1n_1+k_2n_2}{k_2n_1-k_1n_2}~,
\]
so we are done.
\end{proof}

\subsection{Comparing the labellings}
Towards Theorem~\ref{main thm}, our strategy is to estimate $\tau_\X$ by $\tau_\cF$. 
The labellings $\lambda_\cF$ and $\lambda_\cM$ are the key to passing back and forth between the hyperbolic geometry of $\X$ and the arithmetic of $\cF$. 
Note that this strategy for studying the Markov numbers is classical. It is the key tool, for instance, in Zagier's approach to counting the number of Markov numbers below a given bound \cite{Zagier} (though Zagier replaces the Farey tree $\cT_0$ with the `Euler tree', in which one only remembers the denominators of the Farey fractions).

The relationship between Farey triples and Markov triples is captured as follows:

\begin{proposition}
\label{prop: topology of triples}
Suppose that $\frac ab< \frac pq < \frac cd$ is a Farey triple. 
Then $\lambda_\cM\circ\lambda_\cF^{-1}\left( \frac ab, \frac pq, \frac cd\right)$ is an ordered Markov triple, and the
geodesics $\gamma_{a/b}$, $\gamma_{p/q}$, $\gamma_{c/d}$ form a trio of simple closed geodesics that pairwise intersect once, whose complementary components contain a pair of triangles exchanged by the elliptic isometry.
Moreover, the given geodesics appear in the counterclockwise order $\gamma_{c/d}$, $\gamma_{p/q}$, $\gamma_{a/d}$ around the boundary of either triangle on $\X$.
\end{proposition}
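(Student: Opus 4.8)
The plan is to deduce Proposition~\ref{prop: topology of triples} by combining the inductive description of $\lambda_\cM$ with the structural results of Proposition~\ref{prop: fractional markov}(iv). First I would verify the base case directly: the Farey triple $\frac{-1}1<\frac 01<\frac 11$ (and its $\Isom(\X)$-translates generating all of $\bQ\cap[0,1]$) has $\lambda_\cM\circ\lambda_\cF^{-1}=(1,1,1)$, which is trivially ordered, and the systoles $\alpha$, $\beta$, $\gamma$ pairwise intersect once with two complementary triangles exchanged by the elliptic involution; the counterclockwise order claim is read off Figure~\ref{pic: fund domain}. Here I would be careful about the indexing convention: by Definition~\ref{def: Farey order} the center $\frac pq$ of the triple has immediate precedents $(\frac ab,\frac cd)$, and the assertion "$\lambda_\cM\circ\lambda_\cF^{-1}(\frac ab,\frac pq,\frac cd)$ is ordered" is precisely the statement $\lambda_\cF(H_1)<\lambda_\cF(H)<\lambda_\cF(H_2)$ from the definition of \emph{ordered}, where $\lambda_\cM(H_1)=\lambda_\cM\circ\lambda_\cF^{-1}(a/b)$ etc.

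The inductive step is where the real content lies. Given a Farey triple $\frac ab<\frac pq<\frac cd$ that is \emph{not} the initial one, one of $\frac ab$, $\frac cd$ has strictly larger Farey denominator, say $\frac cd$; then $\frac ab\prec\frac cd$ and $\frac ab\prec\frac pq$, and the immediate precedents of $\frac pq$ together with the common immediate precedent $K$ fit the picture of Figure~\ref{pic: Markov labels}, so $\lambda_\cM(\frac pq)=3\,\lambda_\cM(\frac ab)\,\lambda_\cM(\frac cd)-\lambda_\cM(K)$ by construction. That this output is again a Markov triple is the Viet\`a involution; that it is \emph{ordered} follows because passing from the triangle $(\frac ab,K,\frac cd)$ to the triangle $(\frac ab,\frac pq,\frac cd)$ across the Farey edge $\frac ab\sim\frac cd$ keeps $\frac ab$ and $\frac cd$ as the two outer vertices and inserts $\frac pq=\frac{a+c}{b+d}$ strictly between them in $\bR$ (Lemma~\ref{lem: orders}). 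For the topological claims — that $\gamma_{a/b}$, $\gamma_{p/q}$, $\gamma_{c/d}$ pairwise intersect once and bound a pair of elliptic-exchanged triangles — I would invoke Proposition~\ref{prop: fractional markov}(iv) applied to the horoball $H$ with precedents $H_1,H_2$: the dual arcs $\alpha_1,\alpha_2,\alpha$ form an ideal triangulation of $\X$, and the curves disjoint from $\alpha_1$, $\alpha_2$, $\alpha$ are exactly $\gamma_{a/b}$, $\gamma_{c/d}$, $\gamma_{p/q}$; the dual of an ideal triangulation by three arcs of a once-punctured torus is precisely such a trio of pairwise-once-intersecting curves cutting out two triangles swapped by the elliptic involution (Remark~\ref{rem: elliptic}). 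The counterclockwise order of the $\gamma$'s around a triangle is then dual to the counterclockwise order $\alpha_1,\alpha_2,\alpha$ of the arcs given by Proposition~\ref{prop: fractional markov}(iv), which unwinds to $\gamma_{c/d},\gamma_{p/q},\gamma_{a/b}$ as claimed.

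The main obstacle I expect is bookkeeping rather than any deep difficulty: matching the three parallel conventions — the linear order on $\bR$, the Farey partial order $\prec$, and the counterclockwise cyclic order on $\X$ — and checking that the duality between an ideal triangulation and its trio of disjoint simple closed curves reverses (or preserves) orientation the way the statement asserts. In particular I would need to confirm that the clause "$\gamma_{c/d},\gamma_{p/q},\gamma_{a/d}$" (where presumably $\gamma_{a/d}$ should read $\gamma_{a/b}$) is consistent with the ordering recorded in Proposition~\ref{prop: fractional markov}(iv), i.e.\ that "the curve disjoint from arc $\alpha_i$" is a bijection that carries the counterclockwise arc-order to the \emph{reverse} counterclockwise curve-order around a complementary triangle, and to pin this down once and for all in the base case using Figure~\ref{pic: fund domain}. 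Everything else is a routine induction feeding on results already established.
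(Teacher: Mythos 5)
Your approach routes the entire topological content through Proposition~\ref{prop: fractional markov}(iv), dualizing from the ideal arc triangulation to the curve trio. The paper instead argues directly with the curves and never invokes Proposition~\ref{prop: fractional markov} here: pairwise intersection number one is immediate from the Farey relation; the three intersection points must be the three Weierstrass points, since each simple geodesic on $\X$ passes through exactly two of them; and an elementary Euler-characteristic/topology argument shows that three simple curves on a one-holed torus meeting pairwise in three distinct points cut out two triangles (swapped by the elliptic involution) and one hexagon around the cusp. For the cyclic order, the paper checks the base case in Figure~\ref{pic: fund domain} and then observes that each step down $\cT$ --- whether building $\lambda_\cF$ by transvections or $\lambda_\cM$ by Viet\'a involutions --- is induced by a Dehn twist, which is orientation-preserving and therefore propagates the order-reversal seen at the base. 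This Dehn-twist observation is the crux of the paper's induction and is absent from your proposal; it is what lets the paper avoid any appeal to a duality between arcs and curves.

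Your route can be made to work, but there is a real gap that you yourself flag: you need to establish, once, that the bijection sending each arc of an ideal triangulation to the unique simple closed curve disjoint from it carries the counterclockwise cyclic order of arcs around the cusp to the asserted cyclic order of curves around a complementary curve-triangle. This is not a formal consequence of anything stated, and since Proposition~\ref{prop: fractional markov}(iv) orders the arcs around the \emph{cusp} while the present statement orders the curves around a \emph{Weierstrass triangle}, it is not even the same piece of the surface being traversed; the comparison genuinely requires either a model computation or an argument of the Dehn-twist-propagation type. A secondary issue: your proposed base case, the triple $\tfrac{-1}{1}<\tfrac 01<\tfrac 11$, is not a Farey clique (since $|(-1)\cdot 1 - 1\cdot 1|=2$); the initializing trio is $0,\infty,-1$ with the systoles $\alpha,\beta,\gamma$, and it is that trio, read off Figure~\ref{pic: fund domain}, that anchors both labellings and the orientation comparison. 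The "ordered Markov triple" part of your proposal is fine and matches the paper --- it amounts to the Viet\'a recursion plus the observation that $\lambda_\cF$ and $\lambda_{\cF\cM}$ respect the linear order, which is already Proposition~\ref{prop: orders}.
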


\begin{figure}
\begin{minipage}{.3\textwidth}
\centering
\captionsetup{width=.8\linewidth}
\begin{lpic}{labelling(4.5cm)}
\Large
\lbl[]{13,42;$\frac 01$}
\lbl[]{57,42;$\frac 11$}
\lbl[]{37,22;$\frac 12$}
\lbl[]{10,12;$\frac 13$}
\lbl[]{63,12;$\frac 23$}
\lbl[]{22,-2;$\frac 25$}
\lbl[]{52,-2;$\frac 35$}
\end{lpic}
\vspace{.5cm}
\caption{The Farey labelling $\lambda_\cF$.}
\label{pic: farey labelling}
\end{minipage}\hfill
\begin{minipage}{.34\textwidth}
\centering
\captionsetup{width=.8\linewidth}
\begin{lpic}{labelling(4.5cm)}
\large
\lbl[]{13,42;$1$}
\lbl[]{57,42;$2$}
\lbl[]{37,22;$5$}
\lbl[]{10,13;$13$}
\lbl[]{63,13;$29$}
\lbl[]{21,-2;$194$}
\lbl[]{53,-2;$433$}
\end{lpic}
\vspace{.5cm}
\caption{The Markov labelling $\lambda_\cM$.}
\label{pic: markov labelling}
\end{minipage}\hfill
\begin{minipage}{.34\textwidth}
\centering
\captionsetup{width=.8\linewidth}
\begin{lpic}{labelling(4.5cm)}
\large
\lbl[]{13,42;$\frac 01$}
\lbl[]{57,42;$\frac 12$}
\lbl[]{37,22;$\frac 25$}
\lbl[]{10,13;$\frac 5{13}$}
\lbl[]{63,13;$\frac{12}{29}$}
\lbl[]{21,-2;$\frac{75}{194}$}
\lbl[]{53,-2;$\frac{179}{433}$}
\end{lpic}
\vspace{.5cm}
\caption{The fractional Markov labelling $\lambda_{\cF\cM}$.}
\label{pic: fractional markov labelling}
\end{minipage}
\end{figure}

\begin{remark}
\label{rem: orientation switch}
The reader should notice the switch in orientation from the cyclic order $\frac ab, \frac pq, \frac cd$ to the cyclic order $\gamma_{c/d}$, $\gamma_{p/q}$, $\gamma_{a/b}$ around the boundary of the complementary triangles. 
We emphasize: 
the ordered Markov triple $(n_1,n,n_2)$ gives rise to oriented simple geodesics $\gamma_1$, $\gamma$, $\gamma_2$ so that each complementary triangle sees the curves in the (counterclockwise) cyclic order $\gamma_2$, $\gamma$, $\gamma_1$.
\end{remark}

The proof of this proposition amounts to bookkeeping: First, observe that the geometric intersection numbers among the three curves are equal to one by assumption. Because each simple geodesic on $\X$ passes through two of the three Weierstrass points, the intersection points must be the three Weierstrass points. One can check that a trio of simple curves  on a punctured torus, pairwise intersecting in three distinct points, must have complementary regions which are a pair of triangles, exchanged by the elliptic involution, and one hexagonal region containing the cusp.

As for the cyclic order on the triangular sides, observe that the initial labelling used to construct $\lambda_\cF$ and $\lambda_\cM$ started with the triples $ \frac 01, \frac 10, \frac 1{-1}$ and corresponding geodesics represented by $[\alpha]=\gamma_{0/1}$, $[\beta\alpha]=\gamma_{1/0}$, and $[\beta]=\gamma_{1/-1}$, as pictured in Figure~\ref{pic: fund domain}. 
As can be observed in Figure~\ref{pic: fund domain}, either complementary triangle to this trio produces the cyclic order $\gamma_{0/1}$, $\gamma_{1/-1}$, $\gamma_{1/0}$, evidently the reverse of the cyclic order $ \frac 01, \frac 10, \frac 1{-1}$ in the Farey graph.
To fill in the labels for either $\lambda_\cF$ or $\lambda_\cM$ on $\cH$, one applies the inductive procedure along $\cT$. In either case, this procedure is obtained by applying a Dehn twist (either in the guise of a Viet\'a involution, which is the map induced on traces by a Dehn twist, or by a transvection on $\bQ$, the action of a Dehn twist in $H_1(\X,\bZ)$), which preserves the order reversal observed at the base.

It also deserves mentioning that there is a certain consistency between the labellings $\lambda_\cF$ and $\lambda_\cM$. Namely, if $(H_1,H_2)$ are the immediate precedents of $H$, then one can check by induction that $\lambda_\cF(H_1) <\lambda_\cF(H)<\lambda_\cF(H_2)$ and $\lambda_{\cF\cM}(H_1) <\lambda_{\cF\cM}(H)<\lambda_{\cF\cM}(H_2)$. Hence,

\begin{proposition}
\label{prop: orders}
For all $H,K\in \cH$, $\lambda_\cF(H) < \lambda_\cF(K)$ if and only if $\lambda_{\cF\cM}(H) < \lambda_{\cF\cM}(K)$.
\end{proposition}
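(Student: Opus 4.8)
The plan is to transport the statement from $\cH$ to $\bQ\cap[0,1]$ via the Farey labelling. Since $\lambda_\cF:\cH\to\bQ\cap[0,1]$ is a bijection, Proposition~\ref{prop: orders} is equivalent to the assertion that
\[
f\;:=\;\lambda_{\cF\cM}\circ\lambda_\cF^{-1}\ :\ \bQ\cap[0,1]\longrightarrow \bQ\cap[0,1]
\]
is strictly increasing: if $f$ is strictly increasing, then $\lambda_\cF(H)<\lambda_\cF(K)$ gives $\lambda_{\cF\cM}(H)=f(\lambda_\cF(H))<f(\lambda_\cF(K))=\lambda_{\cF\cM}(K)$, and the converse implication follows formally because $\lambda_\cF$ is a bijection and the forward implication applies also to the pair $(K,H)$. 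So it suffices to prove $f$ strictly increasing.

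The key input is Proposition~\ref{prop: fractional markov}(iv), read through $\lambda_\cF$. If $\frac ab<\frac cd$ are Farey neighbors, then by the discussion of cliques in \S\ref{sec: background} their mediant $\frac{a+c}{b+d}$ is the center of the Farey triple $\frac ab<\frac{a+c}{b+d}<\frac cd$, so $\left(\frac ab,\frac cd\right)$ are exactly the immediate precedents of $\lambda_\cF^{-1}\!\left(\frac{a+c}{b+d}\right)$; part (iv) then asserts precisely that $f\!\left(\frac{a+c}{b+d}\right)$ lies strictly between $f\!\left(\frac ab\right)$ and $f\!\left(\frac cd\right)$. Moreover, reading off the initialization of the two labellings (compare Figures~\ref{pic: farey labelling} and~\ref{pic: fractional markov labelling}) gives $f\!\left(\frac01\right)=\frac01$ and $f\!\left(\frac11\right)=\frac12$, so in particular $f\!\left(\frac01\right)<f\!\left(\frac11\right)$.

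With these two facts, $f$ is strictly increasing by a standard Stern--Brocot induction. For $n\ge 0$ let $\frac01=x_0^{(n)}<x_1^{(n)}<\cdots<x_{m_n}^{(n)}=\frac11$ be the fractions in $[0,1]$ reachable from $\left\{\frac01,\frac11\right\}$ in at most $n$ steps of the immediate-successor operation; consecutive entries are Farey neighbors, and the fractions reachable in exactly $n{+}1$ steps are the mediants of consecutive pairs. I claim by induction on $n$ that $f$ is increasing along $x_0^{(n)},\dots,x_{m_n}^{(n)}$. The case $n=0$ is $f\!\left(\frac01\right)<f\!\left(\frac11\right)$. For the step, passing to level $n{+}1$ inserts, between each consecutive pair $x_i^{(n)}<x_{i+1}^{(n)}$, its mediant $y_i$, and $f(y_i)$ lies strictly between $f\!\left(x_i^{(n)}\right)$ and $f\!\left(x_{i+1}^{(n)}\right)$ by the previous paragraph; this keeps the list of $f$-values strictly increasing. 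Since every rational in $[0,1]$ occurs at some finite level, $f$ is strictly increasing on $\bQ\cap[0,1]$, completing the proof (and incidentally showing $\lambda_{\cF\cM}$ is injective).

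Granting the results of \S\ref{sec: background}, this is a short formal argument, so there is no serious obstacle; the one point deserving care is that $\prec$ does \emph{not} refine the linear order on $\bQ$ (e.g.\ $\frac01$ and $\frac11$ are both precedents of $\frac12$), so one cannot simply "read off" the order from the tree $\cT$ — the level-by-level Stern--Brocot induction above is precisely what packages the betweenness correctly, and the only thing to verify is the routine claim that consecutive level-$\le n$ vertices are Farey neighbors whose mediant is the unique level-$(n{+}1)$ vertex between them. If one wishes to avoid quoting part (iv) of Proposition~\ref{prop: fractional markov}, it would suffice to prove the mediant-betweenness for $\lambda_{\cF\cM}$ directly from \eqref{eq: fractional markov}: with $\lambda_{\cF\cM}(H_i)=\frac{k_i}{n_i}$ and $n_0=k_2n_1-k_1n_2$, one checks $\frac{k_1}{n_1}<\frac{(k_1n_1+k_2n_2)/n_0}{3n_1n_2-n_0}<\frac{k_2}{n_2}$, a positivity computation using the coprimality of each $k_i,n_i$ and the Markov relation for $(n_0,n_1,n_2)$; this is the one place any genuine calculation enters, and is the modest obstacle in a self-contained version.
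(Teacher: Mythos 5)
Your proof is correct and follows essentially the same route as the paper: the paper's argument also rests on the betweenness property (local monotonicity at immediate precedents, which is exactly Proposition~\ref{prop: fractional markov}(iv) for $\lambda_{\cF\cM}$) and then concludes by an induction along the tree $\cT$, which the paper compresses into the phrase ``one can check by induction\dots Hence.'' Your contribution is to make that compressed step fully explicit via the level-by-level Stern--Brocot argument, which correctly addresses the point (as you note) that $\prec$ does not refine the linear order on $\bQ$, so the globalization is not literally immediate from the local betweenness.
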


Note that $\lambda_\cF$ is evidently a bijection, so by the Proposition $\lambda_{\cF\cM}$ is injective as well. This should be contrasted with the deep mystery of MUC, which predicts that $\lambda_\cM$ is injective.

\bigskip
\section{The graph of the Farey twist function}
\label{sec: Farey analysis}
Towards Theorem~\ref{thm: prescribe arith twist}, we seek to understand the graph of $\tau_\cF:\bQ\cap[0,1]\to\bQ\cap[0,1]$. 
Observe that, if $\frac pq$ has immediate precedents $\left( \frac ab, \frac cd\right)$, then the immediate successors of $\frac pq$ are $\left( \frac {a+p}{b+q},\frac{p+c}{q+d}\right)$, whose immediate precedents are in turn $\left( \frac ab, \frac pq\right)$ and $\left( \frac pq, \frac cd\right)$, respectively.

We find
\[
\tau_\cF \left( \frac pq \right) = \frac b q~, \ \ 
\tau_\cF \left( \frac{a+p}{b+q} \right) = \frac b{b+q}~, \ \ \text{and}  \ \ 
\tau_\cF \left( \frac{p+c}{q+d} \right) = \frac q{q+d}~.
\]
Letting $z=\tau_\cF(p/q)$, it is immediate to verify that
\[
\tau_\cF \left( \frac{a+p}{b+q} \right) = \frac z{z+1}
\ \ \text{and} \ \ 
\tau_\cF \left( \frac{p+c}{q+d} \right) = \frac 1{2-z}~.
\]
Let us therefore define 
$L(z)=\frac z{z+1}$ and $R(z)=\frac1{2-z}$.
The above calculation tells us how the Farey twist numbers change as we move down the Farey tree: the Farey twist number $z$ gives rise to Farey twist number $L(z)$ immediately to the left, and Farey twist number $R(z)$ immediately to the right.

Let $\Lambda$ be the semigroup of linear fractional transformations $[0,1]\to[0,1]$ generated by $L$ and $R$. Note that $L([0,1])=[0,\frac12]$ and $R([0,1])=[\frac12,1]$. Moreover, both $L$ and $R$ are \emph{distance-decreasing} on $[0,1]$, in that $|L(I)|<|I|$ and $|R(I)|<|I|$ for every interval $I\subset [0,1]$: both $L'(z) = \frac1{(z+1)^2}$ and $R'(z) = \frac1{(2-z)^2}$ are strictly less than 1 on $(0,1)$, so
\[
|f(I)| = \int_I |f'(z)| \; dz < |I|
\]
for $f$ equal to either $L$ or $R$.

\begin{proposition}
\label{prop: dense orbits}
For every $x\in[0,1]$, the orbit $\Lambda\cdot x\subset [0,1]$ is dense.
\end{proposition}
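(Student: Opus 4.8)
The plan is to show that the semigroup $\Lambda$ generated by $L(z)=\frac{z}{z+1}$ and $R(z)=\frac{1}{2-z}$ has the property that, for any target interval $J\subset[0,1]$ and any $x\in[0,1]$, some word $w\in\Lambda$ satisfies $w(x)\in J$. Since $L$ maps $[0,1]$ onto $[0,\tfrac12]$ and $R$ maps $[0,1]$ onto $[\tfrac12,1]$, a length-$n$ word $w$ in $L,R$ sends $[0,1]$ onto a subinterval $I_w\subset[0,1]$, and the images $\{I_w:|w|=n\}$ tile $[0,1]$ into $2^n$ subintervals glued at their endpoints (this is just the combinatorics of the binary subdivision $[0,1]=[0,\tfrac12]\cup[\tfrac12,1]$ iterated, using $L([0,\tfrac12])$ etc.). The key quantitative input, already isolated in the excerpt, is that $L$ and $R$ are distance-decreasing: $L'$ and $R'$ are bounded by $\frac1{(z+1)^2}$ and $\frac1{(2-z)^2}$ on $[0,1]$.

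First I would record a \emph{uniform} contraction estimate. On $[0,1]$ one has $L'(z)\le 1$ with equality only at $z=0$, and $R'(z)\le 1$ with equality only at $z=1$; the problem is that the contraction degenerates at these fixed points. To get around this, observe that the \emph{composition} $L\circ R$ (or $R\circ L$) has derivative bounded away from $1$ uniformly on all of $[0,1]$: $(L\circ R)(z)=\frac{R(z)}{R(z)+1}=\frac{1}{3-2z}$, so $(L\circ R)'(z)=\frac{2}{(3-2z)^2}$, which is at most $2$... so instead I would use $R\circ L$, $(R\circ L)(z)=\frac{1}{2-L(z)}=\frac{z+1}{z+2}$, with $(R\circ L)'(z)=\frac{1}{(z+2)^2}\le\frac14$ on $[0,1]$. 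Hence any word $w$ of length $n$ that is not a pure power of $L$ and not a pure power of $R$ — equivalently, any word containing the subword $RL$ — contracts by a definite factor: such a word can be grouped to expose at least $\lfloor (\text{number of }RL\text{-occurrences})\rfloor$ applications of the uniformly-contracting block, giving $|I_w|\le (1/4)^{k}$ where $k$ counts disjoint $RL$-blocks inside $w$.

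Next I would combine this with the tiling structure. Given any subinterval $J\subset[0,1]$ of length $\delta>0$, I claim some $I_w$ with $|w|=n$ lies inside $J$ for $n$ large. The $2^n$ tiles $\{I_w:|w|=n\}$ partition $[0,1]$; all but two of them — the one for $w=L^n$ and the one for $w=R^n$ — correspond to words containing $RL$, hence (with enough length) have length $\le(1/4)^{k(w)}$. The tiles near $0$ accumulate at $0$ (images under $L^j$ of shrinking sets) and those near $1$ at $1$, so for $n$ large every tile meeting the compact set $[\delta, 1-\delta]$, say, has length $<\delta/2$; since the tiles cover $[0,1]$ and are glued at endpoints, at least one of them is contained in $J$. (More carefully: take $n$ large enough that the tile $I_{L^n}$ has length $<\delta/3$ and $I_{R^n}$ has length $<\delta/3$ — possible since $L^n(1)=\frac1{n+1}\to 0$ and similarly at $1$ — and large enough that every other tile, being of the form $I_w$ with $w$ containing $RL$ at least $\sim n/$const times after grouping, has length $<\delta/3$; then among the tiles covering the middle third of $J$, one lies entirely inside $J$.) For that $w$ we get $w(x)\in w([0,1])=I_w\subset J$ for every $x\in[0,1]$, which is exactly density of $\Lambda\cdot x$.

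The main obstacle is the non-uniformity of the contraction near the fixed points $z=0$ of $L$ and $z=1$ of $R$: a naive "every generator shrinks intervals" argument does not give that the tiles $I_w$ shrink uniformly, because $I_{L^n}$ and $I_{R^n}$ shrink only polynomially ($\sim 1/n$) while one needs to know \emph{all} tiles eventually have small length. The fix above — passing to the composite block $R\circ L$ with uniformly bounded derivative $\le\tfrac14$, and treating the two "exceptional" pure-power tiles $I_{L^n}, I_{R^n}$ separately (they shrink, just slowly, which is all that's needed since they sit at the two ends) — is the crux, and once it is in place the rest is the elementary covering/tiling argument. One should double-check the bookkeeping that a word of length $n$ \emph{not} equal to $L^n$ or $R^n$ contains enough disjoint copies of $RL$ to force its tile below any prescribed threshold as $n\to\infty$; writing $w=R^{a_0}L^{b_1}R^{a_1}L^{b_2}\cdots$ this amounts to noting that the number of $R$-to-$L$ transitions is at least $1$ and that inserting the contracting block across each transition, together with the trivial bound $|L(I)|\le|I|$, $|R(I)|\le|I|$ on the remaining letters, gives $|I_w|\le(1/4)^{(\#\text{transitions})}$ — but to beat a \emph{fixed} $\delta$ for \emph{all} such $w$ of length $n$ one instead argues: if $w$ has few $R\to L$ transitions then it is "mostly" a block $R^a L^b$, whose image is a small neighborhood of $R^a L^b(\{0,1\})$ and hence short once $a$ or $b$ is large; a short case analysis on $\min(a_0+\cdots, \text{transitions})$ closes this. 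I would present this last point as a lemma ("for every $\delta>0$ there is $n$ with $|I_w|<\delta$ for all $|w|=n$") and then deduce the Proposition in one line.
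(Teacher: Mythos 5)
Your approach is genuinely different from the paper's, and the overall strategy can be made to work, but there are a couple of issues worth flagging before comparing the two proofs.

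First, a computational slip: $(L\circ R)(z)=\dfrac{R(z)}{R(z)+1}=\dfrac{1/(2-z)}{(3-z)/(2-z)}=\dfrac{1}{3-z}$, not $\dfrac{1}{3-2z}$, so $(L\circ R)'(z)=\dfrac{1}{(3-z)^2}\le\dfrac14$ on $[0,1]$. Thus \emph{both} of $L\circ R$ and $R\circ L$ are uniform $\tfrac14$-contractions; there was no need to discard $L\circ R$. This actually helps you: any adjacent pair of distinct letters contracts by $\tfrac14$, so the only length-$n$ words with \emph{no} contracting block are $L^n$ and $R^n$ --- which is the count of ``two'' you wanted. (If you insist on counting only $R\to L$ transitions, then you have $n+1$ bad words $L^aR^b$, not two.)

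Second, the core lemma --- for every $\delta>0$ there is $n$ so that $|I_w|<\delta$ for every word $w$ of length $n$ --- is true, but the transition-count bound $|I_w|\le(1/4)^{\#\text{transitions}}$ alone does not prove it: the word $L^aR^b$ has a single transition no matter how large $a+b$ is. You do flag this, but the fix you sketch needs to be completed. The cleanest route is to observe that any $w$ in the semigroup is a Möbius map $\frac{az+b}{cz+d}$ with integer entries and determinant $1$ lying in the semigroup generated by $\begin{pmatrix}1&0\\1&1\end{pmatrix}$ and $\begin{pmatrix}0&1\\-1&2\end{pmatrix}$, and then $|I_w|=|w(1)-w(0)|=\dfrac{1}{|d(c+d)|}$, from which one can get the required lower bound on $|d(c+d)|$ in terms of the word length directly; alternatively, split into cases by the number $k$ of transitions, note that if $k$ is small then some pure-power block $L^a$ or $R^a$ has $a\gtrsim n/k$, and check that a long pure power inserted anywhere in the word forces $|I_w|\lesssim k/n$. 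Either way there is real bookkeeping to do.

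The paper's proof takes a quite different and much shorter route: it avoids quantitative contraction entirely. Setting $J=\overline{\Lambda\cdot x}$, one notes $0,1\in J$ (since $L^n x\to 0$, $R^n x\to 1$) and hence $\tfrac12=R(0)\in J$. If $J\ne[0,1]$, take a complementary open interval $U$ of maximal length; since $\tfrac12\in J$, $U$ sits inside either $L([0,1])$ or $R([0,1])$, so $L^{-1}U$ or $R^{-1}U$ is a strictly longer complementary interval, a contradiction. The existence of a maximal gap is exactly where the pointwise bound $L',R'<1$ (distance-decreasing) is used, and it sidesteps both the tiling combinatorics and the degenerate contraction near the parabolic fixed points. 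Your approach, once the lemma is nailed down, gives a stronger, uniform statement (the depth-$n$ tiles all shrink, hence $w([0,1])\subset J$ for some $w$, independent of $x$), which is nice, but at a real cost in length; the maximality argument is the economical choice.
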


\begin{proof}
Let $J=\overline{ \Lambda \cdot x}$. 
Observe that $L^n\cdot x$ and $R^n\cdot x$ approach $0$ and $1$. 
Hence $0,1\in J$, and $\frac12=R(0)\in J$.

Towards contradiction, suppose that $U$ is an open interval in the complement $[0,1]\setminus J$ which is of maximal length, in that $|U|\ge|V|$ for every open interval $V\subset [0,1]\setminus J$. Because $U\subset [0,1]\setminus J$ and $\frac12\in J$, we have $\frac12\notin U$, and thus $U$ is contained in either $L([0,1])$ or $R([0,1])$. In either case, we find an open interval in $[0,1]\setminus J$ (either $L^{-1}U$ or $R^{-1}U$) of size strictly larger than $|U|$, a contradiction. 
\end{proof}

\begin{remark}
One can use (a semigroup variant of) the Ping-Pong Lemma to show that $\Lambda$ is freely generated by $L$ and $R$: in the language of \cite[Lem.~2.2]{Eskin-Mozes-Oh}, $L$ and $R$ can be recovered as $A^2\circ B$ and $A\circ B$, for $A(z)=\frac{z-1}{3z-2}$ and $B(z)=\frac{z-1}{2z-1}$, where $B([0,1])\cap [0,1]=\emptyset$.
\end{remark}

Now we prove Theorem~\ref{thm: prescribe arith twist}. Consider $(x,y)\in[0,1]^2$, and let $\epsilon>0$. 
It is easy to see that there exists a Farey triple $\frac ab<\frac pq < \frac cd$ contained in $(x-\epsilon,x+\epsilon)$: 
choose an interval $(w-\delta,w+\delta)\subset(x-\epsilon,x+\epsilon)$ for $\delta>0$, for $w$ a quadratic irrational, and an element $A\in\PSL(2,\bZ)$ with attracting fixed point $w$. 
The repelling fixed point of $A$ is the Galois conjugate of $w$, evidently not $0$, $1$, or $\infty$. 
Hence $A^nz\to w$ for $z\in\{0,1,\infty\}$, so for $n$ large enough $A^n\cdot\{0,1,\infty\}$ are the vertices of a Farey triple in $(w-\delta,w+\delta)\subset (x-\epsilon,x+\epsilon)$.

Now Proposition~\ref{prop: dense orbits} implies that there is an element $g\in \Lambda=\langle L,R\rangle$ so that $g\cdot \tau_\cF\left(\frac pq\right) \in (y-\epsilon,y+\epsilon)$. 
Let $\frac{p'}{q'}$ be the vertex of $\cF$ obtained by following the left-right turns downward along $\cT$, as dictated by $g$. 
It follows that $\tau_\cF\left(\frac{p'}{q'}\right) = g\cdot \tau_\cF\left(\frac pq\right)$.
By construction, the vertex $\frac{p'}{q'}$ is evidently a $\prec$-successor of $\frac pq$.
Therefore Lemma~\ref{lem: orders} implies that $\frac ab< \frac{p'}{q'}<\frac cd$, so that $\frac{p'}{q'}\in(x-\epsilon,x+\epsilon)$. Hence
\[
\left| \left( \frac{p'}{q'} , \tau_\cF\left( \frac{p'}{q'}\right) \right) - (x,y) \right| < 2\epsilon ~,
\]
as desired.

\bigskip
\section{Computing the absolute geometric twist}
\label{sec:compute geom}
Here we examine in more detail the absolute twist number of an oriented simple geodesic on a finite-volume hyperbolic surface $\cY$ homeomorphic to $\X$, using hyperbolic geometry to obtain a description that is more amenable to comparison with the Farey twist. 
The reader should view $\cY$ as fixed throughout this section; hence we will suppress the burdensome subscripts `$\cY$' (e.g.~$\tw(\gamma):=\tw_\cY(\gamma)$ and $\tau(\gamma):=\tau_\cY(\gamma)$).
Below, we make repeated use of the elliptic involution of $\cY$. 
Note that in \S\ref{sec: other structures}, we will observe that more or less the same computations hold when $\cY$ has geodesic boundary.

Suppose that $(\gamma_2,\gamma,\gamma_1)$ is a trio of simple closed geodesics pairwise intersecting once (such as the trio arising from a Markov triple), whose complement contains a pair of triangles that are exchanged by the elliptic involution of $\cY$, each of which sees the curves in the given cyclic order. 
The vertices of both triangles are $w_1,w_2,w_3$, and each curve passes through two Weierstrass points; suppose that $\gamma$ passes through $w_1$ and $w_2$, evidently diametrically opposed points on $\gamma$, and let $T$ be the triangle with vertices $w_1$, $w_2$, $w_3$ in counterclockwise order.
With this geometric picture in mind, it will be convenient to record the half-lengths of the curves: let $\ell:=\frac12\ell(\gamma)$ and $\ell_i:=\frac12\ell(\gamma_i)$.\\

\begin{figure}[h]
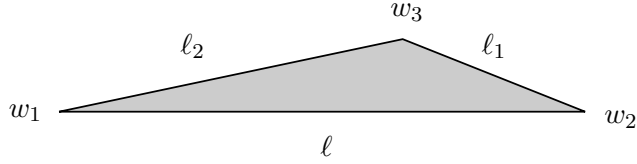

\vspace{.2cm}
\begin{lpic}{triangle(7cm)}
\lbl[]{20,10;$\ell_2$}
\lbl[]{40,-5;$\ell$}
\lbl[]{65,10;$\ell_1$}
\lbl[]{-5,0;$w_1$}
\lbl[]{84,-1;$w_2$}
\lbl[]{52,15;$w_3$}
\end{lpic}
\vspace{.5cm}
\caption{The triangle $T$, complementary to the triple $\gamma_2$, $\gamma$, $\gamma_1$.}
\label{pic: triangle}
\end{figure}

Choose an oriented lift $\tilde{\gamma}$ of $\gamma$ in the cover $\pi:\bH^2\to\cY$ (see Remark~\ref{rem: orientation}), and suppose that $g$ is a generator from the covering group of the cyclic subgroup preserving $\tilde{\gamma}$, preserving the orientation of $\tilde{\gamma}$.
The geodesic completion of $\cY\setminus \gamma$ is a hyperbolic pair of pants with one cusp, and it follows that there are precisely two simple orthogeodesics in $\cY$ from $\gamma$ to the cusp. 
The lifts of these orthogeodesics that are incident to $\tilde{\gamma}$ intersect $\tilde{\gamma}$ at a sequence of points preserved by $g$, 
which comprise two $\langle g\rangle$-orbits,
according to whether the corresponding orthogeodesic lifts lie to the left or right of $\tilde{\gamma}$; we call these lifts \emph{left} and \emph{right} $\tilde{\gamma}$-{orthogeodesics}, and the intersection points with $\tilde{\gamma}$ the \emph{left} and \emph{right foot points}. (Note that we allow the possibility that the two $\langle g\rangle$-orbits are identical, in which case each point in the orbit is both a left and right foot point.)
Evidently, every pair of distinct left (resp.~right) foot points is at distance $\ge 2\ell$, and the left and right foot points alternate along $\tilde{\gamma}$.

Choose a left foot point $A_l$, and let $A_r$ be the right foot point at non-negative distance from $A_l$ along $\tilde{\gamma}$ (that is, again $A_r=A_l$ is possible). 
Recall that the absolute geometric twist $\tw(\gamma)$ is the non-negative distance along the oriented geodesic $\tilde{\gamma}$ from the point $A_l$ to the point $A_r$.
Because $\gamma$ passes through $w_1$ and $w_2$ on $\X$, there are a pair of Weierstrass points $\widetilde{w_1}$ and $\widetilde{w_2}$ on $\tilde{\gamma}$ between $A_l$ and $g\cdot A_l$;
suppose that $\widetilde{w_1}$ is the first lift of a Weierstrass point on $\tilde{\gamma}$ at non-negative distance from $A_l$ (again, $\widetilde{w_1}=A_l$ is \emph{a priori} possible), and let $\widetilde{w_2}$ be the following one. (Strictly speaking, we do not yet know that $\widetilde{w_1}\in\pi^{-1}(w_1)$, since $\widetilde{w_1}\in\pi^{-1}(w_2)$ may seem possible -- this will be ruled out shortly, but suffice it to say $\pi(\widetilde{w_1})=w_1$ holds because of the assumption on the cyclic order in Figure~\ref{pic: triangle}.)
Let $\alpha_l$ and $\alpha_r$ be the left and right $\tilde{\gamma}$-orthogeodesics incident to $A_l$ and $A_r$, respectively.

\begin{lemma}
\label{lem: absolute twist}
The point $\widetilde{w_1}$ lies between $A_l$ and $A_r$ on $\tilde{\gamma}$, and $\tw(\gamma) = 2 d(A_l, \widetilde{w_1})$.
\end{lemma}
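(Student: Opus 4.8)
The plan is to use the elliptic involution of $\cY$ together with the symmetry it induces on the configuration of foot points. First I would set up notation: let $\sigma$ denote the order-two orientation-preserving isometry of $\cY$ fixing $w_1,w_2,w_3$, and fix the lift $\widetilde{\sigma}$ of $\sigma$ to $\bH^2$ that fixes $\widetilde{w_1}$ and reverses $\widetilde{\gamma}$ (this exists because $\widetilde{w_1}$ lies on $\widetilde{\gamma}$, $\sigma(w_1)=w_1$, and $\sigma(\gamma)=\gamma$ with $\sigma$ reversing the diametrically opposed structure at $w_1$). The key observation is that $\sigma$ swaps the cusp's orthogeodesic to $\gamma$ seen from the left with the one seen from the right: $\sigma$ is orientation-preserving but it reverses the orientation of $\gamma$, hence interchanges the "left side" and "right side" of $\gamma$, and it fixes the cusp (the punctured torus has a unique puncture). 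Consequently $\widetilde{\sigma}$ carries the set of left $\widetilde{\gamma}$-orthogeodesics bijectively to the set of right $\widetilde{\gamma}$-orthogeodesics, and hence carries left foot points to right foot points.

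Next I would exploit that $\widetilde{\sigma}$ is an involution of $\widetilde{\gamma}$ (a geodesic line) fixing the single point $\widetilde{w_1}$, so on $\widetilde{\gamma}$ it acts as the reflection $t\mapsto 2t_0 - t$ in the arclength parameter, where $t_0$ is the coordinate of $\widetilde{w_1}$. Since $\widetilde{\sigma}$ sends the left foot point $A_l$ to some right foot point, and $A_r$ was chosen to be the right foot point at minimal non-negative distance from $A_l$, I would argue as follows: among all right foot points, the closest one to $A_l$ on either side is the reflection $\widetilde{\sigma}(A_l)$, because the left and right foot points alternate along $\widetilde\gamma$ with consecutive gaps strictly between $0$ and $2\ell$ in each orbit, and the reflection about $\widetilde{w_1}$ maps the left-orbit to the right-orbit. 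Placing $A_l$ at coordinate $a$, its reflection sits at $2t_0 - a$, and the alternation property forces this to be the unique right foot point in the open interval of length $2\ell$ around $A_l$ that also lies "just past" $A_l$ in the positive direction — hence $A_r = \widetilde{\sigma}(A_l)$. From $A_r = \widetilde{\sigma}(A_l)$ and the fact that $\widetilde{\sigma}$ is reflection in $\widetilde{w_1}$, we get immediately that $\widetilde{w_1}$ is the midpoint of the segment from $A_l$ to $A_r$: it lies between them, and $d(A_l,\widetilde{w_1}) = d(\widetilde{w_1},A_r)$, so $\tw(\gamma) = d(A_l,A_r) = 2d(A_l,\widetilde{w_1})$.

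The step I expect to be the main obstacle is the bookkeeping that pins down exactly which right foot point $A_r$ equals — i.e., verifying that $\widetilde{\sigma}(A_l)$ is indeed the right foot point at minimal non-negative distance from $A_l$, rather than, say, $g^{-1}\widetilde{\sigma}(A_l)$ or $g\,\widetilde{\sigma}(A_l)$. This requires carefully tracking the cyclic order of foot points and Weierstrass-point lifts along $\widetilde{\gamma}$ within one fundamental domain $[A_l, g\cdot A_l)$, using the assumption recorded in Figure~\ref{pic: triangle} that the complementary triangle $T$ sees the curves in the stated counterclockwise cyclic order (this is what guarantees $\pi(\widetilde{w_1}) = w_1$ and fixes the side on which $\widetilde{w_1}$ sits relative to $A_l$ and $A_r$). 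I would handle this by noting that the two orthogeodesic orbits, together with $\pi^{-1}(w_1)\cap\widetilde\gamma$ and $\pi^{-1}(w_2)\cap\widetilde\gamma$, are all arranged in a single alternating/interleaved pattern dictated by the hexagonal complementary region containing the cusp, and that $\widetilde\sigma$ being a reflection fixing $\widetilde{w_1}$ is compatible with this pattern in exactly one way; the degenerate cases (left orbit = right orbit, or $\widetilde{w_1}=A_l$) are consistent with the conclusion since then $\tw(\gamma)=0=2d(A_l,\widetilde{w_1})$.
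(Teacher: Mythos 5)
Your proposal is correct and follows essentially the same approach as the paper: both take the lift of the elliptic involution fixing $\widetilde{w_1}$, note that it swaps left and right $\widetilde{\gamma}$-orthogeodesics and acts as the reflection in $\widetilde{w_1}$ on $\widetilde{\gamma}$, and conclude $A_r = \phi\cdot A_l$ so that $\widetilde{w_1}$ is the midpoint. The bookkeeping you flag as the potential obstacle is handled in the paper by the short observation that $d(A_l,\widetilde{w_1})<\ell$ forces $\phi\cdot A_l$ to be the unique right foot point in $[A_l, g\cdot A_l)$, which is exactly what your alternation argument gives.
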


\begin{proof}
Let $\phi$ be the lift of the elliptic isometry of $\cY$ that fixes $\widetilde{w_1}$, and observe that $\phi$ interchanges left and right $\tilde{\gamma}$-orthogeodesics.

We claim that $\phi \cdot \alpha_l=\alpha_r$. 
Indeed, $d(A_l,\widetilde{w_1})< \ell$ -- note that if it were equal to $\ell$, then in that case $A_l$ would be a lift of a Weierstrass point on $\tilde{\gamma}$, so in fact $\widetilde{w_1}$ would be equal to $A_l$ by construction -- so $\phi \cdot \alpha_l$ is a right $\tilde{\gamma}$-orthogeodesic that intersects $\tilde{\gamma}$ at $\phi \cdot A_l$, at distance $<2\ell$ from $A_l$. Hence $\phi \cdot \alpha_l$ and $\alpha_r$ are both right $\tilde{\gamma}$-orthogeodesics strictly between $A_l$ and $g\cdot A_l$, so $\phi \cdot \alpha_l=\alpha_r$.

It follows that $\phi \cdot A_l = A_r$, and so $\widetilde{w_1}$ is the midpoint of $A_l$ and $A_r$. The result follows, as $\tw(\gamma) = d(A_l,A_r) = 2d(A_l, \widetilde{w_1})$.
\end{proof}

As for the third Weierstrass point $w_3$, 
let $\eta$ be the geodesic between $\alpha_l(\infty)$ and $g\cdot \alpha_l(\infty)$, the endpoints of $\alpha_l$ and $g\cdot \alpha_l$ respectively.
One can check that the projection of $\eta$ under the covering map $\bH^2\to\cY$ is an embedding (since it is easy to construct a representative arc which is simple), and the image is a simple complete geodesic arc disjoint from $\gamma$. 
It follows that the point $\widetilde{w_3}$ of $\eta$ closest to $\tilde{\gamma}$ must satisfy $\pi(\widetilde{w_3})=w_3\in\cY$: The arc $\eta$ projects to a simple geodesic from the cusp to itself on the pair of pants $\cY\setminus \gamma$. The pair of pants has an evident order-two orientation-preserving isometry that exchanges geodesic boundary components, with one fixed point at the projection of $\widetilde{w_3}$. It follows that this isometry induces the elliptic isometry of $\cY$, and so the projection of $\widetilde{w_3}$ is a Weierstrass point. This Weierstrass point is on the projection of $\eta$, so it is not on $\gamma$, and hence $\widetilde{w_3}\in\pi^{-1}(w_3)$.

Now we may observe that $\pi(\widetilde{w_i})=w_i$ as expected: if instead $\pi(\widetilde{w_1})=w_2$, then the left orthogeodesic $\alpha_l$ leaves $\tilde{\gamma}$ interior to a lift of $T$. In that case, one of the two sides of lengths $\ell_1$ and $\ell_2$ from $\widetilde{w_3}$ would be longer than $\ell$, a contradiction.

We summarize:

\begin{proposition}
\label{prop: picture of lift}
There is a quadrilateral $Q$ in $\bH^2$, with two consecutive right-angles at $A_l$ and $g\cdot A_l$, and two ideal vertices $\alpha_l(\infty)$ and $g\cdot\alpha_l(\infty)$. Moreover, $\partial Q$ contains points $\widetilde{w_i}\in\pi^{-1}(w_i)$ so that $\widetilde{w_1},\widetilde{w_2}$ are consecutive from $A_l$ to $g\cdot A_l$, with $d(A_l,\widetilde{w_1})=\frac12\tw(\gamma)$, and where $\widetilde{w_3}$ is the point of the side between $\alpha_l(\infty)$ and $ g\cdot\alpha_l(\infty)$ projecting to the midpoint between $A_l$ and $g\cdot A_l$. 
The triangle $\widetilde{w_1}\widetilde{w_2}\widetilde{w_3}$ has side lengths $\ell =d(\widetilde{w_1},\widetilde{w_2})$, $\ell_1= d(\widetilde{w_2},\widetilde{w_3})$, and $\ell_2=d(\widetilde{w_1},\widetilde{w_3})$.
\end{proposition}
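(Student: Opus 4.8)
The plan is to assemble the pieces proved in the preceding discussion; most of the work is already done, so the ``proof'' is largely an act of bookkeeping, and I will isolate the single geometric fact that still needs an argument. First I would let $Q$ be the region of the upper half-plane bounded by the segment of $\tilde\gamma$ from $A_l$ to $g\cdot A_l$, the ray $g\cdot\alpha_l$ from $g\cdot A_l$ to $g\cdot\alpha_l(\infty)$, the arc $\eta$ from $g\cdot\alpha_l(\infty)$ to $\alpha_l(\infty)$, and the ray $\alpha_l$ from $\alpha_l(\infty)$ back to $A_l$. The right angles at $A_l$ and $g\cdot A_l$ are just the defining property of $\alpha_l$ and $g\cdot\alpha_l$ as $\tilde\gamma$-orthogeodesics. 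That these four arcs bound an embedded quadrilateral with the stated ideal vertices follows because $\alpha_l$ and $g\cdot\alpha_l$ are distinct, hence disjoint, lifts of the same simple orthogeodesic of $\cY$; $\eta$ is disjoint from $\tilde\gamma$, its projection to $\cY$ being disjoint from $\gamma$; $\alpha_l(\infty)\neq g\cdot\alpha_l(\infty)$, since a common value would be a fixed point of $g$ and hence an endpoint of $\tilde\gamma$, impossible for the ideal endpoint of a ray meeting $\tilde\gamma$ orthogonally; and the two rays leave $\tilde\gamma$ into a common half-plane, so $\eta$ lies in that half-plane and closes up the region.

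Next I would record the Weierstrass data. That $\widetilde{w_1}$ lies between $A_l$ and $A_r$ with $d(A_l,\widetilde{w_1})=\tfrac12\tw(\gamma)$ is precisely Lemma~\ref{lem: absolute twist}, and the paragraphs above establish $\pi(\widetilde{w_i})=w_i$ for $i=1,2,3$, in particular that $\widetilde{w_3}$, the point of $\eta$ nearest $\tilde\gamma$, projects to $w_3$. Two additions remain. For $\widetilde{w_2}$: the elliptic involution preserves $\gamma$ with exactly $w_1$, $w_2$ as its fixed points there, so these are antipodal on $\gamma$, whence consecutive lifts of Weierstrass points along $\tilde\gamma$ are spaced at distance $\ell=\tfrac12\ell(\gamma)$; thus $\widetilde{w_2}$ sits at distance $\tfrac12\tw(\gamma)+\ell$ from $A_l$, which is less than $2\ell=d(A_l,g\cdot A_l)$ because $\tw(\gamma)<\ell(\gamma)$ (the left and right foot points strictly alternate), so $\widetilde{w_2}$ is the next lift before $g\cdot A_l$. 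The point I expect to be the main obstacle is that $\widetilde{w_3}$ projects orthogonally onto the midpoint $F$ of $A_l$ and $g\cdot A_l$. I would prove this by a short computation in the upper half-plane: normalize $\tilde\gamma$ to the positive imaginary axis, oriented upward, with the side carrying the $\alpha_l$'s equal to $\{\Re<0\}$, and write $A_l=iy_0$. Then $\alpha_l$ has ideal endpoint $-y_0$ and $g\cdot\alpha_l(\infty)=-y_0e^{2\ell}$; the common perpendicular of the imaginary axis with the geodesic $\eta$ from $-y_0$ to $-y_0e^{2\ell}$ is the Euclidean semicircle of radius $\sqrt{y_0\cdot y_0e^{2\ell}}=y_0e^{\ell}$, meeting the imaginary axis at $iy_0e^{\ell}$, the point at hyperbolic distance $\ell$ from $A_l$ toward $g\cdot A_l$ --- namely $F$. (Intrinsically this reflects the symmetry of the ``half pair of pants'' cut from $\cY\setminus\gamma$ by $\bar\eta$: it is invariant under the involution fixing $w_3$, which forces the foot of the left orthogeodesic and the foot of the $\gamma$-perpendicular through $w_3$ to be antipodal on $\gamma$; either route is fine.)

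Finally, the side lengths of $\widetilde{w_1}\widetilde{w_2}\widetilde{w_3}$: the segment $\widetilde{w_1}\widetilde{w_2}$ is a lift of a half-arc of $\gamma$, so $d(\widetilde{w_1},\widetilde{w_2})=\ell$; and since $\widetilde{w_3}$ lies on the $\eta$-side of $\tilde\gamma$ and --- as established in the discussion ruling out $\pi(\widetilde{w_1})=w_2$ --- the three points are the vertices of a lift of one of the two complementary triangles of the trio $\gamma_2,\gamma,\gamma_1$, the remaining lengths are read off that triangle in $\cY$: $d(\widetilde{w_1},\widetilde{w_3})=d(w_1,w_3)=\ell_2$ and $d(\widetilde{w_2},\widetilde{w_3})=d(w_2,w_3)=\ell_1$, using $\pi(\widetilde{w_1})=w_1$ and $\pi(\widetilde{w_2})=w_2$. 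The only genuinely delicate steps are the half-plane computation for $F$ (or its intrinsic replacement) and the orientation bookkeeping that pairs $\ell_1$ with the $\widetilde{w_2}\widetilde{w_3}$-side and $\ell_2$ with the $\widetilde{w_1}\widetilde{w_3}$-side.
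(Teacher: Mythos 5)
Your proposal is correct and follows the same line as the paper, which presents this proposition as a summary (``We summarize:'') of the preceding discussion in \S4 rather than giving a separate proof; you cite Lemma~\ref{lem: absolute twist} for the location of $\widetilde{w_1}$ and the preceding paragraphs for $\pi(\widetilde{w_i})=w_i$, exactly as the paper intends. The one place the paper leaves a genuine gap --- that the closest point $\widetilde{w_3}$ of $\eta$ to $\tilde\gamma$ projects to the \emph{midpoint} of $A_l$ and $g\cdot A_l$ --- you handle cleanly with the upper half-plane normalization giving the foot at height $\sqrt{y_0\cdot y_0e^{2\ell}}=y_0e^\ell$, and your bound $\tfrac12\tw(\gamma)+\ell<2\ell$ correctly accounts for the degenerate case $A_r=A_l$ allowed in the paper's setup.
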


\begin{figure}
\begin{lpic}{SaccheriQuad(9cm)}
\lbl[]{15,-5;$A_l$}
\lbl[]{180,-5;$g\cdot A_l$}
\lbl[]{48,7;$x$}
\lbl[]{69,-6;$\widetilde{w_1}$}
\lbl[]{139,-6;$\widetilde{w_2}$}
\lbl[]{96,-5;$M$}
\lbl[]{97,36;$\widetilde{w_3}$}
\lbl[]{76,17;$\ell_2$}
\lbl[]{125,17;$\ell_1$}
\end{lpic}
\vspace{.8cm}
\caption{The twist $\tw \left(\gamma \right)$ is $2x=2d(M,\widetilde{w_2})$.}
\label{pic: quadrilateral}
\end{figure}

Now let $x=\frac12 \tw(\gamma)=d(A_l,\widetilde{w_1})$, and let $M\in \partial Q$ be the midpoint of $A_l$ and $g\cdot A_l$, evidently a point between $\widetilde{w_1}$ and $\widetilde{w_2}$. Because $d(A_l,\widetilde{w_1})=x$ and $d(\widetilde{w_1},\widetilde{w_2})=\ell$, we find that $d(M,\widetilde{w_2})=x$ as well.

Hyperbolic trigonometry in the quadrilateral with vertices $A_l$, $M$, $\widetilde{w_3}$, and $\alpha_l(\infty)$ (that is, half of $Q$) can be used \cite[\S8.5]{Fenchel-Nielsen} to demonstrate that 
\begin{equation}
\label{eq: ideal lambert}
\sinh \ell \cdot \sinh d(M,\widetilde{w_3}) =1~.
\end{equation}
Thus $d(M,\widetilde{w_3}) =f(\ell)$, where $f(x) = \sinh^{-1}\left( \frac1{\sinh x} \right)$.

Hence $M\widetilde{w_2}\widetilde{w_3}$ is a right-angled hyperbolic triangle with hypotenuse length $\ell_1$ and side lengths $x$ and $f(\ell)$.
By the Pythagorean Theorem for hyperbolic triangles,
\begin{equation}
\label{eq: pythagoras}
\cosh \ell_1 = \cosh x \cosh f(\ell) = \cosh x \coth \ell~.
\end{equation}
Rearranging, we find that
\begin{equation}
\label{eq:x}
x= \cosh^{-1}\left( \tanh \ell \cosh \ell_1 \right) =\ell_1 + O\left( e^{-2\ell}\right)~.
\end{equation}

We have proved:

\begin{proposition}
\label{prop: estimate geom twist}
There is a constant $C=C(\cY)>0$ so that $\left| \tw_\cY(\gamma)- \ell_\cY(\gamma_1)\right|\le Ce^{-\ell_\cY(\gamma)}$.
\end{proposition}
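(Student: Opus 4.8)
The statement is, essentially, a repackaging of the computation already carried out in equations \eqref{eq: ideal lambert}--\eqref{eq:x}: I have reduced the absolute twist to the formula $\tw_\cY(\gamma) = 2x$ with $x = \cosh^{-1}(\tanh\ell \cdot \cosh\ell_1)$, where $\ell = \tfrac12\ell_\cY(\gamma)$ and $\ell_1 = \tfrac12\ell_\cY(\gamma_1)$. So the proof is a matter of making the asymptotic claim $x = \ell_1 + O(e^{-2\ell})$ precise, uniformly in the relevant data, and then noting $2x - 2\ell_1 = 2x - \ell_\cY(\gamma_1)$ with $2\ell = \ell_\cY(\gamma)$.

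First I would set up the trio $(\gamma_2,\gamma,\gamma_1)$: given the simple closed geodesic $\gamma$, let $\gamma_1, \gamma_2$ be the two simple closed geodesics that intersect $\gamma$ once and whose complement, together with $\gamma$, gives the configuration of Proposition~\ref{prop: topology of triples} — concretely, $\gamma_1$ is one of the two ``Farey neighbors'' of $\gamma$. By Propositions~\ref{prop: picture of lift} and the trigonometric identities \eqref{eq: pythagoras}--\eqref{eq:x}, we have the exact formula $\tfrac12\tw_\cY(\gamma) = x = \cosh^{-1}(\tanh\ell\,\cosh\ell_1)$. The next step is purely analytic: write $\tanh\ell = 1 - \delta$ with $0 < \delta = \tanh\ell \cdot \frac{2e^{-2\ell}}{1+e^{-2\ell}} < 2e^{-2\ell}$, so $\cosh x = \cosh\ell_1 - \delta\cosh\ell_1$, and then estimate $\cosh^{-1}(\cosh\ell_1 - \delta\cosh\ell_1) - \ell_1$. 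Since $\frac{d}{du}\cosh^{-1}(u) = (u^2-1)^{-1/2}$, the mean value theorem gives $|x - \ell_1| \le \delta\cosh\ell_1 \cdot (\cosh^2\ell_1 - 1)^{-1/2} = \delta\coth\ell_1 \le \delta\coth\ell_0$, where $\ell_0$ is a lower bound for the half-length of any simple closed geodesic on $\cY$ (such a bound exists and depends only on $\cY$, e.g.~half the systole, or one may simply note $\ell_1 \ge \frac12\ell_\cY(\gamma_1) \ge \frac12 \mathrm{sys}(\cY)$). Combining, $|x - \ell_1| \le 2\coth(\ell_0)\, e^{-2\ell} = 2\coth(\ell_0)\, e^{-\ell_\cY(\gamma)}$, so $|\tw_\cY(\gamma) - \ell_\cY(\gamma_1)| = |2x - 2\ell_1| \le 4\coth(\ell_0)\, e^{-\ell_\cY(\gamma)}$, which is the claim with $C = 4\coth(\ell_0) = C(\cY)$.

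One subtlety to address: the formula $\tw_\cY(\gamma) = 2x$ was derived under the assumption that $\gamma$ sits in a trio of the given cyclic type with $\gamma$ passing through $w_1, w_2$; one must note that \emph{every} simple closed geodesic on $\cY$ does arise this way (each simple closed geodesic has two Farey neighbors, and the trio has the required complementary structure by Proposition~\ref{prop: topology of triples}), so the bound applies to all $\gamma \in \cS$. I should also remark that $\gamma_1$ here is one of the \emph{two} choices of neighbor — either works for the statement as written (the bound holds with either $\gamma_1$), and in fact the role of $\gamma_1$ versus $\gamma_2$ corresponds precisely to the choice that will matter for Proposition~\ref{prop:comparing twists}. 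The main — indeed only — obstacle is bookkeeping: ensuring the constant genuinely depends only on $\cY$ (through $\mathrm{sys}(\cY)$) and not on $\gamma$, which is handled by the uniform lower bound $\ell_1 \ge \tfrac12\mathrm{sys}(\cY)$ above. The analytic estimate itself is elementary and I would not belabor it beyond the mean-value-theorem line.
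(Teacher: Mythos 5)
Your overall approach matches the paper's: both reduce to the exact formula $\tfrac12\tw_\cY(\gamma)=x=\cosh^{-1}(\tanh\ell\,\cosh\ell_1)$ coming from the right-angled quadrilateral, and both then need to turn this into a quantitative $x=\ell_1+O(e^{-2\ell})$ with a constant depending only on $\cY$. But the mean-value-theorem step is applied with the wrong endpoint, and this is a genuine error, not a cosmetic one. The derivative $(\cosh^{-1})'(u)=(u^2-1)^{-1/2}$ is \emph{decreasing}, so on the interval $[\cosh x,\cosh\ell_1]$ its maximum is attained at the \emph{left} endpoint $\cosh x$, not at $\cosh\ell_1$. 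MVT therefore gives
\[
\ell_1-x \ \le\ \frac{\cosh\ell_1-\cosh x}{\sinh x} \ = \ \frac{\delta\cosh\ell_1}{\sinh x}\,,
\]
while what you wrote, $\ell_1-x\le\delta\cosh\ell_1/\sinh\ell_1=\delta\coth\ell_1$, is in fact a \emph{lower} bound on the MVT quantity (since $x<\ell_1$ forces $\sinh x<\sinh\ell_1$). The inequality can actually fail: for instance $\cosh\ell_1=10$, $\delta=0.1$ gives $\ell_1-x=\cosh^{-1}(10)-\cosh^{-1}(9)\approx 0.106$, whereas $\delta\coth\ell_1\approx 0.1005$.

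The correct bound $\delta\cosh\ell_1/\sinh x$ is genuinely more delicate, and your ``bookkeeping'' concern is real. First, $\sinh x$ is not obviously bounded below: $\cosh x=\tanh\ell\cosh\ell_1$ can approach (or attain) $1$ when $\ell$ is small. Second, even granted $\sinh x$ bounded below, $\delta\cosh\ell_1\approx 2e^{-2\ell}\cosh\ell_1$ is not $O(e^{-2\ell})$ since $\ell_1$ is unbounded over $\gamma\in\cS$. What saves the day is that when $\ell_1$ is large, $\sinh x=\sqrt{\tanh^2\ell\cosh^2\ell_1-1}$ is comparably large, so the \emph{ratio} $\cosh\ell_1/\sinh x$ stays bounded. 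Concretely: writing $\ell_0=\tfrac12\mathrm{sys}(\cY)$, one checks $\cosh^2\ell_1/(\tanh^2\ell\cosh^2\ell_1-1)=(\tanh^2\ell-\mathrm{sech}^2\ell_1)^{-1}\le(\tanh^2\ell-\mathrm{sech}^2\ell_0)^{-1}$, which is a finite constant $C_1(\cY)$ once $\ell$ exceeds a threshold $\ell_*(\cY)$; the finitely many $\gamma$ with $\ell<\ell_*$ are absorbed by enlarging $C$. (A slightly cleaner variant avoids $\sinh x$ altogether: write $\ell_1-x=\int_{\cosh x}^{\cosh\ell_1}(u^2-1)^{-1/2}\,du$ and use $(u^2-1)^{-1/2}\le C/u$ for $u$ bounded away from $1$ to get $\ell_1-x\le C\log\coth\ell=O(e^{-2\ell})$ with no $\cosh\ell_1$ factor.)

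One more point: the aside that ``either works for the statement as written (the bound holds with either $\gamma_1$)'' cannot be right. If $\tw_\cY(\gamma)$ were within $O(e^{-\ell_\cY(\gamma)})$ of \emph{both} $\ell_\cY(\gamma_1)$ and $\ell_\cY(\gamma_2)$, the two parent lengths would be exponentially close, and this fails, e.g., for $\gamma_{1/n}$ on $\X$ where one parent is $\gamma_{0/1}$ (fixed length) and the other is $\gamma_{1/(n-1)}$ (length $\to\infty$). The cyclic-order convention fixed in the setup (via the counterclockwise order on $T$ and the choice of $\widetilde{w_1}$ as the first Weierstrass lift after $A_l$) is what singles out $\gamma_1$; the estimate holds only for that choice.
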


\bigskip
\section{Comparing arithmetic and geometric twist numbers}
\label{sec: comparison}

Recall that $\tau_\X,\tau_\cF:\bQ\cap[0,1]\to[0,1]$ are the relative geometric and arithmetic twist functions, respectively.
In this section, we prove Proposition~\ref{prop:comparing twists}, i.e.~the inequality
\begin{equation}
\label{eq: control}
\left | \tau_\X\left( \frac pq \right) - \tau_\cF \left( \frac pq\right) \right| \le \frac C{q^2}
\end{equation}
for a constant $C=C(\X)>0$. 

Suppose that $\frac pq$ is the center of the Farey triple $\frac {p_1}{q_1} < \frac pq < \frac {p_2}{q_2}$. 
Let $\gamma_i=\gamma_{p_i/q_i}$ and $\gamma=\gamma_{p/q}$. 
Keeping notation as in \S\ref{sec:compute geom}, we find that
\[
\tau_\cF\left( \frac pq\right) = \frac{q_1}q \ , \ \ \text{and} \ \ \tau_\X\left(\frac pq\right) = \frac {\frac12\tw_\X(\gamma)}{\ell}~.
\]

By Proposition~\ref{prop: estimate geom twist}, we find that
\[
\left| \tau_\X\left(\frac pq\right) - \frac{\ell_1}\ell \right| \le Ce^{-2\ell}~.
\]

Recall Fock's function $\Psi:[0,1]\to\bR$, a continuous, positive function, satisfying $\Psi\left(\frac pq\right) = \frac 1q \ell_\X\left( \gamma_{p/q}\right)$. It follows that
\[
\frac{\ell_1}\ell = \frac{q_1 \; \Psi\left(\frac{p_1}{q_1}\right)}{q \; \Psi\left( \frac pq \right)}~,
\]
so that
\begin{align*}
\left |\; \tau_\X\left( \frac pq \right) - \tau_\cF \left( \frac pq\right) \;\right| \le 
\left|\; \frac{q_1}q - \frac{\ell_1}\ell\; \right| + Ce^{-2\ell} = \frac{q_1}{q\;\Psi\left(\frac pq\right)} \left| \;\Psi\left(\frac pq\right) - \Psi\left(\frac{p_1}{q_1}\right) \; \right| + Ce^{-2\ell}~.
\end{align*}
Because $\Psi$ is convex, it is Lipschitz. Because it is bounded away from zero, $e^{-2\ell}=e^{-2q\Psi(p/q)}<e^{-k q}$ for some constant $k>0$. Rechoosing $C$ if necessary,
\[
\left |\; \tau_\X\left( \frac pq \right) - \tau_\cF \left( \frac pq\right) \;\right| 
\le C\; \frac{q_1}{q}  \left| \frac pq - \frac{p_1}{q_1} \right| + Ce^{-kq}
= \frac C{q^2} + Ce^{-kq}~,
\]
and \eqref{eq: control} follows.

\bigskip
\section{Other hyperbolic structures on a one-holed torus}
\label{sec: other structures}
Here we indicate how to alter the above proof of Theorem~\ref{main thm} to obtain Theorem~\ref{thm: other points}.
First, note that Definition~\ref{def: geom twist} admits a straightforward generalization when the finite-volume assumption is removed. 
Namely, the points $A_l$ and $A_r$, which had been the projections of the cusp of $\X$ to either side of $\gamma$, are replaced by the intersection points on $\gamma$ of the two simple orthogeodesics from the boundary parallel geodesic of $\cY$ to $\gamma$.

\begin{figure}[h]
\includegraphics[width=7cm]{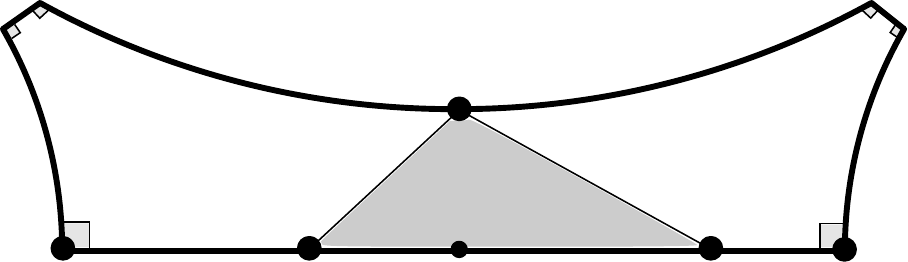}
\caption{The quadrilateral from Figure~\ref{pic: quadrilateral} becomes a right-angled hexagon when the boundary of $\cY$ has a geodesic representative.}
\label{pic: hexagon}
\end{figure}

Now we observe that the material from \S\ref{sec:compute geom} did not rely on the fact that the hyperbolic structure on $\cY$ is finite-volume; indeed, the quadrilateral $Q$ from Proposition~\ref{prop: picture of lift} becomes a right-angled hyperbolic hexagon (see Figure~\ref{pic: hexagon}), and \eqref{eq: ideal lambert} becomes
\[
\sinh \ell \cdot \sinh d(M,\widetilde{w_3}) = \cosh \frac\beta 2 ~,
\]
where $\beta$ is the length of the geodesic parallel to the boundary of $\cY$. We find that \eqref{eq: pythagoras} becomes
\[
\cosh \ell_1 = \cosh x \coth \ell \sqrt{ \tanh^2\ell+ \cosh^2 \frac\beta 2 \,\mathrm{sech}^2\,\ell }~,
\]
and \eqref{eq:x} becomes
\[
x= \cosh^{-1}\left( \frac{\tanh \ell \cosh \ell_1}{\tanh^2\ell+ \cosh^2 \frac\beta 2 \,\mathrm{sech}^2\,\ell}\right)~. 
\]
Observe that 
\[
\frac{\tanh \ell}{\tanh^2\ell+ \cosh^2 \frac\beta 2 \,\mathrm{sech}^2\,\ell} = 1+O\left( e^{-2\ell}\right)~,
\]
with an implied constant that depends on $\beta$, and hence
\[
x = \cosh^{-1}\left( \left( 1+ O\left( e^{-2\ell}\right)\right) \cosh \ell_1\right) = \ell_1 + O\left( e^{-2\ell}\right)~,
\]
as before.
Therefore we have
\[
\left| \tw_\cY(\gamma) -  \ell_\cY(\gamma_1) \right| \le Ce^{-\ell_\cY(\gamma)}~,
\]
for $C=C(\cY)>0$,
where $\gamma$ is the center of the ordered Farey triple $\gamma_2$, $\gamma$, $\gamma_1$ of simple geodesics on $\cY$.

As for the comparison between geometric and arithmetic twists as in \S\ref{sec: comparison}, we let
\[
\Psi_\cY\left(\frac pq\right) =  \frac {\ell_\cY\left( \frac pq\right)  }q~.
\]
Recall that Fock's $\Psi$ is continuous, positive, and convex by virtue of the fact that the hyperbolic length 
\[
L_\X(q,p) := \ell_\X\left(\gamma_{p/q}\right)
\]
extends continuously from primitive integral points to $H_1(\X,\bR)$ \cite[\S5]{McShane-Rivin}: first one extends linearly to $H_1(\X,\bQ)$, then one uses the strict convexity $L_\X(x+y)<L_\X(x)+L_\X(y)$, for $x$ and $y$ not collinear, to extend continuously to $H_1(\X,\bR)$.
Note that $L_\X$ is equal to zero only at the origin, and is continuous, homogeneous, and convex, so
\[
\Psi (x) = L_\X\left(1, x \right)
\]
is continuous, positive, and convex as well. (Note that because $\frac1q L_\X(q,p) = L_\X(1,p/q)$, this definition extends \eqref{eq:Fock}.)
The same argument applies verbatim to $\Psi_\cY$, which is therefore also continuous, positive, and convex. Therefore the argument in \S\ref{sec: comparison} demonstrates that \eqref{eq: control} holds for $\tau_\cY$ as well, with a constant $C=C(\cY)>0$. Therefore Theorem~\ref{thm: prescribe arith twist} implies that $\tau_\cY$ has dense graph as well.

\bigskip
\section{Explicit Computations}
\label{sec: explicit}
Here we point out that one can compute the relevant geometric quantities in \S\ref{sec:compute geom} rather explicitly for $\X$ as a function of a Markov \emph{triple}. 
Of course, this is unsatisfying, as MUC asserts that it is possible to deduce aspects of $\tw_\X(\gamma)$ from the corresponding Markov number alone.
Nonetheless, it is pleasant to observe the interaction of the hyperbolic geometry of $\X$ and the number theoretic aspects of $\cM$. The interested reader should consult \cite{Springborn1,Springborn2} for more detail on such connections.

First we point out that the absolute twist number $\tw_\X(\gamma)$ admits a surprisingly simple formula.

\begin{theorem}
\label{thm: explicit}
Suppose that $(\gamma_2,\gamma,\gamma_1)$ is a trio of oriented simple geodesics on $\X$ that pairwise intersect once so that either of the complementary triangles sees the curves in the given cyclic order, and suppose that the corresponding ordered Markov triple is $(n_1,n,n_2)$ (see Remark~\ref{rem: orientation switch}).
Then we have
\begin{equation}
\label{eq: twist}
\tw_\X(\gamma) = 2\cosh^{-1}\left( \frac{n_1}2 \sqrt{9-\frac4{n^2}}\right)
\end{equation}
\end{theorem}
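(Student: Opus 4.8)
The plan is to read the formula straight off of \eqref{eq:x} in \S\ref{sec:compute geom}, after translating lengths into traces. The hypotheses of Theorem~\ref{thm: explicit} — a trio of simple geodesics pairwise meeting once, with each complementary triangle seeing the cyclic order $(\gamma_2,\gamma,\gamma_1)$ — are precisely those in force when \eqref{eq:x} was derived via Proposition~\ref{prop: picture of lift}. Writing $\ell=\tfrac12\ell_\X(\gamma)$ and $\ell_1=\tfrac12\ell_\X(\gamma_1)$, that equation says
\[
\tw_\X(\gamma) = 2\cosh^{-1}\!\left(\tanh\ell\,\cosh\ell_1\right),
\]
so it remains only to evaluate $\tanh\ell$ and $\cosh\ell_1$ in terms of the Markov triple.

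For this I would invoke the standard identity $|\tr| = 2\cosh(L/2)$ for a hyperbolic element of $\PSL(2,\bR)$ of translation length $L$, together with the fact (recorded just after \eqref{eq:markov}) that the simple closed geodesic of $\X$ carrying the Markov label $m$ has trace $3m$. Applied to $\gamma$ and $\gamma_1$ this gives $\cosh\ell = \tfrac{3n}{2}$ and $\cosh\ell_1 = \tfrac{3n_1}{2}$, hence $\sinh\ell = \tfrac12\sqrt{9n^2-4}$ and $\tanh\ell = \sqrt{9n^2-4}/(3n)$. Substituting,
\[
\tanh\ell\,\cosh\ell_1 = \frac{\sqrt{9n^2-4}}{3n}\cdot\frac{3n_1}{2} = \frac{n_1}{2}\sqrt{9-\frac{4}{n^2}},
\]
and plugging back into the previous display yields \eqref{eq: twist}.

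The computation itself is routine; the only delicate point is bookkeeping. One must check that the convention fixing which of the two ``outer'' curves plays the role of $\gamma_1$ — that is, the cyclic order together with the orientation reversal of Remark~\ref{rem: orientation switch} — agrees with the convention used to derive \eqref{eq:x}, since interchanging the roles of $n_1$ and $n_2$ genuinely changes the right-hand side of \eqref{eq: twist}. This is exactly the content that Proposition~\ref{prop: topology of triples} and the discussion around it were arranged to guarantee, so once that correspondence is cited there is nothing further to do.
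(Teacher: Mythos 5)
Your argument is the paper's own: extract $\cosh x = \tanh\ell\,\cosh\ell_1$ from \eqref{eq: pythagoras}/\eqref{eq:x}, convert half-lengths to traces via $\cosh\ell = \tfrac{3n}{2}$ and $\cosh\ell_1 = \tfrac{3n_1}{2}$, and simplify. The substitution and the orientation bookkeeping are both handled correctly, so this matches the paper's proof essentially verbatim.
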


Observe that we may conclude:

\begin{corollary}
\label{cor: nonzero}
For every $\gamma\in \cS$, $\tau_\X(\gamma)\ne 0$.
\end{corollary}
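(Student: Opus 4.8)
The plan is to deduce the corollary directly from the explicit formula \eqref{eq: twist} of Theorem~\ref{thm: explicit}, by checking that the argument of $\cosh^{-1}$ there is always strictly larger than $1$. Fix $\gamma\in\cS$. Up to the action of $\Isom(\X)$ we may assume $\gamma=\gamma_{p/q}$ for some $\frac pq\in\bQ\cap[0,1]$, so that $\gamma$ is the center of a Farey triple; by Proposition~\ref{prop: topology of triples} this corresponds to an ordered Markov triple $(n_1,n,n_2)$, with $n=\lambda_\cM(H)$ and $n_1$ a Markov number appearing as an immediate precedent. Theorem~\ref{thm: explicit} then gives
\[
\tw_\X(\gamma)=2\cosh^{-1}\!\left(\frac{n_1}2\sqrt{9-\frac4{n^2}}\right),
\]
and since $\cosh^{-1}(t)=0$ exactly when $t=1$ and is positive for $t>1$, it suffices to show $\frac{n_1}2\sqrt{9-\frac4{n^2}}>1$ for every ordered Markov triple.

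The key estimates are: first, $n_1\ge 1$ since Markov numbers are positive integers; and second, $n\ge n_1\ge 1$ need not hold in an ordered triple, but regardless $n\ge 1$, so $9-\frac4{n^2}\ge 9-4=5$, whence $\sqrt{9-\frac4{n^2}}\ge\sqrt5$. Therefore
\[
\frac{n_1}2\sqrt{9-\frac4{n^2}}\ge\frac12\sqrt5>1,
\]
and $\tw_\X(\gamma)>0$. Consequently $\tau_\X(\gamma)=\tw_\X(\gamma)/\ell_\X(\gamma)>0$ as well, which is the claim. The lone edge case to address is the triple $(1,1,1)$: here $n=1$, $n_1=1$, and the bound above still gives $\frac12\sqrt5>1$, so the argument is uniform — although one should note that the systole triple $\alpha,\beta,\gamma$ plays the role of $(1,1,1)$ and $\gamma$ is indeed a systole with a positive twist, consistent with the conclusion.

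I do not expect a genuine obstacle here: the corollary is a one-line consequence of the explicit formula, and the only thing to verify is the numerical inequality $\frac12\sqrt5>1$, i.e.\ $5>4$. The mild subtlety worth a sentence in the writeup is making sure that \emph{every} $\gamma\in\cS$ arises as the center of some Farey triple with a well-defined ordered Markov triple attached (so that Theorem~\ref{thm: explicit} applies), which follows from the setup in \S\ref{sec: background} — every vertex of $\cF$ other than $\infty$ is the center of a unique Farey triple, and the three initial systoles, which are the "$\prec$-minimal" curves, are likewise covered since the formula with $n_1=n=1$ still yields a positive value.
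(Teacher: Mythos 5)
Your argument is correct and is exactly what the paper intends: the paper states the corollary immediately after Theorem~\ref{thm: explicit} with only the phrase ``observe that we may conclude,'' leaving the reader to note that the argument of $\cosh^{-1}$ in \eqref{eq: twist} is at least $\tfrac12\sqrt{5}>1$ since $n_1\ge 1$ and $n\ge 1$, hence $\tw_\X(\gamma)>0$. Your write-up fills in precisely that numerical check, so the approaches coincide.
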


The proof of Theorem~\ref{thm: explicit} follows quickly from \eqref{eq: pythagoras}: 
\begin{figure}[h]
\begin{lpic}{rightTriangle(6cm)}
\lbl[]{-3,5;$f(\ell)$}
\lbl[]{10,-2;$x$}
\lbl[]{15,8;$\ell_1$}
\end{lpic}
\label{pic: right triangle}
\vspace{.5cm}
\end{figure}

The desired twist is $2x$, where 
\[
\cosh x = \frac{\cosh \ell_1}{\cosh\left( \sinh^{-1}\left( \frac 1{\sinh \ell}\right)\right)}= \frac{\cosh \ell_1}{\cosh \ell} \cdot \sinh \ell~,
\]
and $\cosh \ell$ and $\cosh \ell_1$ are given by $\frac32 n$ and $\frac 32n_1$, respectively.

More can be said about the geometric data associated to a simple closed geodesic on $\X$. 
In particular, the entire polygon $Q$ from Proposition~\ref{prop: picture of lift} can be made explicit.
We briefly sketch the computations:
By Proposition~\ref{prop: fractional markov}, there is a choice of lift $\tilde{\gamma}$ that is a hemisphere centered at $\frac32 + \frac kn$, with endpoints lying between $\frac kn$ and $3+\frac kn$. 
A tedious but straightforward computation demonstrates that
\[
\tilde{\gamma}(+\infty) = \frac{3+L}2+\frac kn \ , \ \ \text{ and} \ \ \ \tilde{\gamma}(-\infty) = \frac{3-L}2+\frac kn~,
\]
where $L=\sqrt{9-\frac4{n^2}}$ is the associated \emph{Lagrange number} \cite{Aigner}, and that the element $g\in \Gamma$ representing the curve $\gamma$ is given by
\begin{equation}
\label{eq:g}
g =
\begin{pmatrix}
3n+k & -3k- \frac{1+k^2}n \\
n & -k
\end{pmatrix}~.
\end{equation}
The point $A_l$ is evidently given by 
\[
A_l = \frac 32+\frac kn + \frac{iL}2 ~.
\]

Now we turn to the problem of locating $\widetilde{w_1}$. First observe that order two elements of $\PSL(2,\bZ)$ that normalize the cyclic subgroup generated by $h=\left( h_{ij}\right)$ have representative matrices 
\begin{equation}
\label{eq: matrix Weierstrass}
\begin{pmatrix}
a & b \\
c & -a 
\end{pmatrix}
\end{equation}
for $a,b,c\in\bZ$ so that $a^2+bc+1=0$, with fixed point $\frac{a+i}c$, and so that
\[
a \left( h_{11} - h_{22} \right) + b \, h_{12} + c \, h_{21} =0~.
\]
Replacing $h$ with \eqref{eq:g} and eliminating $b$ in this equation, we find 
\begin{equation}
\label{eq:quad form}
na^2-(3n+2k)ac+\left(3k+\frac{1+k^2}n\right)c^2 = -n~.
\end{equation}

\begin{remark}
The integral quadratic form on the lefthand-side of \eqref{eq:quad form} is yet another fundamental invariant of the Markov triple $(n_2,n,n_1)$, whose analysis goes back to Markov's pioneering work \cite{Markov}. One can check that this form is $\SL(2,\bZ)$-equivalent (via the substitution $(a,c)=(x+3y,y)$) to
\[
nx^2 + (3n-2k)xy+\left( -3k+ \frac{1+k^2}n\right)y^2~,
\]
often referred to as the \emph{Markov form} associated to the triple $n_1,n,n_2$ \cite[Ch.~2]{Cusick-Flahive}.
\end{remark}

Observe that $\widetilde{w_1}$ is the \emph{first} Weierstrass point along $\widetilde{\gamma}$ that follows $A_l$; therefore its real part $\Re(\widetilde{w_1})$ is minimum among the real parts $\frac ac$ of the fixed points of \eqref{eq: matrix Weierstrass}, i.e.~integral solutions to \eqref{eq:quad form}, subject to the constraint that $\frac ac \ge \Re(A_l)=\frac 32 +\frac kn$.

\begin{proposition}
\label{prop: neighbors}
The integral solutions $(a,c)$ to \eqref{eq:quad form} that satisfy $\frac kn \le \frac ac \le 3+\frac kn$ 
have the form $\{(a_i,c_i)\} \cup \{(a_i',c_i')\}$, where
\[
3+\frac{k_1}{n_1}=\frac {a_1}{c_1}  < \frac{a_2}{c_2} < \ldots 
\]
approaches $3 +\frac kn - \frac12 L$, where $(c_{i+1},c_i,n)$ is an ordered Markov triple, and $\frac{a_i}{c_i}$ is the fractional Markov label $\lambda_{\cF\cM}(H_i)$ on the central horoball $H_i$ from the triple, and
\[
\frac{k_2}{n_2}=\frac{a_1'}{c_1'} > \frac{a_2'}{c_2'} > \ldots
\]
approaches $\frac kn + \frac12 L$, where $(n,c_i',c_{i+1}')$ is an ordered Markov triple, with $\lambda_{\cF\cM}(H_i')=\frac{a_i'}{c_i'}$.
The solution minimizing $\frac ac$ subject to the constraint $\frac ac \ge \frac 32+ \frac kn$ is given by $(a_1,c_1)=(3n_1 + \frac {k n_1-n_2}n,n_1)$.
\end{proposition}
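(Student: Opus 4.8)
The plan is to push the correspondence, implicit in the paragraphs preceding the statement, between the order‑two elements \eqref{eq: matrix Weierstrass} and lifts of Weierstrass points lying on $\widetilde\gamma$: such an element, with fixed point $\frac{a+i}c$, normalizes $\langle g\rangle$ precisely when it conjugates $g$ to $g^{-1}$ (automatic, since an elliptic cannot commute with a hyperbolic), which puts $\frac{a+i}c$ on the axis $\widetilde\gamma$, and after eliminating $b$ from $a^2+bc+1=0$ the condition becomes \eqref{eq:quad form}. I would first record a factorization: the quadratic form on the left of \eqref{eq:quad form} has discriminant $9n^2-4=n^2L^2$, so it equals $n\bigl(a-c\,\widetilde\gamma(+\infty)\bigr)\bigl(a-c\,\widetilde\gamma(-\infty)\bigr)$ with $\widetilde\gamma(\pm\infty)=\tfrac{3\pm L}2+\tfrac kn$. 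Hence every solution of \eqref{eq:quad form} already satisfies $\widetilde\gamma(-\infty)<\tfrac ac<\widetilde\gamma(+\infty)$, an interval contained in $[\tfrac kn,3+\tfrac kn]$ since $L<3$; so the window constraint is automatic, the solutions are exactly the Weierstrass‑point lifts on $\widetilde\gamma$, and the quantities $\tfrac ac$ can accumulate only at $\widetilde\gamma(\pm\infty)$.

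Next I would analyze the permutation of the solutions induced by conjugation with $g$. Since $g$ drags every point of $\widetilde\gamma$ strictly toward its attracting endpoint $\widetilde\gamma(+\infty)$ it strictly increases $\Re$, and $g$ has exactly two orbits on the Weierstrass points of $\widetilde\gamma$, namely those of $\widetilde{w_1}$ and $\widetilde{w_2}$ from Proposition~\ref{prop: picture of lift}. Splitting the solution set according to the sign of $\tfrac ac-\Re(A_l)$, where $\Re(A_l)=\tfrac32+\tfrac kn$, then produces precisely two sequences: the solutions with $\tfrac ac>\Re(A_l)$, which interleave the forward $g$‑orbits of $\widetilde{w_1}$ and $\widetilde{w_2}$, listed $\tfrac{a_1}{c_1}<\tfrac{a_2}{c_2}<\cdots$ with $\tfrac{a_i}{c_i}\uparrow\widetilde\gamma(+\infty)$; and the solutions with $\tfrac ac<\Re(A_l)$, listed $\tfrac{a_1'}{c_1'}>\tfrac{a_2'}{c_2'}>\cdots$ with $\tfrac{a_i'}{c_i'}\downarrow\widetilde\gamma(-\infty)$.

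To identify the terms I would pass to ideal arcs. The Weierstrass‑point lift carrying $(a_i,c_i)$ lies on the vertical geodesic $\{\Re z=\tfrac{a_i}{c_i}\}$, which projects to a simple ideal arc; Proposition~\ref{prop: fractional markov}(iii) then identifies $\tfrac{a_i}{c_i}$ with the fractional Markov label $\lambda_{\cF\cM}(H_i)$ of a horoball $H_i$ up to translation by a multiple of $3$ (the elliptic involution $z\mapsto z+3$), and Proposition~\ref{prop: fractional markov}(i) identifies $c_i=\lambda_\cM(H_i)$. Two consecutive such vertical arcs, together with the vertical lift $\{\Re z=\tfrac kn\}$ of the arc disjoint from $\gamma$, are the three sides of an ideal triangulation of $\X$ (as in Proposition~\ref{prop: fractional markov}(iv)), so Proposition~\ref{prop: topology of triples} shows the three corresponding simple closed geodesics pairwise intersect once and their Markov numbers form a Markov triple containing $n$; thus $(c_{i+1},c_i,n)$ is an ordered Markov triple, and iterating yields the Vieta recursion $c_{i+1}=3n\,c_i-c_{i-1}$ with $c_0=n_2$, $c_1=n_1$, whose dominant root $\tfrac12 n(3+L)$ forces $c_i\to\infty$; the convergence $\tfrac{a_i}{c_i}\to\widetilde\gamma(+\infty)$ follows because the underlying Weierstrass points march monotonically toward that endpoint. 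The other family is symmetric, with $n_1,n_2$ interchanged and $\widetilde\gamma(+\infty)$ replaced by $\widetilde\gamma(-\infty)$. For the base of the first family, one checks that $\tfrac{a_1+i}{c_1}$ with $(a_1,c_1)=(3n_1+k_1,n_1)$ lies on $\widetilde\gamma$ — a short computation from the Markov relation $n_1^2+n^2+n_2^2=3n_1nn_2$ and the identity $nk_1=kn_1-n_2$, the latter being the $\lambda$‑length relation of Remark~\ref{rem: lambda length} extracted from \eqref{eq: fractional markov} — that this point projects to a Weierstrass point and has $\Re>\Re(A_l)$, and that its distance from $A_l$ is $\cosh^{-1}\!\bigl(\tfrac{n_1L}2\bigr)$, which by Theorem~\ref{thm: explicit} and Lemma~\ref{lem: absolute twist} equals $\tfrac12\tw_\X(\gamma)=d(A_l,\widetilde{w_1})$; hence it is $\widetilde{w_1}$, which is by construction the first Weierstrass point after $A_l$, so $(a_1,c_1)=\bigl(3n_1+\tfrac{kn_1-n_2}n,\,n_1\bigr)$ is the sought minimizer, and symmetrically $(a_1',c_1')=(k_2,n_2)$.

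I expect the main obstacle to be the combinatorial bookkeeping in the third paragraph: getting the handedness right (``left'' versus ``right,'' and the cyclic‑order reversal of Remark~\ref{rem: orientation switch}) so that the two geometric half‑orbits of Weierstrass points are indexed by the correct branches of the Markov tree, and verifying, ultimately by explicit manipulation of \eqref{eq:g} and \eqref{eq: matrix Weierstrass}, that conjugation by $g$ advances the data $(\tfrac{a_i}{c_i},c_i)$ exactly as the Vieta recursion dictates — concretely, that the lower‑left entry of the conjugate of the matrix \eqref{eq: matrix Weierstrass} by $g$ equals $c_{i+1}=3nc_i-c_{i-1}$. Once that is in place, the initial‑term formulas and the minimizer claim are bookkeeping corollaries.
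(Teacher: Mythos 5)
Your argument follows essentially the same line as the paper's own (very terse) proof: identify integral solutions of \eqref{eq:quad form} with Weierstrass lifts on $\widetilde\gamma$, exhibit the two initial solutions $(3n_1+k_1,n_1)$ and $(k_2,n_2)$, and observe that all remaining ones are $g$-translates — the paper then ``spares the reader the unnecessary details,'' which is precisely what you supply (the discriminant factorization making the window constraint automatic, the split of the solution set by $\langle g\rangle$-orbit, the identification with fractional Markov labels via Proposition~\ref{prop: fractional markov}, and the Vieta recursion $c_{i+1}=3nc_i-c_{i-1}$). Two remarks. First, where the paper pins down the base of the recursion by noting that its two exhibited Weierstrass lifts sit at hyperbolic distance $\ell$ along $\widetilde\gamma$, you instead match $d\bigl(A_l,\frac{a_1+i}{c_1}\bigr)=\cosh^{-1}\bigl(\frac{n_1L}2\bigr)$ against Theorem~\ref{thm: explicit} and Lemma~\ref{lem: absolute twist}; these are equivalent entry points, and yours has the merit of directly isolating $\widetilde{w_1}$. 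Second, you are right that the two accumulation values are the endpoints $\widetilde\gamma(\pm\infty)=\frac{3\pm L}2+\frac kn$; the expressions $3+\frac kn-\frac12 L$ and $\frac kn+\frac12 L$ appearing in the statement cannot be correct — for $(n_1,n,n_2)=(1,5,2)$, so $k=2$ and $L\approx 2.97$, they evaluate to roughly $1.91$ and $1.89$, whereas the first sequence increases from $3+\frac{k_1}{n_1}=3$ and the second decreases from $\frac{k_2}{n_2}=\frac12$ — so the statement contains a misprint that your computation corrects. The one step you flag as incomplete (verifying by conjugation of \eqref{eq: matrix Weierstrass} by \eqref{eq:g} that the $c_i$'s advance by the Vieta law) is exactly the ``unnecessary detail'' the paper declines to display; it does go through.
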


See Figures~\ref{pic:wlabels} and \ref{pic:ws}.

For the proof, note that the integral solutions to \eqref{eq:quad form} are given by $(a,c)$ so that $\tilde{\gamma}$ passes through a Weierstrass point with real part $\frac ac$. 
One can check that each of $(3n_1+k_1,n_1)$ and $(k_2,n_2)$ satisfy \eqref{eq:quad form}, which means that $3+\frac{k_1}{n_1}$ and $\frac{k_2}{n_2}$ are the real parts of vertical geodesics through Weierstrass points. 
It happens that the hyperbolic distance between the corresponding points is exactly $\ell$, the half-length of $\gamma$. Therefore all remaining Weierstrass points are obtained by application of powers of $g$ (as in \eqref{eq:g}) to $(3n_1+k_1,n_1)$ and $(k_2,n_2)$, and the Proposition can be deduced readily. We spare the reader the unnecessary details.

\begin{figure}
\begin{minipage}{.39\textwidth}
\centering
\begin{lpic}{wlabels(7cm)}
\LARGE
\lbl[]{15,50;$\frac{k_1}{n_1}$}
\lbl[]{100,50;$\frac{k_2}{n_2}$}
\lbl[]{58,26;$\frac kn$}
\lbl[]{22,13;$\frac{k_3}{n_3}$}
\lbl[]{94,13;$\frac{k_4}{n_4}$}
\end{lpic}
\vspace{1.cm}
\caption{Fractional Markov labels.}
\label{pic:wlabels}
\end{minipage}\hfill
\begin{minipage}{.6\textwidth}
\centering
\begin{lpic}{ws(9cm)}
\Large
\lbl[]{50,-8;$\frac kn$}
\lbl[]{102,-8;$\frac{k_2}{n_2}$}
\lbl[]{67,-8;$\frac{k_4}{n_4}$}
\lbl[]{166,-8;$3+\frac{k_1}{n_1}$}
\lbl[]{124,-8;$\frac32+\frac kn$}
\lbl[]{19,-8;$\frac{k_1}{n_1}$}
\lbl[]{35,-8;$\frac{k_3}{n_3}$}
\large
\lbl[]{40,-8;$\cdot$}
\lbl[]{42,-8;$\cdot$}
\lbl[]{44,-8;$\cdot$}
\lbl[]{62,-8;$\cdot$}
\lbl[]{60,-8;$\cdot$}
\lbl[]{58,-8;$\cdot$}
\LARGE
\lbl[]{80,65;$\widetilde{\gamma}$}
\end{lpic}
\vspace{.8cm}
\captionsetup{width=.8\linewidth}
\caption{Weierstrass points on $\widetilde{\gamma}$ can be determined from the fractional Markov labels.}
\label{pic:ws}
\end{minipage}
\end{figure}

Note that one can provide an alternative calculation of $\tw_\X(\gamma)$ using $\widetilde{w_1}$: By Proposition~\ref{prop: neighbors},
\[
\widetilde{w_1} = 3+ \frac kn - \frac{n_2}{n_1n} + \frac i{n_1}~.
\]

Now the distance $x$ (borrowing notation from \S\ref{sec:compute geom}) is given by the length of the arc of $\tilde{\gamma}$ that subtends the angle $\theta$, where
\[
\sin \theta = \frac{\Re(\widetilde{w_1}- A_l)}{\frac 12 L}
=\frac{ \left(3+\frac kn-\frac{n_2}{n_1 n}\right) - \left(\frac 32+\frac kn\right)}{\frac 12L} =\frac{\frac32 -\frac{n_2}{n_1n}}{\frac12 L}= \frac{3n_1 n-2n_2}{n_1nL}~.
\]
It is straightforward to check that the length of such an arc is given by 
\[
x= \tanh^{-1}\sin \theta =\frac 12 \log\left( \frac{n_1nL + 3n_1n-2n_2}{n_1nL -3n_1n+ 2n_2}\right)~,
\]
which the interested reader can check is equal to ~\eqref{eq: twist}.

\bigskip
\section{Pictures of $\tau_\cF$ and $\tau_\X$}

\begin{figure}[h]
\centering
\captionsetup{width=.8\linewidth}
\includegraphics[width=14cm]{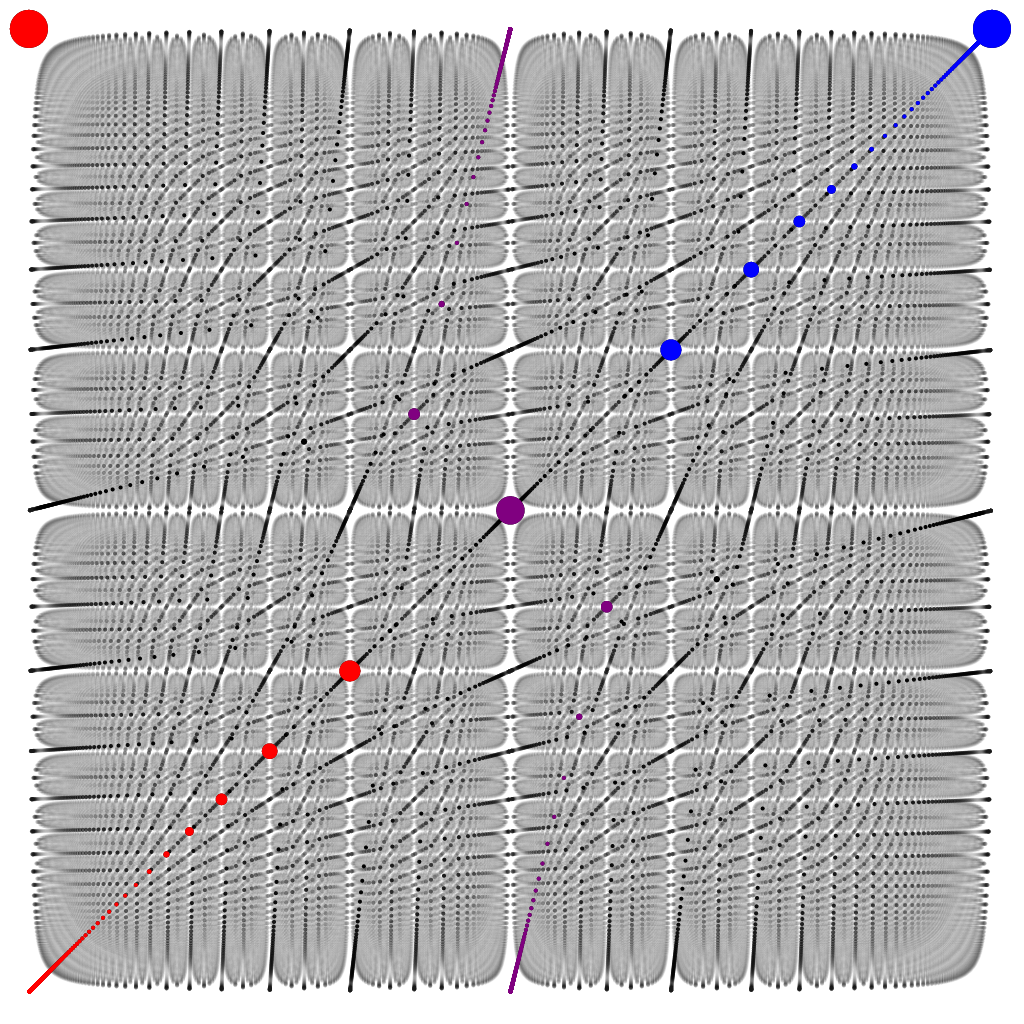}
\caption{The graph of $\tau_\cF$ on the $1216589$ 
points in $\bQ\cap[0,1]$ with denominators $\le 2000$. 
}
\label{pic: F twist graph}
\end{figure}

Images of the graphs of $\tau_\X$ and $\tau_\cF$, created with the aid of Mathematica \cite{Mathematica}, are depicted in Figures~\ref{pic: twist graph} and~\ref{pic: F twist graph}. 
In both graphs,
the point with $x$-coordinate $\frac pq$ is given a size and opacity that decays exponentially in the size of $q$;  
the larger red, blue, and purple points have $x$-coordinates $0$, $1$, and $\frac12$, respectively;
and points incident in the Farey graph to $\frac 01$ are red, to $\frac 11$ are blue, and to $\frac 12$ are purple.

\begin{problem}
\label{q: structure}
It appears that there is quite a bit of structure in the graphs of $\tau_\X$ and $\tau_\cF$. 
Formulate conjectures and explanations for these phenomena; 
e.g.~what can be said about the apparently `smooth' lines? what about the `gaps'? why do directions between $(0,0)$ and $(1,1)$ seem more prominent?
\end{problem}

\bigskip

\bibliographystyle{alpha}

\end{document}